\title[Graded decomposition numbers for blocks of small weight]{Graded decomposition numbers of Ariki-Koike algebras for blocks of small weight}
\author[S.~Lyle]{Sin\'ead Lyle}
\address{School of Mathematics, University of East Anglia, Norwich NR4 7TJ, UK.}
\email{s.lyle@uea.ac.uk}
\author[O.~Ruff]{Oliver Ruff}
\address{Department of Mathematics, Kent State University at Stark, 6000 Frank Avenue NW, North Canton, OH 44720, USA.}
\email{oruff@kent.edu}
\keywords{Cyclotomic Hecke algebras, quiver Hecke algebras, Specht modules, decomposition numbers}
\subjclass[2010]{20C08, 20C30}
\numberwithin{equation}{section}
\numberwithin{figure}{section}
\newtheorem{lemma}{Lemma}[section]
\newtheorem{theorem}[lemma]{Theorem}
\newtheorem{proposition}[lemma]{Proposition}
\newtheorem{corollary}[lemma]{Corollary}
\newtheorem*{theorem*}{Theorem}
\newtheorem*{proposition*}{Proposition}
\newtheorem*{lemma*}{Lemma}
\newtheorem*{definition*}{Definition}
\newtheorem*{caution*}{Caution}
\newtheorem{thm}{Theorem}
\theoremstyle{definition}
\theoremstyle{remark}
\newtheorem*{remark}{Remark}
\newtheorem{ex}{Example}
\newcommand\Comment[2][\relax]{\space\par\medskip\noindent%
   \fbox{\begin{minipage}{\textwidth}\textbf{Comment\ifx\relax#1\else---#1\fi}\newline%
        #2\end{minipage}}\medskip
}
\newcommand{\h}{\mathcal{H}_{n,r}}
\newcommand{\la}{\lambda}
\newcommand{\sym}{\mathfrak{S}}
\newcommand{\Z}{\mathbb{Z}}
\newcommand{\N}{\mathbb{N}}
\newcommand{\blam}{{\boldsymbol \lambda}}
\newcommand{\bmu}{{\boldsymbol \mu}}
\newcommand{\bsig}{{\boldsymbol \sigma}}
\newcommand{\btau}{{\boldsymbol \tau}}
\newcommand{\mc}{{\boldsymbol a}}
\newcommand{\gedom}{\trianglerighteq}
\newcommand{\gdom}{\vartriangleright}
\newcommand{\Parts}{\mathscr{P}^r_n}
\newcommand{\UParts}{\mathscr{P}^r}
\def\tab(#1){\,\mbox{\tiny$\young(#1)$}\,}
\newcommand{\M}{\mathfrak{M}}
\newcommand{\B}{\mathfrak{B}}
\newcommand{\pre}{<}
\newcommand{\elp}{\hspace*{-.12cm}...}
\newcommand{\tb}{\sim_{\text{b}}}
\newcommand{\rel}{%
  \mathrel{\vbox{\offinterlineskip\ialign{%
    \hfil##\hfil\cr
    $-$\cr
    \noalign{\kern-.35ex}
    $\leadsto$\cr
}}}}
\DeclareMathOperator{\Ab}{Ab}
\DeclareMathOperator{\wt}{wt}
\DeclareMathOperator{\Min}{Min}
\DeclareMathOperator{\Rep}{Rep}
\DeclareMathOperator{\res}{res}
\DeclareMathOperator{\cha}{char}
\DeclareMathOperator{\Pt}{Pt}
\DeclareMathOperator{\In}{Ind}
\newcommand{\Ind}{\In(e)}
\newcommand{\IndOne}{\In(e+1)}
\newcolumntype{C}{>{\centering\arraybackslash$}p{0.35cm}<{$}}
\newcolumntype{D}{>{\centering\arraybackslash$}p{0.05cm}<{$}}
\newcolumntype{E}{>{\centering\arraybackslash$}p{0.15cm}<{$}}
\newcolumntype{F}{>{\centering\arraybackslash$}p{0.8cm}<{$}}
\newif\iffancy
\definecolor{bead}{gray}{0.3}
\begin{document}

\begin{abstract}
We present some blocks of Ariki-Koike algebras $\h$ for which the decomposition matrices are independent of the characteristic of the underlying field.  
We complete the description of the graded decomposition numbers for blocks of Ariki-Koike algebras of weight at most two, which consists of analysing the indecomposable core blocks at level $r=3$, and give a closed formula for the decomposition numbers in this case.  
\end{abstract}

\maketitle 

\section{Introduction}
The Ariki-Koike algebras $\h$ arise naturally in multiple contexts.  Originally introduced by Ariki and Koike~\cite{ArikiKoike} as a simultaneous generalisation of the Hecke algebras of types $A$ and $B$, they also correspond to the Hecke algebras of the complex reflection groups of type $G(r,1,n)$~\cite{BroueMalle} and can be seen in the work of Cherednik~\cite{Cherednik}; furthermore, the symmetric group algebra $F\sym_n$ occurs as an example of such an algebra.  Recently, they have attracted attention due to their  relevance within the categorification program.  It was shown in the 1990s~\cite{Ariki,Grojnowski} that the finite dimensional $\h$-modules for all $n \geq 0$ categorify the irreducible highest weight module $V(\Lambda)$ of a certain Kac-Moody algebra $\mathfrak{g}$.  More recently, a result of Brundan and Kleshchev~\cite{BK:Blocks} showed that the Ariki-Koike algebras are isomorphic to certain $\Z$-graded algebras introduced independently by Khovanov and Lauda~\cite{KhovLaud:diagI,KhovLaud:diagII} and Rouquier~\cite{Rouquier}.  This defines a grading on $\h$ and gives rise to a categorification of $V(\Lambda)$ over the quantized enveloping algebra $U_v(\mathfrak{g})$.    

One important open problem in the study of the Ariki-Koike algebras is to determine the decomposition matrices, that is, find the composition factors of the Specht modules $S^\blam$.  It was shown by Brundan, Kleshchev and Wang~\cite{BKW:GradedSpecht} that the Specht modules are graded, and so we define graded decomposition numbers.  Over  fields of characteristic 0, these numbers are the polynomials arising from the LLT conjecture~\cite{LLT} and so can, in principle, be computed.  In practice, this computation is not possible, except for small values of $n$ and some specific cases where there exist closed formulae.  

Graded decomposition matrices for Ariki-Koike algebras defined over a field of positive characteristic are related to those over a field of characteristic 0 by (graded) adjustment matrices whose entries come from $\N[v^{-1}+v]$;  in particular, the graded decomposition numbers in characteristic 0 give a lower bound for those in arbitrary characteristic~\cite{BK:Blocks}.  James' conjecture~\cite{James:Ten}, for Hecke algebras of type $A$, was that when the characteristic of the field is less than the weight of the block, the adjustment matrix is the identity matrix.  However, as this paper was being written, Williamson announced  counterexamples to  Lusztig's conjecture and, consequently, to James' conjecture~\cite{Williamson2}. 

The question of when an adjustment matrix is the identity matrix is now wide open, for all $r \geq 1$, and it remains to be seen what role the weight of the block plays.  
If $r=2$ and the quantum characteristic $e$ satisfies $e=0$ or $e>n$, the decomposition numbers are independent of the characteristic of the field~\cite{BrundanStroppel},~\cite[Appendix~B]{HuMathas:QuiverSchurI}, regardless of the weight of the block.  
 In this paper, we present certain other blocks, when $r=3$ in which the decomposition numbers are also independent of the characteristic of the field.  
We first complete the description of the decomposition matrices for blocks of weight at most two.  
The blocks that we need to consider are indecomposable core blocks, with $r=3$.   (The definition of an indecomposable core block is given in Section~\ref{Defns}, while the combinatorial definitions required below appear in Section~\ref{CombDefns}.)  
We have the following simple description of the decomposition numbers for these blocks.  If $\bmu$ and $\blam$ are multipartitions in the same block, write $\bmu \leadsto \blam$ if $\blam$ is formed from $\bmu$ by removing a single rim hook from component $k$ and attaching it to component $k+1$, where $k \in \{1,2,\ldots,r-1\}$.  Write $\bmu \rel \blam$ if $\bmu \leadsto \blam$ and the leg lengths of these two hooks are equal.  If $\blam$ is a Kleshchev multipartition, let $\widetilde{\blam}=(\blam^{\diamond})'$ denote the conjugate of the image of $\blam$ under the generalisation of the Mullineux involution.  

\begin{thm} \label{MainTheorem2}
Suppose that $\blam$ and $\bmu$ are $3$-multipartitions in an indecomposable core block of weight 2 and that $\blam$ is a Kleshchev multipartition.  Then
\[d_{\bmu\blam}(v)=\begin{cases}
1, & \bmu = \blam, \\
v, & \widetilde{\blam} \gdom \bmu \gdom \blam \text{ and } (\bmu \rel \blam \text{ or } \widetilde{\blam} \rel \bmu), \\
v^2, & \bmu = \widetilde{\blam}, \\
0, & \text{otherwise}.  
\end{cases}\]
\end{thm}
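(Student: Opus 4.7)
The plan is to establish the theorem in two stages: first verify the formula over a field of characteristic zero by explicit computation in the Fock space, and then lift the result to arbitrary characteristic using the graded adjustment matrix formalism.

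First I would set up a concrete combinatorial model for the block. The multipartitions in an indecomposable core block of weight $2$ at level $r=3$ are determined by their triples of abacus displays on $e$ runners; parametrising them by the positions of the two ``extra'' beads relative to the underlying multicore $\bsig$, one can read off directly when $\bmu \leadsto \blam$, when the leg lengths agree (i.e.\ $\bmu \rel \blam$), and what the dominance order looks like. I would also identify the Kleshchev multipartitions via the good-node criterion and compute $\widetilde{\blam}$ in these coordinates, so that each of the four cases of the theorem becomes an explicit combinatorial condition on bead positions.

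For the characteristic-zero step, I would run the LLT algorithm to compute the canonical basis element $G(\blam)$ for each Kleshchev $\blam$ in the block. Because the block has weight $2$, only finitely many operators $\tlf_i$ (or their divided powers) need to be applied to $\bsig$, which keeps the recursion under control and allows a direct matching of coefficients. Expanding $G(\blam)=\sum_\bmu d_{\bmu\blam}(v)\bmu$ and using the combinatorial model from the previous step, I would check case by case that the only $\bmu$ with nonzero coefficient are $\blam$ itself (contributing $1$), those strictly between $\blam$ and $\widetilde{\blam}$ in dominance satisfying the $\rel$-condition (each contributing $v$), and $\widetilde{\blam}$ itself (contributing $v^2$).

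Finally, I would lift to positive characteristic using Brundan--Kleshchev's graded adjustment matrix: the entries lie in $\N[v+v^{-1}]$ and are unitriangular with respect to dominance. Since the maximum $v$-degree occurring in the characteristic-zero decomposition numbers here is $2$, a degree bound together with the Mullineux-symmetry of weight-$2$ blocks forces the adjustment matrix to be the identity, so the characteristic-zero values are the true graded decomposition numbers in every characteristic. The main obstacle will be the middle step: unwinding the LLT recursion carefully enough to identify exactly which $\bmu$ contribute with coefficient $v$, and in particular matching the level-$3$ generalised Mullineux image $\widetilde{\blam}$ with the corresponding abacus configuration is where I expect the bulk of the combinatorial work to lie.
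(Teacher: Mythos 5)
Your characteristic-zero step and the combinatorial set-up are sound and broadly parallel to the paper's (which parametrises the block by matrices $\alpha^u_k,\beta^u_l,\gamma^u_{kl}$ and reads off $\leadsto$, $\rel$, dominance and Kleshchevness from the abacus). The genuine gap is your final step. Writing $D_p=D_0A$ with $A$ unitriangular and entries in $\N[v+v^{-1}]$, positivity gives $d^{(p)}_{\btau\blam}(v)\geq a_{\btau\blam}(v)$ coefficientwise for any Kleshchev $\btau$, so a nontrivial adjustment entry would force a term of degree $\leq 0$ in some $d^{(p)}_{\btau\blam}(v)$ with $\btau\neq\blam$. But nothing you have invoked rules this out: in positive characteristic graded decomposition numbers genuinely can acquire constant or negative-degree terms (the paper quotes Evseev's example $d_{\mu\la}(v)=v^{-1}+v$ for $\mathbb{F}_2\sym_8$), and the only identities available from Mullineux symmetry ($d_{\widetilde{\blam}\blam}(v)=v^2$ and the support condition $\widetilde{\blam}\gedom\bmu\gedom\blam$, which do hold in all characteristics) kill the adjustment entries $a_{\btau\blam}$ only for those $\btau$ with $d^{(0)}_{\widetilde{\blam}\btau}\neq 0$, not for all $\btau$ strictly between $\blam$ and $\widetilde{\blam}$. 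The classical arguments of this flavour for weight-two blocks (Richards in type $A$, Fayers in type $B$) require additional Cartan-matrix input and characteristic restrictions, whereas the theorem here is claimed for every characteristic, so ``degree bound plus Mullineux symmetry'' cannot be the whole story.

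The paper sidesteps the adjustment matrix entirely, and this is the idea you are missing: rather than running the full LLT algorithm (which may require subtracting lower canonical basis elements and therefore only computes the characteristic-zero answer), it exhibits, for each Kleshchev $\blam$, an explicit starting multipartition $\bsig$ of weight $0$ and an induction sequence with $F^{(k_m)}_{i_m}\cdots F^{(k_1)}_{i_1}\bsig=\blam+\sum_{\bmu\neq\blam}a_{\bmu\blam}(v)\bmu$ and all $a_{\bmu\blam}(v)\in v\N[v]$, i.e.\ no straightening is ever needed (Proposition~\ref{iInd}, Proposition~\ref{NiceInd}, Corollary~\ref{PutTogether}). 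Since $P(\bsig)=\bsig$ is projective in every characteristic and induction preserves projectivity, the degree condition forces this element to be the character of the indecomposable projective cover of $D^\blam$ uniformly in all characteristics, giving Theorem~\ref{MainTheorem} and characteristic-independence in one stroke; Theorem~\ref{MainTheorem2} then follows by comparing that explicit answer with the $\rel$ conditions via Lemma~\ref{LegLength}. To repair your proof you would either need to replace the LLT computation by such subtraction-free induction sequences, or supply a genuine argument (Cartan matrix bounds or otherwise) that the adjustment matrix is trivial here.
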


This formula is close in spirit to the formulae given by Richards~\cite[Theorem~4.4]{Richards} for decomposition numbers of weight 2 blocks of Hecke algebras of type $A$ and Fayers~\cite[Theorem~3.18]{Fayers:WeightTwo} for certain weight 2 blocks of Hecke algebras of type $B$.  A faster way to compute the decomposition matrix for our blocks is given in Theorem~\ref{MainTheorem} where, for each Kleshchev multipartition $\blam$, we list the multipartitions $\bmu$ such that $D^\blam$ is a composition factor of $S^\bmu$.  In fact, the proof of Theorem~\ref{MainTheorem2} follows from case-by-case analysis of Theorem~\ref{MainTheorem}.  

For each multipartition in a core block, there is a natural way of associating a `weight graph' on $r$ vertices.  In the weight two case considered above, the graph is a line on three vertices.  A natural progression would be to consider core blocks for arbitrary values of $r$ in which the weight graphs of the corresponding multipartitions are trees; we hope to return to this work in a future paper.  

\section{Preliminaries}

\subsection{The Ariki-Koike algebras}
Let $\mathbb{F}$ be a field of arbitrary characteristic. Pick $r>0$ and $n \geq 0$. Pick $q \in \mathbb{F}$ with $ q \neq 0$, and let $e$ be minimal such that $1+q+\dots+q^{e-1}=0$, or $0$ if no such value exists.  Choose nonzero parameters $Q_1,\dots,Q_r \in \mathbb{F}$. The \emph{Ariki-Koike algebra} $\h$ is the $\mathbb{F}$-algebra with generators $T_0 , \dots, T_{n-1}$ and relations
$$\begin{array}{crcll}
& (T_i +q )(T_i -1) & = & 0 &  \text{ for } 1 \leq i \leq n-1, \\
& T_i T_j & = & T_j T_i & \text{ for } 1 \leq i,j \leq n-1, |i-j|>1, \\
& T_i T_{i+1} T_i & = & T_{i+1} T_i T_{i+1} & \text{ for } 1 \leq i \leq n-2,\\
& (T_0 - Q_1)\dots (T_0 - Q_r) & = & 0, & \\
& T_0 T_1 T_0 T_1 & = & T_1 T_0 T_1 T_0. &
\end{array}$$

We say two parameters $Q_s$ and $Q_t$ are $q$-connected if $Q_s = q^k Q_t$ for some $k \in \mathbb{Z}$.  A result of Dipper and Mathas~\cite{DM:Morita} states that each Ariki-Koike algebra~$\h$ is Morita equivalent to a direct sum of tensor products of smaller algebras whose parameters are all $q$-connected.  
In view of this result, we may assume that our parameters are $q$-connected, in fact, that they are powers of $q$.  If we set $I=\Z$ if $e=0$, and $I=\{0,1,\ldots,e-1\}$ otherwise then there exists a unique $\mc = (a_1,\ldots,a_r) \in I^r$ such that $Q_s = q^{a_s}$ for all $1 \leq s \leq r$.  We call $\mc$ the \emph{multicharge}.  

The algebras $\h$ are cellular algebras~\cite{GL,DJM:CellularBasis}, with the cell modules, also known as Specht modules, indexed by \emph{$r$-multipartitions of $n$}.  (We provide a definition of multipartitions and Kleshchev multipartitions in the next section.)  The simple modules arise as the heads of certain Specht modules which are indexed by a subset of multipartitions known as Kleshchev multipartitions.  This allows us to define the decomposition matrix of $\h$ to be the matrix recording the multiplicity of a simple module $D^\blam$ as a composition factor of a Specht module $S^\bmu$.  However, this is not the end of the story.  In~\cite{BK:Blocks}, Brundan and Kleshchev showed that the algebras $\h$ are isomorphic to certain $\Z$-graded algebras defined by Khovanov and Lauda~\cite{KhovLaud:diagI,KhovLaud:diagII} and Rouquier~\cite{Rouquier}.  The definition of these cyclotomic quiver Hecke algebras is not necessary for this paper; we refer the reader to the survey paper~\cite{Kleshchev:Survey} for more information.  We have, therefore, a grading on $\h$.  (By grading, we will always mean $\Z$-grading.)  Moreover, the Specht modules $S^\bmu$ admit a grading~\cite{BKW:GradedSpecht} and so we may talk about graded decomposition numbers.  For a graded algebra $A$, we let $\Rep(A)$ denote the category of finite-dimensional graded right $A$-modules.  Recall that if $M=\oplus_{d \in \Z} M_d \in \Rep(A)$, then for $k \in \Z$ we define $M\langle k \rangle$ to be the $A$-module isomorphic to $M$ but with grading shifted by $k$, that is, $M\langle k \rangle_d = M_{d-k}$. Given $M,L \in \Rep(A)$ with $L$ irreducible, we define the graded decomposition number 
\[[M:L]_v = \sum_{k \in \Z}[M:L\langle k \rangle]v^k\]
where $v$ is an indeterminate over $\Z$ and $[M:L\langle k \rangle]$ is the graded multiplicity of $L\langle k \rangle$ in $M$.  To see that this is well-defined, we refer the reader to~\cite{NastasescuOystaeyen} (but see also~\cite{Kleshchev:Survey}).    

Our aim is to compute certain graded decomposition numbers.  To this end, we must first introduce some combinatorics.  

\subsection{Partitions and abacus displays} \label{CombDefns}

A \emph{partition} $\la$ of $n$ is a weakly decreasing sequence of non-negative integers $(\la_1,\la_2,\dots)$, where $|\la| = \sum_{x=1}^\infty \la_x = n$. An \emph{$r$-multipartition} of $n$ (usually called a multipartition) is an $r$-tuple $\blam = (\la^{(1)},\la^{(2)},\dots,\la^{(r)})$ of partitions such that $|\la^{(1)}|  +|\la^{(2)}| + \ldots + | \la^{(r)}|  = n$.  If $\lambda$ is a partition, we define its conjugate to be the partition $\la'$ with $\la'_x = \max\{j \geq 1 \mid \lambda_j \geq i\}$ for all $x \geq 1$, and if $\blam = (\lambda^{(1)},\lambda^{(2)},\ldots,\lambda^{(r)})$ is a multipartition then we define its conjugate to be the multipartition 
$\blam' = (\lambda^{(r)'},\ldots,\lambda^{(2)'},\ldots,\lambda^{(1)'})$.
Let $\Parts$ denote the set of $r$-multipartitions of $n$ and and set $\UParts = \cup_{n \geq 0}\Parts$.  We define a partial order $\gedom$ on $\Parts$ by saying that $\bmu \gedom \blam$ if and only if
\[\sum_{s=1}^{t-1}|\mu^{(s)}| + \sum_{x=1}^z \mu^{(t)}_x \geq \sum_{s=1}^{t-1}|\lambda^{(s)}| + \sum_{x=1}^z \lambda^{(t)}_x \;\text{ for all } 1 \leq t \leq r \text{ and } z \geq 0,\]
and we say that $\bmu \gdom \blam$ if $\bmu \gedom \blam$ and $\bmu \neq \blam$.
 We make the usual identification of $\blam$ with its \emph{Young diagram}: the collection of nodes 
\[\{ (s,x,y) \mid 1 \leq s \leq r,  1 \leq y \leq \la^{(s)}_x\} \subset \{1,2,\ldots,r\} \times \Z_{>0} \times \Z_{>0}. \]
More generally, we refer to any $(s,x,y) \in \{1,2,\ldots,r\} \times \Z_{>0} \times \Z_{>0}$ as a node and say that $(s,x,y)$ is \emph{above} $(s',x',y')$ if and only if $s<s'$ or $s=s'$ and $x<x'$. 
A node is a \emph{removable}/\emph{addable} node of $\blam$ if removing it from / adding it to $\blam$ yields the Young diagram of another multipartition.  A \emph{rim hook} $\mathfrak{h}$ of $\blam$ is a connected set of nodes in a component of $\blam$ with the property that if $(s,x,y) \in \mathfrak{h}$ then $(s,x+1,y+1) \notin \mathfrak{h}$; if $|\mathfrak{h}|=h$, we call $\mathfrak{h}$ a $h$-rim hook.  Hence removing (the nodes in) a rim hook of $\blam$ yields the Young diagram of a multipartition.  We define the \emph{leg length} of the hook to be $|L|-1$, where $L=\{x \in\Z_{>0} \mid \text{ there exist } s,y \text{ such that } (s,x,y) \in \mathfrak{h}\}$; in other words, $|L|$ is the number of rows occupied by $\mathfrak{h}$.

Recall that we have fixed a multicharge $\mc=(a_1,a_2,\ldots,a_r) \in I^r$.  Define the \emph{residue} of a node $A=(s,x,y)$ by $\res(A)=a_s -x+y \mod e$, if $e>0$, and $\res(A)=a_s-x+y$ if $e=0$.  We refer to nodes of residue $i$ as \emph{$i$-nodes} and write write $c_i(\blam)$ for the number of $i$-nodes in $\blam$.  If $\blam$ is formed from $\bsig$ by adding $k>0$~nodes, all of residue $i$, then we write $\bsig \xrightarrow{i:k} \blam$ and set
\begin{multline*}
N_{i}(\bsig,\blam)=\sum_{\gamma \in \blam\setminus \bsig} 
\# \left\{ \gamma' \mid \text{$\gamma'$ an addable $i$-node of $\blam$ below $\gamma$}\right\} 
-  \# \left\{\gamma' \mid \text{$\gamma'$ a removable $i$-node of $\bsig$ below $\gamma$}\right\}.
\end{multline*}

A removable $i$-node $A$ of $\blam$ is \emph{normal} if whenever $B$ is an addable $i$-node below $A$ there are more removable $i$-nodes between $A$ and $B$ than addable $i$-nodes. The highest normal $i$-node in $\blam$ -- if such a thing exists -- is said to be \emph{good}.   A multipartition $\blam$ is called \emph{Kleshchev} if it is empty, or if another Kleshchev multipartition can be obtained from it by removing a good node (of any residue).  If $r=1$, Kleshchev is equivalent to $e$-restricted, that is, $\la_x - \la_{x+1} <e$ for all $x \geq 1$.  As previously mentioned, the Kleshchev multipartitions will index the irreducible $\h$-modules.  We define an involution $\blam \mapsto \blam^{\diamond}$ on the set of Kleshchev multipartitions as follows: Repeatedly remove good nodes from $\blam$ until the empty multipartition $\emptyset$ is reached.  Suppose the residues of those nodes were, in order of removal, $i_n,i_{n-1},\ldots,i_1$.  Form the multipartition $\blam^{\diamond}$ by adding good nodes of residue $j_1,j_2,\ldots,j_n$ in turn to $\emptyset$, where $j_s = -i_s$ if $e=0$ or $j_s =-i_s \mod e$ otherwise.  This is a generalisation of the Mullineux involution, originally defined when $r=1$~\cite{Mullineux}; see~\cite[\S 2]{Fayers:WeightII} for a discussion in terms of decomposition numbers and weight.  

It will be convenient to describe multipartitions in terms of abacus configurations.  Suppose that $\la$ is a partition and let $a \in I$.  For $j \geq 1$, set $\beta_j = \la_j-j+a $ and define 
\[\beta_a(\la)=\{\beta_j \mid j \geq 1\}\]
to be the set of $\beta$-numbers associated with the partition $\la$ and the charge $a$.  Take an abacus with runners indexed from left to right by the elements of $I$ with possible bead positions indexed by the elements of $\Z$ from top to bottom and left to right, such that for $i \in I$, the integers $k \equiv i \mod e$ appear on runner $i$.  The abacus configuration of $\la$ with respect to $a$ is then the abacus configuration with a bead at position $\beta_j$ for each $j \geq 1$.  If $e=0$, runners are indexed by the elements of $\Z$, with one bead appearing on runner $i$ for each $i \in \beta_a(\la)$.

\begin{ex}
Suppose that $e=4$.  Let $a=2$ and suppose that $\la=(10^2,5,3^3,2,1^3)$.  Then 
\[\beta_a(\la) = \{11,10,4,1,0,-1,-3,-5,-6,-7,-9,-10,\ldots\}\] and the abacus configuration has the form:

\iffancy
{\begin{center}
\begin{tikzpicture}\tikzset{yscale=0.6,xscale=0.6}
\foreach \k in {0,1,2,3} {
\shadedraw [draw=brown, bottom color=brown!90!black, top color=brown!70!white] (\k-.1,0)-- (\k+.1,0) -- (\k+0.1,6.5)--(\k-.1,6.5)--(\k-.1,0);
\filldraw[draw=brown, fill=brown!70!white](\k-.1,6.9)--(\k+.1,6.9)--(\k+.1,7.1)--(\k-.1,7.1)--(\k-.1,6.9);
\filldraw[draw=brown, fill=brown!65!white](\k-.1,7.2)--(\k+.1,7.2)--(\k+.1,7.4)--(\k-.1,7.4)--(\k-.1,7.2);
\filldraw[draw=brown, fill=brown!90!black](\k-.1,-.3)--(\k+.1,-.3)--(\k+.1,-.1)--(\k-.1,-.1)--(\k-.1,-.3);
\filldraw[draw=brown, fill=brown!95!black](\k-.1,-.6)--(\k+.1,-.6)--(\k+.1,-.4)--(\k-.1,-.4)--(\k-.1,-.6);
}
\foreach \k in {1.5,2.5,5.5,6.5} {
\shade[ball color=green] (0,\k) circle (10pt);}
\foreach \k in {2.5,3.5,4.5,5.5,6.5} {
\shade[ball color=green] (1,\k) circle (10pt);}
\foreach \k in {0.5,4.5,5.5,6.5} {
\shade[ball color=green] (2,\k) circle (10pt);}
\foreach \k in {0.5,3.5,4.5,5.5,6.5} {
\shade[ball color=green] (3,\k) circle (10pt);}
\end{tikzpicture}
\end{center}}
\else 
{
\medskip
\begin{center}
\begin{tikzpicture}\tikzset{yscale=0.6,xscale=0.6}
\foreach \k in {0,1,2,3} {
\fill [color=brown] (\k-.1,0)--(\k-.1,6.5) -- (\k+0.1,6.5)-- (\k+.1,0)--(\k-.1,0);
\fill[color=brown](\k-.1,6.9)--(\k+.1,6.9)--(\k+.1,7.1)--(\k-.1,7.1)--(\k-.1,6.9);
\fill[color=brown](\k-.1,7.2)--(\k+.1,7.2)--(\k+.1,7.4)--(\k-.1,7.4)--(\k-.1,7.2);
\fill[color=brown](\k-.1,-.1)--(\k+.1,-.1)--(\k+.1,-.3)--(\k-.1,-.3)--(\k-.1,-.1);
\fill[color=brown](\k-.1,-.4)--(\k+.1,-.4)--(\k+.1,-.6)--(\k-.1,-.6)--(\k-.1,-.4);
}
\foreach \k in {1.5,2.5,5.5,6.5} {
\fill[color=bead] (0,\k) circle (10pt);}
\foreach \k in {2.5,3.5,4.5,5.5,6.5} {
\fill[color=bead] (1,\k) circle (10pt);}
\foreach \k in {0.5,4.5,5.5,6.5} {
\fill[color=bead] (2,\k) circle (10pt);}
\foreach \k in {0.5,3.5,4.5,5.5,6.5} {
\fill[color=bead] (3,\k) circle (10pt);}
\end{tikzpicture}
\end{center}
}
\fi
\end{ex}

For a multipartition $\blam=(\la^{(1)},\la^{(2)},\ldots,\la^{(r)})$, the abacus configuration of $\blam$ is the $r$-tuple of abacuses in which abacus $s$ represents the $\beta$-numbers $\beta_{a_s}(\la^{(s)})$.  We write $\Ab(\blam)$ for the abacus configuration of $\blam$, where $\Ab(\blam)$ also determines the multicharge.  Note that increasing (resp.\! decreasing) a $\beta$-number by one in any of these sets corresponds to moving a bead across from a runner $i-1$ to runner $i$ (resp.\! from runner $i$ to runner $i-1$), considered modulo $e$, and that this corresponds to adding (resp.\! removing) a node of residue $i$ from the Young diagram of $\blam$.  Removing a $h$-rim hook from a multipartition corresponds to decreasing a $\beta$-number by $h$ in one of the sets $\beta_{a_s}(\la^{(s)})$, that is, moving a bead back $h$ spaces on the abacus, so that if $e>0$ then removing a $e$-rim hook corresponds to moving a bead up one position of the abacus.  If removing a $h$-rim hook corresponds to changing $b \in \beta_{a_s}(\la^{(s)})$ to $b-h$, then the leg length of the hook is equal to $|\{c \in \beta_{a_s}(\la^{(s)}) \mid b-h < c <h\}|$, that is the number of beads between the two positions.  

Defined thus, when $e \geq 2$ each runner of the abacus contains an infinite `sea' of beads that does not actually encode any information about the shape of the corresponding partition. If we consider a \emph{truncated} abacus configuration to be one with only finitely many beads on each runner, then we can associate it with a partition by filling in all the rows above its highest bead with other beads. Conversely, if we pick $N$ to be maximal so that $x \in \beta(\la)$ whenever $x < Ne$, we can define a canonical finite abacus configuration for $\la$ to be the one corresponding to the set $\beta_a (\la) \cap \{ Ne, Ne+1, \dots \}$. Considering this truncated abacus amounts to ignoring all the rows that do not affect the shape of the corresponding partition. 

Having set up our combinatorics, we may now return to the algebra $\h$.

\subsection{Specht modules, simple modules, and blocks} \label{Defns}
The Ariki-Koike algebras are cellular~\cite{GL,DJM:CellularBasis}, with cell modules, called \emph{Specht modules}, indexed by $r$-multipartitions of $n$, and denoted $S^\blam$ for $\blam \in \Parts$. By the general theory of cellular algebras, each $S^\blam$ comes equipped with an $\h$-invariant bilinear form, and every simple $\h$-module occurs as the quotient of some unique $S^\blam$ by the radical of this form. Write $D^\blam$ for the module obtained from $S^\blam$ in this way. 

\begin{theorem} [{\cite{Ariki:Kleshchevs}}]
Let $\blam \in \Parts$.  Then $D^\blam \neq 0$ if and only if $\blam$ is a Kleshchev multipartition. Hence, by the properties of cellular algebras,
\[\{D^{\blam} \mid \blam \text{ is a Kleshchev multipartition}\}\]
forms a complete set of pairwise non-isomorphic irreducible $\h$-modules.  
\end{theorem}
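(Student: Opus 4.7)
The plan is to combine the cellular algebra framework with the Ariki--Grojnowski categorification theorem. From the general theory of cellular algebras, $\{D^\blam : D^\blam \neq 0\}$ is already a complete irredundant list of pairwise non-isomorphic irreducible $\h$-modules, so the entire content of the theorem is to identify the non-vanishing locus with the set of Kleshchev multipartitions.

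First I would set up modular branching. Restriction along the inclusion $\mathcal{H}_{n-1,r} \hookrightarrow \h$ decomposes, after projecting onto residue components, into functors $e_i$ indexed by $i \in I$, with biadjoint induction functors $f_i$. Following Grojnowski and Vazirani, one verifies that $e_i$ and $f_i$ satisfy the Chevalley--Serre relations for the Kac--Moody algebra $\mathfrak{g}$ associated to $e$, and that the operators
\[\tilde{e}_i D^\blam = \operatorname{soc}(e_i D^\blam), \qquad \tilde{f}_i D^\blam = \operatorname{hd}(f_i D^\blam),\]
endow the set of (isomorphism classes of) non-zero $D^\blam$ with the structure of a Kashiwara crystal. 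The required nilpotency and $\mathfrak{sl}_2$-reduction come from analysing affine Hecke algebras of type $A$ via the cyclotomic quotient construction.

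Next I would identify this crystal combinatorially. The categorification theorem asserts that $\bigoplus_n [\Rep(\h)]$ is isomorphic, as a module over the quantum enveloping algebra of $\mathfrak{g}$, to the integrable highest weight module $V(\Lambda)$ with $\Lambda$ determined by the multicharge $\mc$, and that the classes of the simple modules correspond to a lower crystal basis. Consequently the crystal on irreducibles is isomorphic to $B(\Lambda)$. The description of $B(\Lambda)$ -- due to Misra--Miwa at level one and extended to arbitrary level by Jimbo--Misra--Miwa--Okado and others -- realises its vertex set as the graph obtained by iteratively adding good nodes from the empty multipartition, which is by definition the set of Kleshchev multipartitions.

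To transfer the resulting bijection to the labelling by $\blam$, I would induct on $n$. Triangularity of the cellular basis -- namely, $D^\blam$ is the head of $S^\blam$ and $[S^\bmu : D^\blam] \neq 0$ forces $\bmu \gedom \blam$ -- together with the branching rule that $\tlf_i$ corresponds combinatorially to attaching the good addable $i$-node on the abacus, pins down the unique labelling compatible with the operators $\tlf_i$: the vertex of $B(\Lambda)$ indexed by a Kleshchev $\blam$ must be $D^\blam$. The vanishing $D^\bmu = 0$ for non-Kleshchev $\bmu$ then follows by counting, since the Kleshchev multipartitions already exhaust a complete set of irreducibles in each degree. The principal obstacle is the categorification theorem itself: proving $\bigoplus_n [\Rep(\h)] \cong V(\Lambda)$ requires deep input about affine Hecke algebras of type $A$ and a compatibility between the Shapovalov form and the cellular data on $\h$; once this is granted, identifying the crystal vertices with Kleshchev multipartitions is comparatively straightforward.
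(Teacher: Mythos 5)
The paper does not prove this statement at all: it is quoted verbatim from Ariki's classification paper, so there is no internal argument to compare against. Your sketch is, in broad strokes, a faithful roadmap of how the cited result is actually established in the literature (Grojnowski, Grojnowski--Vazirani, Ariki): the branching functors $e_i,f_i$, the crystal operators $\tilde e_i,\tilde f_i$ on simples, the identification of the resulting abstract crystal with $B(\Lambda)$, and the triangularity argument that forces the crystal labelling to agree with the cellular labelling. The counting step at the end is also the standard way the ``only if'' direction is closed off.

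One point deserves correction, because the theorem as stated in the paper is over a field $\mathbb{F}$ of \emph{arbitrary} characteristic. You identify the crystal on simples with $B(\Lambda)$ by invoking the categorification theorem together with the assertion that the classes of the simple modules form a lower crystal (canonical-basis-type) basis of $V(\Lambda)$. That assertion is the characteristic-zero statement (Ariki's theorem in the LLT direction) and is precisely what fails in positive characteristic --- indeed the paper itself emphasises that adjustment matrices are nontrivial in general. The argument that works in all characteristics is Grojnowski--Vazirani's purely crystal-theoretic one: the operators $\tilde e_i,\tilde f_i$ make the set of isomorphism classes of simples into an abstract highest-weight crystal of highest weight $\Lambda$, and such a crystal is isomorphic to $B(\Lambda)$ by the uniqueness theorem for highest-weight crystals, with no appeal to canonical bases or to $[\bigoplus_n\Rep(\h)]\otimes\mathbb{C}\cong V(\Lambda)$ as a basis-preserving isomorphism. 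With that substitution your outline goes through; without it, the proof only covers $\cha(\mathbb{F})=0$, which is weaker than the statement being proved.
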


It is known that the Specht modules $S^\blam$ are graded~\cite{BKW:GradedSpecht} and therefore the (non-zero) quotient modules $D^\blam$ inherit this grading.  Given a multipartition $\bmu$ and a Kleshchev multipartition $
\blam$, define $d_{\bmu \blam}(v)$ to be the graded multiplicity of $D^\blam$ as a composition factor of $S^\bmu$.  An important open problem in the representation theory of $\h$ is to determine these composition multiplicities. Recall that $d_{\bmu\blam}(v) \in \N[v,v^{-1}]$, where we assume $0 \in \N$.  

\begin{theorem}[{\cite{GL,Ariki,BKW:GradedSpecht}}]
Suppose that $\bmu$ is a multipartition and $\blam$ a Kleshchev multipartition of $n$.  Then the following results hold.
\begin{enumerate}
\item We have $d_{\blam\blam}(v)=1$ and if $d_{\bmu\blam}(v) \neq 0$ then $\bmu \gedom \blam$.  
\item Suppose that $\mathbb{F}=\mathbb{C}$.  If $\bmu \neq \blam$ then $d_{\bmu\blam}(v) \in v\mathbb{N}[v]$.
\end{enumerate}
\end{theorem}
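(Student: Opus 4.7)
The theorem bundles three standard facts from separate sources, so my plan is to address the two parts individually.

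For part~(1), the key tool is the general machinery of cellular algebras due to Graham-Lehrer~\cite{GL}, applied to the Ariki-Koike setting in~\cite{DJM:CellularBasis}. The Specht modules $S^\blam$ are the cell modules with respect to a cellular basis indexed by pairs of standard tableaux, and the relevant poset on cell labels is precisely the dominance order $\gedom$ on $\Parts$. By the abstract cellular algebra theory, the simple head $D^\blam$ (when non-zero) appears in $S^\blam$ with multiplicity one, and any other composition factor $D^{\boldsymbol{\nu}}$ of $S^\blam$ satisfies $\blam \gdom \boldsymbol{\nu}$; more generally, $[S^\bmu : D^\blam] \neq 0$ forces $\bmu \gedom \blam$. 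In the graded setting of Brundan-Kleshchev-Wang~\cite{BKW:GradedSpecht}, the grading on $S^\blam$ is normalised so that the simple quotient $D^\blam$ sits in degree~$0$, which immediately gives $d_{\blam\blam}(v)=1$ as a polynomial in $\N[v,v^{-1}]$, not merely a non-trivial monomial.

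For part~(2), over $\mathbb{C}$ I would invoke Ariki's theorem~\cite{Ariki} resolving the LLT conjecture: the ungraded decomposition numbers coincide with the values at $v=1$ of the coefficients arising in the expansion of the standard monomial basis of the level-$r$ Fock space $V(\Lambda)$ in terms of the Kashiwara-Lusztig canonical basis. These coefficients are well-known to lie in $\N[v]$, with diagonal entries equal to $1$ and off-diagonal entries in $v\N[v]$, by the standard positivity properties of canonical bases. The graded lift of~\cite{BKW:GradedSpecht} upgrades Ariki's identification to the graded level: the polynomial $d_{\bmu\blam}(v)$ literally equals the corresponding canonical-basis transition coefficient, not merely its specialisation at $v=1$. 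Combining these facts gives $d_{\bmu\blam}(v)\in v\N[v]$ whenever $\bmu\neq\blam$, as required.

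The main obstacle, if one were writing out a complete argument, would be part~(2): the ungraded identification with the Fock space canonical basis depends on Ariki's substantial categorification theorem, and the graded matching additionally relies on the careful comparison of the Specht filtration grading in~\cite{BKW:GradedSpecht} with the grading coming from the canonical basis. By contrast, part~(1) is essentially formal once the cellular basis structure and its graded refinement are in place, and requires nothing beyond the axioms of a graded cellular algebra.
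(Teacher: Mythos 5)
The paper does not prove this theorem; it is quoted from the literature with the citations \cite{GL,Ariki,BKW:GradedSpecht}, so there is no in-paper argument to compare against. Your sketch is a correct account of the standard proofs: part~(1) is indeed formal from the (graded) cellular structure of \cite{GL,DJM:CellularBasis} with the dominance order as cell poset, and part~(2) rests on Ariki's categorification theorem together with the identification of graded decomposition numbers with canonical basis coefficients. The only imprecision is one of attribution: the graded matching with the Fock space canonical basis (and hence the statement $d_{\bmu\blam}(v)\in v\N[v]$ over $\mathbb{C}$) is the main theorem of \cite{BK:GradedDecomp} rather than of \cite{BKW:GradedSpecht}, which only constructs the graded Specht modules; this does not affect the correctness of your argument.
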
 

We note that if $\cha(\mathbb{F}) \neq 0$ then there exist graded decomposition numbers that do not lie in $\N[v]$.  The smallest known such example, due to Evseev~\cite{Evseev}, is when $r=1, n=8$ and $e=p=2$, so for the symmetric group algebra $\mathbb{F}_{2}\sym_8$.  Setting $\la=(1^8)$ and $\mu=(3,2^2,1)$ we have $d_{\mu\la}(v)=v^{-1}+v$.

One way of computing decomposition numbers is by using $i$-induction.  We note that Proposition~\ref{iInd} below, while sufficient for our purposes, is only an application of some far more general theory.  The reader who is unfamiliar with  the generalised LLT Conjecture or Ariki's theorem is advised to either take Proposition~\ref{iInd} on trust or to consult~\cite[\S 12.1]{ArikiBook} for the notation used below. 

Assume $v$ to be an indeterminant over $\mathbb{C}$ and define the Fock space $\mathcal{F}(\mc)$ to be the $\mathbb{C}(v)$-vector space with basis $\UParts$.  Then $\mathcal{F}(\mc)$ becomes a module for the quantized enveloping algebra $U_v(\mathfrak{g})$ where $\mathfrak{g}=
\widehat{\mathfrak{sl}}_e(\mathbb{C})$ if $e>0$ and 
$\mathfrak{g}=\mathfrak{sl}_{\infty}(\mathbb{C})$ if $e=0$.  More details of this action can be found in (for example)~\cite[\S 10.2]{ArikiBook}.  In particular, the empty multipartition $\emptyset$ generates a certain highest weight module $V(\Lambda)$.  
We describe the action on $\mathcal{F}(\mc)$ of the divided powers of the Chevalley generators $F_i$, where $i \in \Z/e\Z$.  If $\bsig \in \UParts$ then for $k>0$, we have
\[F^{(k)}_i \bsig = \sum_{\bsig \xrightarrow{i:k} \blam} v^{N_i(\bsig,\blam)}\blam.\]  
For a Kleshchev multipartition $\blam \in \Parts$, define $P(\blam) = \sum_{\bmu \in \Parts}d_{\bmu\blam}(v) \bmu$. 

\begin{proposition} \label{iInd}
Suppose that $P(\bsig)=\bsig$ and that there exist $i_1,i_2,\ldots,i_m \in \Z/e\Z$ and $k_1,k_2,\ldots,k_m >0$ such that
\[F_{i_m}^{(k_m)}\ldots F_{i_2}^{(k_2)}F_{i_1}^{(k_1)}\bsig = \blam + \sum_{\bmu \neq \blam} a_{\bmu\blam}(v)\bmu\]
where $a_{\bmu\blam}(v) \in v \N[v]$ for all $\bmu$.  Then $\blam$ is a Kleshchev multipartition and $d_{\bmu\blam}(v) = a_{\bmu\blam}(v)$ for all $\bmu \neq \blam$; or equivalently, 
\[P(\blam) =  F^{(k_m)}_{i_m}\ldots F^{(k_2)}_{i_2}F^{(k_1)}_{i_1}\bsig.\]   
\end{proposition}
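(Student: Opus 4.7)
The plan is to interpret both sides of the claimed identity in the graded Grothendieck group of Ariki--Koike algebras, which via the Ariki--Brundan--Kleshchev categorification is identified with the Fock space $\mathcal{F}(\mc)$, and then to extract the individual decomposition numbers by combining positivity, bar-invariance, and an induction on the dominance order. Under this identification each basis vector $\bmu$ corresponds to the class $[S^\bmu]$, each divided power $F_i^{(k)}$ is induced by a divided $i$-induction functor that is exact and preserves graded projective modules, and Brauer reciprocity identifies the class of the graded projective cover $P^\blam$ of $D^\blam$ with $P(\blam)$. The hypothesis $P(\bsig)=\bsig$ therefore says that $\bsig=[P^\bsig]$ is the class of a graded projective; applying the functors gives $F_{i_m}^{(k_m)}\cdots F_{i_1}^{(k_1)}\bsig=[Q]$ for a graded projective module $Q$, and this class expands uniquely as
\[F_{i_m}^{(k_m)}\cdots F_{i_1}^{(k_1)}\bsig=\sum_{\blam'\text{ Kleshchev}}c_{\blam'}(v)\,P(\blam')\]
with $c_{\blam'}(v)\in\N[v,v^{-1}]$. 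Using self-duality of the graded PIMs together with the bar-invariance of $\bsig$ and of each $F_i^{(k)}$, each $c_{\blam'}(v)$ is itself bar-invariant.

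I would then match this projective expansion against the given expression $\blam+\sum_{\bmu\neq\blam}a_{\bmu\blam}(v)\bmu$ by working up the dominance order, exploiting the triangular shape $P(\blam'')=\blam''+\sum_{\bmu\gdom\blam''}d_{\bmu\blam''}(v)\bmu$. Choosing $\blam^*$ to be $\gedom$-minimal among the Kleshchev multipartitions with $c_{\blam^*}(v)\neq 0$, the only $P(\blam'')$ in the expansion that can contribute a $\blam^*$-term is $P(\blam^*)$ itself, so $c_{\blam^*}(v)$ equals the coefficient of $\blam^*$ on the right-hand side: this is $1$ if $\blam^*=\blam$ and otherwise lies in $v\N[v]$. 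Because a bar-invariant element of $v\N[v]$ must vanish, the minimal support element is $\blam$, so in particular $\blam$ is Kleshchev and $c_\blam(v)=1$. Proceeding inductively upward, at each Kleshchev $\blam^*\gdom\blam$ the coefficient comparison yields $c_{\blam^*}(v)=a_{\blam^*\blam}(v)-d_{\blam^*\blam}(v)\in v\Z[v]$, which is bar-invariant and hence zero; at each non-Kleshchev $\blam^*$ the same comparison reads $d_{\blam^*\blam}(v)=a_{\blam^*\blam}(v)$, completing the identification.

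The main obstacle is assembling the categorification inputs: that divided $i$-induction categorifies $F_i^{(k)}$ in the graded setting and preserves projectivity, that the graded PIMs are self-dual so that each $[P^{\blam'}]$ is bar-invariant in $\mathcal{F}(\mc)$, and that the off-diagonal decomposition numbers $d_{\bmu\blam}(v)$ used in the inductive step really lie in $v\N[v]$. The first two are standard consequences of the Brundan--Kleshchev isomorphism together with the BKW graded Specht construction, while the third is immediate in characteristic zero from the positivity theorem cited in the excerpt and reduces to that case in general via the non-negativity of the entries of the graded adjustment matrix.
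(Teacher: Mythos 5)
Your write-up is far more self-contained than the paper's own proof, which disposes of this proposition in three sentences by citing Brundan--Kleshchev: over $\mathbb{C}$ the output of the induction procedure is the canonical basis element, i.e.\ $P(\blam)$, by the proved LLT conjecture, and positive characteristic is then handled by the graded adjustment matrix. What you have written out is essentially the argument that lies behind that citation --- identification of $\bsig$ with a projective class, exactness and preservation of projectivity by divided $i$-induction, bar-invariance of the classes $P(\blam')$, and triangular coefficient extraction along the dominance order --- and in characteristic zero it goes through as you describe.

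The gap is in your treatment of positive characteristic. You assert that the off-diagonal $d_{\bmu\blam}(v)$ lie in $v\N[v]$ in general, ``via the non-negativity of the entries of the graded adjustment matrix.'' This is false, and the paper itself records the counterexample: for $\mathbb{F}_2\sym_8$ with $\la=(1^8)$ and $\mu=(3,2^2,1)$ one has $d_{\mu\la}(v)=v^{-1}+v$. Writing the decomposition matrix over $\mathbb{F}$ as the characteristic-zero matrix times a unitriangular adjustment matrix with entries in $\N[v+v^{-1}]$, the off-diagonal adjustment entries inject negative powers of $v$ directly into the modular decomposition numbers, so your inductive step ``$c_{\blam^*}(v)=a_{\blam^*\blam}(v)-d_{\blam^*\blam}(v)\in v\Z[v]$, bar-invariant, hence zero'' is not justified as stated. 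Two repairs are available. First, run the bar-invariance induction only over $\mathbb{C}$, obtaining $F^{(k_m)}_{i_m}\cdots F^{(k_1)}_{i_1}\bsig=P^{\mathbb{C}}(\blam)$; then over $\mathbb{F}$ the same functors applied to the projective $S^\bsig$ yield a projective $Q$ with $[Q]=P^{\mathbb{C}}(\blam)$ and also $[Q]=\sum_{\blam'}c'_{\blam'}(v)P^{\mathbb{F}}(\blam')$ with $c'_{\blam'}(v)\in\N[v,v^{-1}]$ and $c'_{\blam}(v)=1$ by unitriangularity; the two differences $P^{\mathbb{C}}(\blam)-P^{\mathbb{F}}(\blam)$ and $P^{\mathbb{F}}(\blam)-P^{\mathbb{C}}(\blam)$ are then both coefficientwise non-negative, hence zero. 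Alternatively, keep the argument over $\mathbb{F}$ but use positivity before bar-invariance: from $c_{\blam^*}(v)=a_{\blam^*\blam}(v)-d_{\blam^*\blam}(v)\in\N[v,v^{-1}]$ with $a_{\blam^*\blam}(v)\in v\N[v]$ one first deduces that $d_{\blam^*\blam}(v)$ has no negative powers, then bar-invariance of $c_{\blam^*}(v)$ kills its positive part, and positivity of both $c_{\blam^*}(v)$ and the constant term of $d_{\blam^*\blam}(v)$ forces $c_{\blam^*}(v)=0$. Either way the statement survives, but the step you flagged as ``immediate'' is exactly where the care is needed.
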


\begin{proof}
Suppose that $\mathbb{F}=\mathbb{C}$.  Then $F^{(k_m)}_{i_m}\ldots F^{(k_2)}_{i_2}F^{(k_1)}_{i_1}\bsig$ is a canonical basis element of the highest weight module $V(\Lambda)$ and so by Ariki's theorem~\cite[Theorem~4.4]{Ariki} (see also~\cite[Theorem~5.9]{BK:GradedDecomp}) we have that $\blam$ is a Kleshchev multipartition, that $\bmu \gdom \blam$ for all $\bmu$ such that $a_{\bmu\blam}(v) \neq 0$ and that the coefficients $a_{\bmu\blam}(v)$ give the ungraded decomposition numbers  $d_{\bmu\blam}(v)$ for $\blam \neq \bmu$ when evaluated at $v=1$.  By~\cite[Corollary 5.15]{BK:GradedDecomp}, we have the graded analogue, that is that $d_{\bmu\blam}(v) = a_{\bmu\blam}(v)$ for all $\bmu \neq \blam$.  

Now suppose that $\mathbb{F}$ has positive characteristic.  Noting that $\bmu \gdom \blam$ for all $\bmu$ such that $a_{\bmu\blam}(v) \neq 0$, Ariki's theorem shows that 
\[F^{(k_m)}_{i_m}\ldots F^{(k_2)}_{i_2}F^{(k_1)}_{i_1}P(\bsig) = P(\blam) + \sum_{\btau \neq \blam} a_\btau(v) P(\btau),\]
where the sum is over Kleshchev multipartitions $\btau \gdom \blam$ and where $a_\btau(v) \in v \mathbb{N}[v]$.  
Conversely, since in characteristic 0 we have $[S^\bmu:D^\blam]=a_{\bmu\blam}(v)$ for $\bmu \neq \blam$, the theory of graded adjustment matrices~\cite[\S 10.3]{Kleshchev:Survey} dictates that
\[P(\blam) = \blam + \sum_{\bmu \neq \blam} (a_{\bmu\blam}(v)+b_{\bmu}(v))\bmu\]
where $b_\bmu(v) \in \mathbb{N}[v,v^{-1}]$.  Combining the last two equations, we see that
\[P(\blam) = F_{i_m}^{(k_m)}\ldots F_{i_2}^{(k_2)}F_{i_1}^{(k_1)}P(\bsig) = \blam + \sum_{\bmu \neq \blam} a_{\bmu\blam}(v)\bmu\]
as required.  
\end{proof}

Since $\h$ is cellular, all composition factors of a Specht modules lie in the same block, so to classify the blocks of the $\h$, it is sufficient to decide when two Specht modules lie in the same block. 

\begin{proposition}[{\cite{LM:Blocks}}] \label{Block}
Two Specht modules $S^\blam$ and $S^\bmu$ lie in the same block if and only if $c_i (\blam) = c_i (\bmu)$ for all $i \in \Z/e\Z$. 
\end{proposition}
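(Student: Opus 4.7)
The plan is to prove both directions separately, with the forward direction following from the graded (KLR) structure on $\h$ and the converse requiring combinatorial work on abacus displays.

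For the forward direction, I would use the cyclotomic KLR presentation of $\h$. In that presentation, the identity decomposes as a sum of pairwise orthogonal idempotents $e(\bi)$ indexed by residue sequences $\bi \in I^n$. Grouping these by residue content gives central idempotents $e_\alpha = \sum_\bi e(\bi)$, where the sum is over all $\bi$ with $\alpha = \sum_{k=1}^n \alpha_{i_k}$ (an element of the positive root lattice of $\mathfrak{g}$). The graded Specht module $S^\blam$, by~\cite{BKW:GradedSpecht}, has a homogeneous basis indexed by standard tableaux of shape $\blam$, and each $e(\bi)$ acts as zero unless $\bi$ is the residue sequence of some standard $\blam$-tableau; in particular all such sequences have the same multi-set of residues, namely $(c_i(\blam))_{i \in I}$. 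Hence $S^\blam$ is annihilated by every $e_\alpha$ with $\alpha \ne \alpha(\blam) := \sum_i c_i(\blam)\alpha_i$, and the same holds for every composition factor of $S^\blam$. Thus $\blam$ and $\bmu$ in the same block forces $\alpha(\blam) = \alpha(\bmu)$, i.e. $c_i(\blam) = c_i(\bmu)$ for all $i$.

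For the converse, the plan is to use abacus combinatorics. Translating residue content to abacus data: the number of $i$-nodes of $\blam$ is determined (up to a constant depending only on $\mc$) by the bead positions on the runners of the abacus display of $\blam$, so the condition $c_i(\blam) = c_i(\bmu)$ for all $i$ is equivalent to saying that the abacus display of $\bmu$ can be obtained from that of $\blam$ by a sequence of swaps of beads on different runners within a single component, together with redistributions of beads between components that preserve total residues. I would show that any such abacus move can be realised as an elementary step connecting $\blam$ to some $\blam'$ with $S^\blam$ and $S^{\blam'}$ sharing a composition factor in their Jantzen-style filtrations. The core ingredient is: for a suitably chosen Kleshchev multipartition $\btau$ along the way, the module $D^\btau$ appears in $S^\blam$ and again in $S^{\blam'}$, forcing them into the same block. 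One can verify this either by using $i$-induction (Proposition~\ref{iInd}) to track the relevant LLT polynomials, or by invoking the known behaviour of Specht modules under single node induction and restriction, which links Specht modules differing by a removable/addable $i$-pair.

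The main obstacle will be the converse direction: one needs a combinatorial algorithm guaranteeing that any two abacus configurations with the same runner-populations are connected by a chain of ``block-preserving'' moves, and for each elementary move one must exhibit an explicit common composition factor. An induction on $|\blam \setminus (\blam \cap \bmu)|$, or on the total displacement needed on the abacus, seems the natural framework, with a base case handled by a direct computation using $i$-induction on a small, explicit pair. Once the elementary step is in hand, concatenating the moves yields the block-equivalence of $S^\blam$ and $S^\bmu$, completing the proof.
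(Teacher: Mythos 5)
This proposition is not proved in the paper at all: it is quoted from \cite{LM:Blocks}, so there is no internal proof to compare against. Judged on its own terms, your proposal establishes only the easy direction. The argument via the KLR idempotents $e(\bi)$ and the central idempotents $e_\alpha$ is correct and standard: since the residue sequences of standard $\blam$-tableaux all have content $(c_i(\blam))_i$, the module $S^\blam$ lies in the subcategory cut out by a single $e_\alpha$, and blocks refine this decomposition. (Historically \cite{LM:Blocks} predates the KLR isomorphism and obtains this direction from Jucys--Murphy elements, but your route is legitimate.)

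The converse, which is the actual content of the theorem, is left as a plan rather than a proof, and the plan conceals the entire difficulty. You propose to connect any two abacus configurations with equal residue content by ``elementary moves'' each of which visibly produces a common composition factor, but you neither specify the moves nor prove that any single move forces a shared composition factor; asserting that ``$D^\btau$ appears in $S^\blam$ and again in $S^{\blam'}$'' for a suitably chosen $\btau$ is precisely what has to be shown, and there is no a priori reason such a $\btau$ exists for a given pair. Note also that Proposition~\ref{iInd} of this paper cannot be used the way you suggest: it only applies starting from a multipartition $\bsig$ with $P(\bsig)=\bsig$, i.e.\ weight zero, so it does not link arbitrary Specht modules. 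The actual proof in \cite{LM:Blocks} requires the Jantzen sum formula together with a delicate reduction to multicores and a separate analysis of the combinatorial equivalence classes; none of that machinery appears in your sketch, so the hard direction remains a genuine gap.
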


If $r=1$, then an alternative description of the blocks can be given as follows.  If $e=0$, the algebra is semisimple, so assume $e>0$.  Define the \emph{weight} (more precisely, $e$-weight) of a partition $\la$ to be the number of rim $e$-hooks that can be removed from it in succession.  The partition left afterwards is called the \emph{core} of $\la$, and hence partitions from which no $e$-rim hooks can be removed are called \emph{cores}.  Since removing an $e$-rim hook corresponds to moving a moving a bead up one position on the abacus, a core corresponds to an abacus configuration in which no bead can be moved any higher on its runner.  Then two Specht modules $S^\la$ and $S^\mu$ lie in the same block if and only if $\la$ and $\mu$ have the same core and the same weight.  

We now want to generalise these notions to $r \geq 1$.  Say that a multipartition is a multicore if $e=0$ or if $e \geq 2$ and the multipartition does not have any removable $e$-rim hooks.  Note that if $S^\blam$ and $S^\bmu$ lie in the same block, it is possible to have $\blam$ be a multicore, but $\bmu$ not.  We define a \emph{core block} to be a block in which all the Specht modules correspond to multicores; note that if $e=0$ then every block is a core block.  
The following definition is due to Fayers~\cite{Fayers:Weights}. We define the \emph{weight} of a multipartition $\blam$ by
\[\wt(\blam) = \sum_{s=1}^r c_{a_s}(\blam) - \frac{1}{2}\sum_{i \in G^{(k)}_{i+1} e\Z} \Big(c_i(\blam)-c_{i+1}(\blam)\Big)^2.\]
It follows from Proposition~\ref{Block} that if $S^\blam$ and $S^\bmu$ lie in the same block then $\wt(\blam)=\wt(\bmu)$ and so we will talk about the weight of a block.  This definition of `weight' agrees with the definition of the `defect' of the corresponding block of the cyclotomic quiver Hecke algebra~\cite[Eqn.~(3.10)]{Kleshchev:Survey}.  

Fayers' ingenious definition of the weight of a multipartition is not so easy to read off the abacus; in particular, it is not generally true that multicores have weight zero.  In the next section, we discuss how to find the weight of a multipartition in a core block given its abacus.  The following facts about the weight function are proved in \cite{Fayers:Weights}. 

\begin{proposition}\label{weightfacts} Let $\blam \in \Parts$. 
\begin{enumerate} 
\item We have $\wt(\blam) \geq 0$ and $\wt(\blam)=0$ if and only if $P(\blam)=\blam$. 
\item If $r=1$, $\wt(\blam)$ agrees with the usual definition of weight for partitions.
\item Let $\bmu$ be obtained from $\blam$ by removing a rim $e$-hook from any component -- equivalently, by moving a bead one space up in any component -- of $\blam$. Then $\wt(\bmu) = \wt(\blam) - r$.
\item If $\blam$ is a multicore then 
$$ \wt(\blam) = \sum_{1 \leq s < t \leq r} \wt( \la^{(s)},\la^{(t)} ).$$ 
\end{enumerate}
\end{proposition}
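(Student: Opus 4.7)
The plan is to establish the four parts in the order (3), (2), (4), (1), with each building on its predecessors.

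Part (3) is a direct calculation. The crucial observation is that a rim $e$-hook contains exactly one node of each residue class, since consecutive rim nodes have residues differing by $-1$ modulo $e$. Hence if $\bmu$ is obtained from $\blam$ by removing such a hook, then $c_i(\bmu)=c_i(\blam)-1$ for every $i \in \Z/e\Z$, so the squared differences $(c_i(\blam)-c_{i+1}(\blam))^2$ are preserved while $\sum_s c_{a_s}(\blam)$ drops by exactly $r$, giving the claim directly from the defining formula.

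For part (2), I would use (3) to reduce to the case of a core $\la$, in which $\wt(\la)$ should vanish. Reading the $c_i(\la)$ off the abacus with charge $a_1$ and exploiting the fact that no bead is moveable up on its runner, a direct algebraic identity shows $c_{a_1}(\la)=\tfrac{1}{2}\sum_i (c_i(\la)-c_{i+1}(\la))^2$. Part (3) then extends this to arbitrary $\la$, since removing rim $e$-hooks (moving beads up) changes the weight by exactly $1$ each time, matching the classical definition. For part (4), assume $\blam$ is a multicore and expand $\bigl(c_i(\blam)-c_{i+1}(\blam)\bigr)^2$ as the square of a sum over components. The diagonal squares combine with each $c_{a_s}(\la^{(s)})$ to contribute $\wt(\la^{(s)})=0$ by (2) (each $\la^{(s)}$ being a core), while the remaining cross terms regroup precisely into the pairwise weights $\wt(\la^{(s)},\la^{(t)})$. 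Non-negativity of $\wt$ for multicores then reduces to the $r=2$ case, which I would verify by an abacus computation comparing the two runners; for a general $\blam$ one adds rim hooks via (3).

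The main obstacle is the equivalence $\wt(\blam)=0 \Longleftrightarrow P(\blam)=\blam$ in (1). For non-negativity in general, one combines (3) and (4) with the $r=2$ base case above. For the characterisation, if $\wt(\blam)=0$ then $\blam$ must be a multicore by (3) together with non-negativity, and one can produce a chain $F_{i_m}^{(k_m)}\cdots F_{i_1}^{(k_1)}\varnothing=\blam$ in which the coefficient of $\blam$ is $1$ with no positive powers of $v$ appearing elsewhere; by Proposition~\ref{iInd} this forces $P(\blam)=\blam$. Conversely, if $P(\blam)=\blam$, then any removable rim $e$-hook from $\blam$ would produce via the analogous $i$-restriction a $\bmu\gdom\blam$ with $d_{\bmu\blam}(v)\neq 0$, contradicting $P(\blam)=\blam$; hence $\blam$ is a multicore, and the pairwise formula from (4) must then evaluate to zero. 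The delicate bookkeeping in (4) and the exhibition of a suitable $F_i^{(k)}$-chain for the $(\Leftarrow)$ direction of (1) are the steps where I would expect to invest the most effort.
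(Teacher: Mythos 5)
The paper gives no argument of its own here: its ``proof'' of Proposition~\ref{weightfacts} is a single citation to \cite{Fayers:Weights}, so there is no in-paper approach to compare against. Your sketches of parts (2), (3), (4) and of non-negativity in (1) do follow the standard route (the one taken by Fayers): a rim $e$-hook contains exactly one node of each residue, so removing it preserves every $(c_i(\blam)-c_{i+1}(\blam))^2$ and lowers $\sum_s c_{a_s}(\blam)$ by $r$; the identity $c_{a_1}(\la)=\tfrac12\sum_i(c_i(\la)-c_{i+1}(\la))^2$ for an $e$-core gives (2); and in (4), writing $(c_i(\blam)-c_{i+1}(\blam))^2$ as the square of $\sum_s\bigl(c_i(\la^{(s)})-c_{i+1}(\la^{(s)})\bigr)$ and expanding, the bookkeeping does close up: one finds $\sum_{s<t}\wt(\la^{(s)},\la^{(t)})-\wt(\blam)=(r-2)\sum_s\wt(\la^{(s)})=0$ because each component of a multicore is a core. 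Non-negativity then reduces to bicores, where the $\Min\{\delta_+,\delta_-\}$ description of Lemma~\ref{WeightMatch} makes it manifest.

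The genuine gap is in the equivalence $\wt(\blam)=0\iff P(\blam)=\blam$, where both implications are asserted rather than proved. For the forward direction you need a chain $F_{i_m}^{(k_m)}\cdots F_{i_1}^{(k_1)}\varnothing=\blam$ whose output is exactly $\blam$ with coefficient $1$; this requires exhibiting intermediate multipartitions all of weight zero, showing that each induction step produces a single term, and checking that every degree $N_i(\bsig,\blam)$ along the way vanishes. None of this is addressed, and it is the entire content of that implication. For the converse, the step ``a removable rim $e$-hook produces, via the analogous $i$-restriction, some $\bmu\gdom\blam$ with $d_{\bmu\blam}(v)\neq0$'' is not an $i$-restriction statement as written (removing a rim hook changes $n$, so the resulting multipartition is not even in $\Parts$), and the final claim that a multicore with $P(\blam)=\blam$ must have all pairwise weights zero is unsupported: knowing that the block contains further Specht modules says nothing a priori about the column of $\blam$ in the decomposition matrix. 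Some genuine input is needed here --- for instance that a block of positive defect cannot consist of a single irreducible Specht module, or Fayers' direct argument --- so as it stands part (1) is not proved.
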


\begin{proof}
\begin{enumerate}
\item See~\cite[Corollary~3.9 and Theorem~4.1]{Fayers:Weights}.
\item See~\cite[Propn.~2.1]{Fayers:Weights}.
\item See~\cite[Corollary~3.4]{Fayers:Weights}.
\item See~\cite[Propn.~3.5]{Fayers:Weights}.
\end{enumerate}
\end{proof}

Using these results, the computation of $\wt(\blam)$ reduces to the case where $\blam$ is a bicore, which is also described in \cite{Fayers:Weights}.  Note that if $S^\blam$ lies in a block which is not a core block then $\wt(\blam) \geq r$.  Hence blocks of small weight often correspond to core blocks.  In fact, it is possible to further simplify the decomposition matrices corresponding to core blocks.
Say $\blam \in \Parts$ is \emph{decomposable} if there exist proper subsets $S$ and $T$ partitioning $\{1,\dots,r\}$ in such a way that $\wt(\la^{(s)},\la^{(t)}) = 0$ whenever $s \in S$ and $t \in T$. If $S = \{s_1,\dots,s_a\}$ define $\blam^S=(\la^{(s_1)},\dots ,\la^{(s_a)})$ and define $\blam^T$ analogously.  

In view of the following results of Fayers, which are proved in section 3.5 of \cite{Fayers:Cores}, it makes sense to talk about \emph{decomposable blocks}.

\begin{proposition} \label{DecompBlocks}
Let $\blam$ be a decomposable multipartition with $S$ and $T$ as in the definition above.
\begin{enumerate}
\item  We have that $\blam$ lies in a core block. 
\item If $S^\bmu$ lies in the same block as $S^\blam$ then $\bmu$ is decomposable relative to the same choice of $S$ and $T$.
\item The partition $\blam$ is Kleshchev if and only if $\blam^S$ and $\blam^T$ are both Kleshchev.
\end{enumerate}
\end{proposition}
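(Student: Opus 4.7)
The plan is to work on the abacus and combine Fayers's explicit formula for the weight of a bipair with his classification of core blocks.

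For part (1), I would first check that $\blam$ is itself a multicore: if some $\la^{(s)}$ admitted a removable rim $e$-hook, pick $t$ in the opposite part of $S \sqcup T$ and apply Proposition~\ref{weightfacts}(3) to the bipair $(\la^{(s)},\la^{(t)})$ with $r=2$; removing the hook would drop its weight by $2$, forcing $\wt(\la^{(s)},\la^{(t)}) \geq 2$ by Proposition~\ref{weightfacts}(1) and contradicting the decomposability hypothesis. Hence every component is a core. The promotion from ``multicore'' to ``lies in a core block'' would then come from Fayers's abacus characterization of core blocks: the decoupling of the $S$- and $T$-part abacuses implied by the vanishing cross-weights provides exactly the bead-count compatibility on runners that identifies a core block.

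For part (2), take any $\bmu$ in the block of $\blam$. By (1), $\bmu$ is a multicore, so Proposition~\ref{weightfacts}(4) gives $\wt(\bmu) = \sum_{s<t}\wt(\mu^{(s)},\mu^{(t)})$, and the analogous formula holds for $\blam$. Since $\wt(\bmu) = \wt(\blam)$, I need to show that the cross-terms $\wt(\mu^{(s)},\mu^{(t)})$ with $s \in S$, $t \in T$ cannot become positive in $\bmu$ while vanishing in $\blam$. The key structural input is that, for a fixed core block, each pairwise weight $\wt(\mu^{(s)},\mu^{(t)})$ is actually determined by the block, via the individual bead-count data that Fayers attaches to each component; I would extract this fact from his analysis in \cite{Fayers:Cores}, so that the vanishing of the cross-terms propagates from $\blam$ to $\bmu$.

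For part (3), normality of a removable $i$-node is determined by a local count of addable and removable $i$-nodes lying below it. The decoupling forced by the decomposability hypothesis means that on each runner, addable and removable $i$-nodes in $S$-components never coexist with their counterparts in $T$-components in a way that would cross-contribute to a normality count, so the normal $i$-nodes of $\blam$ are the disjoint union of those of $\blam^S$ and $\blam^T$ (after accounting for the re-indexing of components). Good-node removal therefore preserves decomposability, by (2) applied to the smaller multipartition, and an induction on $n$ finishes the proof.

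The hardest step is the rigidity invoked in (2)---that pairwise bipair weights $\wt(\mu^{(s)},\mu^{(t)})$ are truly block invariants. Once that follows from unpacking Fayers's abacus data for core blocks, parts (1), (2), and (3) all become routine combinatorics on the abacus, with the only delicate point being the ``below''-order reindexing in (3).
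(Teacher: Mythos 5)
The paper itself offers no proof of this statement: it is quoted directly from \S3.5 of \cite{Fayers:Cores}, so your sketch has to be measured against Fayers' arguments rather than anything in the text. The fatal problem is in part (2). The ``key structural input'' you rely on --- that each pairwise weight $\wt(\mu^{(s)},\mu^{(t)})$ is determined by the block --- is false, and the paper's own Section~\ref{S:Weight2} supplies counterexamples: the multipartitions $\alpha^1_k$ and $\gamma^1_{kl}$ lie in the same (indecomposable, weight two) core block, yet computing $\wt^M$ from the displayed matrices gives $\wt(\la^{(1)},\la^{(2)})=0$ and $\wt(\la^{(1)},\la^{(3)})=1$ for $\alpha^1$, but $\wt(\la^{(1)},\la^{(2)})=1$ and $\wt(\la^{(1)},\la^{(3)})=0$ for $\gamma^1$. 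The weight graph is a path on three vertices in both cases, but with a different middle vertex; this is exactly why the paper is careful to claim only that \emph{being a tree} is a block invariant. Part (2) asserts the strictly weaker statement that the vanishing of \emph{all} cross-weights over a fixed cut $(S,T)$ is a block invariant, and your argument does not establish it. A correct route goes through Lemma~\ref{beadswap}: when all cross-weights vanish, no bead swap can involve a row of $S$ and a row of $T$ (such a swap forces $\delta^M_{+}(s,t)\geq 1$ and $\delta^M_{-}(s,t)\geq 1$, i.e.\ $\wt^M(s,t)\geq 1$), and a bead swap within $S$ (or within $T$) is readily checked to preserve the entrywise domination between each $S$-row and each $T$-row that encodes $\wt=0$.

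Parts (1) and (3) also have real gaps. Your rim-hook argument does show that every component of $\blam$ is an $e$-core, but being a multicore does not imply lying in a core block (the paper explicitly notes that a block may contain both multicores and non-multicores), and the ``promotion'' you defer to Fayers' characterisation is where all the content lies: one must produce a single base tuple and $\{0,1\}$-matrix serving all $r$ components simultaneously, including the pairs inside $S$ and inside $T$, about which the hypothesis says nothing. In (3), the normality counts do not decouple in the naive way: vanishing of $\wt(\la^{(s)},\la^{(t)})$ forbids $s$ having a removable $i$-node while $t$ has an addable $i$-node, but it allows both to have removable $i$-nodes, so $T$-components can still contribute removable nodes lying between two $S$-nodes; one needs the sharper observation that if some $S$-component has a removable $i$-node \emph{and} some $S$-component has an addable $i$-node, then the $T$-components carry no $i$-addable or $i$-removable nodes at all. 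Finally, your induction step --- that removing a good node preserves decomposability --- is not an instance of (2), since the smaller multipartition lies in a different block, and it is not obvious (removing a node from $\la^{(s)}$ changes every $\wt(\la^{(s)},\la^{(t)})$); this step needs its own argument.
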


 For $\blam$ lying in a core block, define the \emph{weight graph} $G(\blam)$ of $\blam$ to be the undirected graph with vertices $\{1,2,\ldots,r\}$ and with $\wt(\la^{(s)},\la^{(t)})$ edges between vertices $s$ and $t$.  From the definition above, it is clear that $\wt(\blam)$ is the number of edges in $G(\blam)$ and that $\blam$ is not decomposable if and only if $G(\blam)$ is connected.  

We are now in a position to consider what is known about blocks of small weight.  If a block has weight 0, it contains one Specht module $S^\blam$ with $S^\blam \cong D^\blam$.  A block of weight 1 must either satisfy $r=1$ or be a core block, where either $r=2$ or the block is decomposable with the weight graph consisting of a single edge between two vertices.  The decomposition numbers in the former case are well-known~\cite{James:Ten}.  The case that $r=2$ is dealt with explicitly in~\cite{Fayers:Weights}; however the results may easily be seen to extend to $r >2$.  

A block of weight 2 must either have $r\leq 2$ or be a core block.  The former cases have been dealt with~\cite{Richards,Fayers:Two,Fayers:WeightTwo}.  If it is a decomposable core block, then the techniques in~\cite{Fayers:Weights,Fayers:WeightTwo} enable us to determine the decomposition matrices.  The remaining open case, which we consider in the next section, is when we have an indecomposable core block, that is, when $r=3$ and the weight graph is a line.  

\subsection{Core blocks}
We finish this section with some remarks on core blocks. Recall that $I=\{0,1,\ldots,e-1\}$ if $e\geq 2$ and $I=\Z$ if $e=0$.

Let $\M$ be the set of matrices $M$ with entries in $\{0,1\}$ with $r$ rows labelled $1,2,\ldots,r$ and with columns labelled by the ordered elements of $I$, where if $e=0$ then for all $1 \leq s \leq r$, we have $M(s,i)=1$ and $M(s,j)=0$ for all $i \ll 0$ and all $j \gg 0$.  Let $\check\M \subseteq \M$ be the subset of $\M$ consisting of matrices with the further property that if $e \geq 2$ then each column contains at least one $0$.  

If $e \geq 2$, let $\B$ denote the set of tuples $B= (b_0,\dots,b_{e-1}) \in \mathbb{N}^e$ such that $b_i=0$ for at least one value of $i$ and if $e=0$, let $\B=\{0\}$.    
We call $\B$ the set of \emph{base tuples}. For $e \geq 2$, let ${\bf 0}$ denote the zero tuple $(0,\dots,0)$. 

To each pair consisting of a base tuple $B$ and a matrix $M \in \M$, we will associate an abacus configuration which in turn will give us a multicharge and a multipartition.  (We will see later that the latter lies in a core block with respect to that multicharge). That is, we set up a function 
\[\B \times \M \rightarrow I^r \times \UParts.\]


Suppose first that $e=0$ so that all multipartitions lie in core blocks.  Then there is a bijection between $I^r \times \UParts$ and $\B \times \M$: namely, given  $\mc=(a_1,a_2,\ldots,a_r) \in I^r$ and $\blam \in\UParts$, we identify them with the unique base tuple $0 \in \B$ and the matrix $M\in\M$ given by 
\[M(s,i) =\begin{cases}
1, & i \in \beta_{a_s}(\la^{(s)}), \\
0, & \text{otherwise},
\end{cases}\]
for all $1 \leq s \leq r$ and $i \in I$.   
We write $\blam = \Pt(0,M)$ (and note that $\blam$ is identified with a Young diagram along with its collection of residues, and therefore incorporates complete information about the multicharge).   Let $\prec$ be the total order on $\Z$ which agrees with the usual total order $<$.     

Now suppose $e \geq 2$.  For $B \in \B$, we define a total order $\prec$ on $\{0,1,\ldots,e-1\}$ by saying that
\[i \prec j \iff b_i < b_j \text{ or } b_i=b_j \text{ and } i<j\]
and set $\pi=\pi(B)$ to be the permutation of $\{0,1,\ldots,e-1\}$ such that
\[\pi(0) \prec \pi(1) \prec \ldots \prec \pi(e-1).\]
For $M \in \M$ and $B= (b_0,\dots,b_{e-1})\in\mathfrak{B}$, we define an $r$-tuple of truncated abaci by saying 
\[\#\{\text{Beads on runner $i$ of abacus $s$}\} = b_i + M(s,\pi^{-1}(i)).\]
If $\blam$ is the multipartition associated to this configuration and $\mc$ is the corresponding multicharge then we write $\blam =\Pt(B,M)$ (again on the understanding that information about the multicharge is incorporated in $\blam$).  By $M(s,\pi^{-1}(i))$ we mean the entry in the row indexed by $s$ and the column indexed by $\pi^{-1}(i)$, where $1 \leq s \leq r$ and $0 \leq \pi^{-1}(i) \leq e-1$.  

\begin{ex} \label{AbacusEx}
Let $e=5$ and $r=3$.  Let $B=(1,0,3,2,2)$ so that the total order $\prec$ is given by $1 \prec 0 \prec 3 \prec 4 \prec 2$
and suppose that $M=\begin{pmatrix} 
0&1&0&0&0 \\
1&0&0&1&1 \\
1&1&0&0&0 \\ 
\end{pmatrix}$.
Then the abacus configuration $\Ab(\blam)$ for $\blam=\Pt(B,M)$ is given by 
\iffancy{
\begin{center}
\begin{tikzpicture}\tikzset{yscale=0.6,xscale=0.6} 
\begin{scope}[draw=brown!20!black]
\fill[color =brown] (-1,6)--(-1,6.2)--(4.4,6.2)--(4.4,6)--cycle;
\shade[top color=brown!20!white, bottom color =brown] (0,6.7)--(-1,6.2)--(4.4,6.2)--(5.4,6.7)--cycle;
\shade[top color=brown!20!white, bottom color =brown] (4.4,6)--(4.4,6.2)--(5.4,6.7)--(5.4,6.5)--cycle;
\end{scope}
\foreach \k in {0,1,2,3,4} {
\shadedraw [draw=brown, bottom color=brown!90!black, top color=brown!70!white] (\k-.1,2)--(\k-.1,6) -- (\k+0.1,6)-- (\k+.1,2)--(\k-.1,2);
\filldraw[color=brown, fill=brown!90!black](\k-.1,1.7)--(\k-.1,1.9)--(\k+.1,1.9)--(\k+.1,1.7)--(\k-.1,1.7);
\filldraw[color=brown, fill=brown!95!black](\k-.1,1.4)--(\k-.1,1.6)--(\k+.1,1.6)--(\k+.1,1.4)--(\k-.1,1.4);
}
\foreach \k in {4.5,5.5} {
\shade[ball color=green] (0,\k) circle (10pt);}
\foreach \k in {} {
\shade[ball color=green] (1,\k) circle (10pt);}
\foreach \k in {3.5,4.5,5.5} {
\shade[ball color=green] (2,\k) circle (10pt);}
\foreach \k in {4.5,5.5} {
\shade[ball color=green] (3,\k) circle (10pt);}
\foreach \k in {4.5,5.5} {
\shade[ball color=green] (4,\k) circle (10pt);}
\end{tikzpicture}
\begin{tikzpicture}\tikzset{yscale=0.6,xscale=0.6} 
\begin{scope}[draw=brown!20!black]
\fill[color =brown] (-1,6)--(-1,6.2)--(4.4,6.2)--(4.4,6)--cycle;
\shade[top color=brown!20!white, bottom color =brown] (0,6.7)--(-1,6.2)--(4.4,6.2)--(5.4,6.7)--cycle;
\shade[top color=brown!20!white, bottom color =brown] (4.4,6)--(4.4,6.2)--(5.4,6.7)--(5.4,6.5)--cycle;
\end{scope}
\foreach \k in {0,1,2,3,4} {
\shadedraw [draw=brown, bottom color=brown!90!black, top color=brown!70!white] (\k-.1,2)--(\k-.1,6) -- (\k+0.1,6)-- (\k+.1,2)--(\k-.1,2);
\filldraw[color=brown, fill=brown!90!black](\k-.1,1.7)--(\k-.1,1.9)--(\k+.1,1.9)--(\k+.1,1.7)--(\k-.1,1.7);
\filldraw[color=brown, fill=brown!95!black](\k-.1,1.4)--(\k-.1,1.6)--(\k+.1,1.6)--(\k+.1,1.4)--(\k-.1,1.4);
}
\foreach \k in {5.5} {
\shade[ball color=green] (0,\k) circle (10pt);}
\foreach \k in {5.5} {
\shade[ball color=green] (1,\k) circle (10pt);}
\foreach \k in {2.5,3.5,4.5,5.5} {
\shade[ball color=green] (2,\k) circle (10pt);}
\foreach \k in {4.5,5.5} {
\shade[ball color=green] (3,\k) circle (10pt);}
\foreach \k in {3.5,4.5,5.5} {
\shade[ball color=green] (4,\k) circle (10pt);}
\end{tikzpicture} 
\begin{tikzpicture}\tikzset{yscale=0.6,xscale=0.6} 
\begin{scope}[draw=brown!20!black]
\fill[color =brown] (-1,6)--(-1,6.2)--(4.4,6.2)--(4.4,6)--cycle;
\shade[top color=brown!20!white, bottom color =brown] (0,6.7)--(-1,6.2)--(4.4,6.2)--(5.4,6.7)--cycle;
\shade[top color=brown!20!white, bottom color =brown] (4.4,6)--(4.4,6.2)--(5.4,6.7)--(5.4,6.5)--cycle;
\end{scope}
\foreach \k in {0,1,2,3,4} {
\shadedraw [draw=brown, bottom color=brown!90!black, top color=brown!70!white] (\k-.1,2)--(\k-.1,6) -- (\k+0.1,6)-- (\k+.1,2)--(\k-.1,2);
\filldraw[color=brown, fill=brown!90!black](\k-.1,1.7)--(\k-.1,1.9)--(\k+.1,1.9)--(\k+.1,1.7)--(\k-.1,1.7);
\filldraw[color=brown, fill=brown!95!black](\k-.1,1.4)--(\k-.1,1.6)--(\k+.1,1.6)--(\k+.1,1.4)--(\k-.1,1.4);
}
\foreach \k in {4.5,5.5} {
\shade[ball color=green] (0,\k) circle (10pt);}
\foreach \k in {5.5} {
\shade[ball color=green] (1,\k) circle (10pt);}
\foreach \k in {3.5,4.5,5.5} {
\shade[ball color=green] (2,\k) circle (10pt);}
\foreach \k in {4.5,5.5} {
\shade[ball color=green] (3,\k) circle (10pt);}
\foreach \k in {4.5,5.5} {
\shade[ball color=green] (4,\k) circle (10pt);}
\end{tikzpicture}
\end{center}
} 
\else{
\begin{center}
\begin{tikzpicture}\tikzset{yscale=0.6,xscale=0.6}
\begin{scope}[fill=brown]
\fill (-1,6)--(-1,6.2)--(4.4,6.2)--(4.4,6)--cycle;
\fill (0,6.7)--(-1,6.2)--(4.4,6.2)--(5.4,6.7)--cycle;
\fill (4.4,6)--(4.4,6.2)--(5.4,6.7)--(5.4,6.5)--cycle;
\end{scope}
\foreach \k in {0,1,2,3,4} {
\fill[color=brown] (\k-.1,2)--(\k-.1,6) -- (\k+0.1,6)-- (\k+.1,2)--(\k-.1,2);
\fill[color=brown](\k-.1,1.9)--(\k+.1,1.9)--(\k+.1,1.7)--(\k-.1,1.7)--(\k-.1,1.9);
\fill[color=brown](\k-.1,1.6)--(\k+.1,1.6)--(\k+.1,1.4)--(\k-.1,1.4)--(\k-.1,1.6);
}
\foreach \k in {4.5,5.5} {
\fill [color= bead] (0,\k) circle (10pt);}
\foreach \k in {} {
\fill[color=bead] (1,\k) circle (10pt);}
\foreach \k in {3.5,4.5,5.5} {
\fill[color=bead] (2,\k) circle (10pt);}
\foreach \k in {4.5,5.5} {
\fill[color=bead] (3,\k) circle (10pt);}
\foreach \k in {4.5,5.5} {
\fill[color= bead] (4,\k) circle (10pt);}
\end{tikzpicture}
\begin{tikzpicture}\tikzset{yscale=0.6,xscale=0.6}
\begin{scope}[draw=brown!20!black]
\fill[color =brown] (-1,6)--(-1,6.2)--(4.4,6.2)--(4.4,6)--cycle;
\fill[color =brown] (0,6.7)--(-1,6.2)--(4.4,6.2)--(5.4,6.7)--cycle;
\fill[color =brown] (4.4,6)--(4.4,6.2)--(5.4,6.7)--(5.4,6.5)--cycle;
\end{scope}
\foreach \k in {0,1,2,3,4} {
\fill [color=brown] (\k-.1,2)--(\k-.1,6) -- (\k+0.1,6)-- (\k+.1,2)--(\k-.1,2);
\fill[color=brown!90!black](\k-.1,1.9)--(\k+.1,1.9)--(\k+.1,1.7)--(\k-.1,1.7)--(\k-.1,1.9);
\fill[color=brown!95!black](\k-.1,1.6)--(\k+.1,1.6)--(\k+.1,1.4)--(\k-.1,1.4)--(\k-.1,1.6);
}
\foreach \k in {5.5} {
\fill[color=bead] (0,\k) circle (10pt);}
\foreach \k in {5.5} {
\fill[color=bead] (1,\k) circle (10pt);}
\foreach \k in {2.5,3.5,4.5,5.5} {
\fill[color=bead] (2,\k) circle (10pt);}
\foreach \k in {4.5,5.5} {
\fill[color=bead] (3,\k) circle (10pt);}
\foreach \k in {3.5,4.5,5.5} {
\fill[color=bead] (4,\k) circle (10pt);}
\end{tikzpicture}
\begin{tikzpicture}\tikzset{yscale=0.6,xscale=0.6}
\begin{scope}[fill=brown]
\fill(-1,6)--(-1,6.2)--(4.4,6.2)--(4.4,6)--cycle;
\fill(0,6.7)--(-1,6.2)--(4.4,6.2)--(5.4,6.7)--cycle;
\fill (4.4,6)--(4.4,6.2)--(5.4,6.7)--(5.4,6.5)--cycle;
\end{scope}
\foreach \k in {0,1,2,3,4} {
\fill [color=brown] (\k-.1,2)--(\k-.1,6) -- (\k+0.1,6)-- (\k+.1,2)--(\k-.1,2);
\fill[color=brown](\k-.1,1.9)--(\k+.1,1.9)--(\k+.1,1.7)--(\k-.1,1.7)--(\k-.1,1.9);
\fill[color=brown](\k-.1,1.6)--(\k+.1,1.6)--(\k+.1,1.4)--(\k-.1,1.4)--(\k-.1,1.6);
}
\foreach \k in {4.5,5.5} {
\fill[color=bead] (0,\k) circle (10pt);}
\foreach \k in {5.5} {
\fill[color=bead] (1,\k) circle (10pt);}
\foreach \k in {3.5,4.5,5.5} {
\fill[color=bead] (2,\k) circle (10pt);}
\foreach \k in {4.5,5.5} {
\fill[color=bead] (3,\k) circle (10pt);}
\foreach \k in {4.5,5.5} {
\fill[color= bead] (4,\k) circle (10pt);}
\end{tikzpicture}
\end{center}
} \fi
Note that the order of the abacus runners depends on the order $\prec$. We have $\blam=((4,2^3,1^4),(7,5,4,2^3),(3,1^3))$ and $\mc=(4,1,0)$.  Although it is not immediately obvious, it is the case that $\blam$ lies in a core block.  
\end{ex}

We now go back to the general case where $e=0$ or $e \geq 2$.  

\begin{lemma} \label{BMlamBijection}
Fix a multicharge $\mc$ and suppose $\blam \in \Parts$.  The Specht module $S^\blam$ lies in a core block if and only $\blam =\Pt(B,M)$ for some $B \in \B$ and $M \in \M$.   
\end{lemma}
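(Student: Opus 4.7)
The plan is to invoke Fayers' combinatorial criterion for core blocks~\cite{Fayers:Weights}: when $e\ge 2$, a multicore $\blam$ lies in a core block if and only if, writing $n_{s,i}$ for the number of beads on runner~$i$ of abacus~$s$ in the canonical per-component truncation, there exist integers $b_0,\ldots,b_{e-1}$ with $n_{s,i}\in\{b_i,b_i+1\}$ for every $s$ and $i$. The case $e=0$ is immediate: $\B=\{0\}$, every multipartition is a multicore and lies in a (core) block, and $\Pt(0,M)$ simply reads the $\beta$-numbers of each component directly from $M$, yielding a bijection onto $I^r\times\UParts$.

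Assume $e\ge 2$. For existence, suppose $\blam$ lies in a core block. Set $b_i=\min_s n_{s,i}$ to define $B=(b_0,\ldots,b_{e-1})$, take the associated $\pi=\pi(B)$, and declare $M(s,\pi^{-1}(i))=n_{s,i}-b_i$. Fayers' condition ensures each entry of $M$ lies in $\{0,1\}$, so $M\in\M$; the requirement $B\in\B$ follows from $\min_i b_i=\min_s\min_i n_{s,i}=0$ under the canonical per-component truncation; and $M\in\check\M$ is automatic, because for each column index $j$ the entry $M(s,j)=n_{s,\pi(j)}-b_{\pi(j)}$ vanishes for any $s$ attaining the minimum of $n_{s,\pi(j)}$. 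The identity $\Pt(B,M)=\blam$ is then direct from the construction. For uniqueness, if $(B',M')\in\B\times\check\M$ also satisfies $\Pt(B',M')=\blam$, then the bead counts $n_{s,i}$ are the same; the $\check\M$ condition on $M'$ forces $b'_i=\min_s n_{s,i}=b_i$, so $B'=B$, $\pi(B')=\pi(B)$, and consequently $M'=M$.

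Conversely, given any $(B,M)\in\B\times\check\M$, the construction yields bead counts $n_{s,i}=b_i+M(s,\pi^{-1}(i))\in\{b_i,b_i+1\}$ on each runner, so Fayers' criterion directly places $\Pt(B,M)$ in a core block. The only genuine obstacle is the bookkeeping around abacus truncation: one has to confirm that the canonical per-component truncation used to read off $n_{s,i}$ from $\blam$ is compatible with the truncation convention implicit in the definition of $\Pt(B,M)$, so that the condition $B\in\B$ (some $b_i=0$) really corresponds to a canonical normalization rather than an arbitrary one. The permutation $\pi$ plays no substantive role beyond imposing a canonical ordering on the columns of $M$ and does not affect the essential combinatorics.
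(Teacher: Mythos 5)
There is a genuine gap, and it sits exactly where you flag ``the only genuine obstacle'' before waving it away. The paper's own proof is just a citation to Theorem~3.1 of \cite{Fayers:Cores}, and the entire content of that theorem is the point your argument skips: the choice of truncation (equivalently, of integer lifts of the charges $a_s$, i.e.\ of how many full rows of beads to display in each component) must be made \emph{compatibly across components}, and the compatible choice is in general \emph{not} the canonical per-component truncation. Your version of Fayers' criterion --- ``$\blam$ lies in a core block iff there exist $b_i$ with $n_{s,i}\in\{b_i,b_i+1\}$, where $n_{s,i}$ is read from the canonical per-component truncation'' --- is not what Fayers proves; the correct statement existentially quantifies over the per-component truncations as well. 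Concretely, take $e=2$, $r=2$, $\mc=(0,0)$ and $\blam=((2,1),\emptyset)$. This lies in a core block: with $B=(0,1)$ one displays $\la^{(1)}$ with runner counts $(0,2)$ and $\la^{(2)}$ with runner counts $(1,1)$ (one full row added to the empty component), so every count lies in $\{b_i,b_i+1\}$. But the canonical per-component truncations give counts $(0,2)$ and $(0,0)$, your recipe $b_i=\min_s n_{s,i}$ yields $B=(0,0)$, and then $M(1,\cdot)=(0,2)$ has an entry equal to $2$, so $M\notin\M$. Thus your existence construction fails on a genuine core block, and ``Fayers' condition ensures each entry of $M$ lies in $\{0,1\}$'' is not available to you, because the condition holds only for the correctly shifted display, not for the one you compute from.

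The same issue infects uniqueness: you assert that $\Pt(B',M')=\blam$ forces ``the bead counts $n_{s,i}$ are the same,'' but the counts are only determined up to adding a constant to each row (each such shift changes the display of a component without changing the partition or its charge in $I$), so pinning down $(B,M)$ requires ruling out these per-component shifts as well as the global one; the conditions $B\in\B$ and $M\in\check\M$ do that work only after a nontrivial argument. The converse direction of your proof is fine once the criterion is cited in its correct (existential-over-truncations) form. To repair the proof you would need to either reproduce the argument of \cite[Theorem~3.1]{Fayers:Cores} --- showing that for a multicore in a core block there is a choice of row shifts $k_s$ making all adjusted counts lie in a window $\{b_i,b_i+1\}$, and that the normalizations defining $\B$ and $\check\M$ single out one such choice --- or simply cite that theorem, which is what the paper does.
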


\begin{proof} This is given by Theorem~3.1 in \cite{Fayers:Cores}.  
\end{proof} 

Note that if $e=0$, the pair $\Pt(B,M)$ is uniquely determined.  If $e\geq 2$ and $\blam$ lies in a core block then there is a unique pair  $B \in \B$ and $M \in \check\M$ such that $\blam=\Pt(B,M)$.  However if column $i$ of $M$ consists of entries equal to 0 then we also have $\blam=\Pt(B',M')$ for a matrix $M \in \M \setminus \check{\M}$ obtained by changing all the entries in column $i$ of $M$ to 1.  We will need such matrices later, hence why we do not restrict to choosing $M \in \check{\M}$.  

We now describe how to calculate the weight of a core block, and, given $S^\blam$ in a core block, how to describe the other Specht modules in the block.  
First suppose $M \in \M$.  For $1 \leq s,t \leq r$ define
\begin{align*}
\delta^M_{+}(t,s) & = \#\{0 \leq i \leq e-1 \mid M(t,i)=1 \text{ and } M(s,i)=0\}, \\
\delta^M_{-}(t,s) & = \#\{0 \leq i \leq e-1 \mid M(t,i)=0 \text{ and } M(s,i)=1\},\\
\wt^M(t,s)& = \Min\{\delta^M_{+}(t,s),\delta^M_{-}(t,s)\},
\end{align*}
and define
\[\wt(M) = \sum_{1 \leq s<t \leq r} \wt^M(t,s).\]

\begin{lemma} \label{WeightMatch}
Suppose $\blam =\Pt(B, M)$.  Then $\wt(\la^{(t)},\la^{(s)})=\wt^M(t,s)$ for all $1 \leq s<t \leq r$ and therefore $\wt(\blam) = \wt(M)$.
\end{lemma}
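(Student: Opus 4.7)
My plan is to first reduce the statement to the bicore case, then translate the matrix data $M$ into abacus bead-counts, and finally apply Fayers' bicore weight formula from \cite{Fayers:Weights}.

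The reduction is immediate. Since $\blam = \Pt(B,M)$ lies in a core block (by Lemma~\ref{BMlamBijection}), Proposition~\ref{weightfacts}(4) gives $\wt(\blam) = \sum_{s<t}\wt(\la^{(s)},\la^{(t)})$, while by definition $\wt(M) = \sum_{s<t}\wt^M(t,s)$. Hence the second assertion follows from the first, and we may fix a pair $s<t$ and effectively work with the two-row submatrix of $M$ indexed by rows $s$ and $t$, reducing to the case $r=2$.

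The next step is to read off, from the construction of $\Pt(B,M)$, the bead counts on the two abaci. By definition, runner $i$ of abacus $s$ carries $b_i + M(s,\pi^{-1}(i))$ beads and runner $i$ of abacus $t$ carries $b_i + M(t,\pi^{-1}(i))$ beads, where $\pi = \pi(B)$. Since $M$ has entries in $\{0,1\}$, the bead-count difference on any runner lies in $\{-1,0,1\}$. Setting $u=\pi^{-1}(i)$, the column $u$ of $M$ (on rows $s,t$) is of type $(0,1)$ exactly when abacus $t$ has one more bead than abacus $s$ on runner $i$, and of type $(1,0)$ exactly when abacus $s$ has one more bead than abacus $t$. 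Counting these contributions gives $\delta^M_+(t,s)$ runners of the first kind and $\delta^M_-(t,s)$ of the second.

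It then remains to identify $\wt(\la^{(t)},\la^{(s)})$ with $\min\{\delta^M_+(t,s),\delta^M_-(t,s)\}$. Fayers' analysis of bicores in \cite{Fayers:Weights} expresses $\wt(\la^{(s)},\la^{(t)})$ directly in terms of the bead counts on corresponding runners of the two abaci; in the special situation we are in, where those counts differ by at most one on every runner, the general formula collapses to the minimum above. Concretely, one may verify this either by expanding the sum-of-squares formula from the definition of weight using the $\{-1,0,1\}$ difference data, or by exhibiting a sequence of rim $e$-hook moves between the two components that pair each ``excess'' runner with a ``deficit'' runner, realising Fayers' bound.

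The one technical obstacle is the translation between the matrix language of this paper and the bead-count language in \cite{Fayers:Weights}; once the dictionary (bead count on runner $\pi(u)$ of abacus $s$ equals $b_{\pi(u)} + M(s,u)$) is in place, the lemma follows from Fayers' bicore weight identity. Since the essential computation is carried out in \cite{Fayers:Cores}, one may alternatively cite Theorem~3.4 there directly.
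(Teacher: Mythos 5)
Your proposal is correct and follows the same route as the paper, which simply states that the result follows from \cite{Fayers:Weights} after a change of notation; you have merely spelled out the dictionary (bead count on runner $i$ of abacus $s$ equals $b_i + M(s,\pi^{-1}(i))$) and the reduction to the bicore case via Proposition~\ref{weightfacts}(4) that the paper leaves implicit. No gaps.
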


\begin{proof}
Allowing for the change of notation, this follows from~\cite[Propn.~3.8]{Fayers:Weights}.
\end{proof}

Let $M\in \M$.  Suppose there exist $i,j \in I$ and $1 \leq s,t \leq r$ with 
\[M(s,i)=M(t,j)=1, \quad M(s,j)=M(t,i)=0.\]
Let $M'$ be the matrix with 
\[M'(s,i)=M'(t,j)=0, \quad M'(s,j)=M'(t,i)=1, \quad M'(u,k)=M(u,k) \text{ otherwise}.\]
Say that $M'$ is obtained from $M$ by a \emph{bead swap}, and define the equivalence relation $\tb$ on $\M$ to be the equivalence relation generated by all bead swaps.  The next result comes from~\cite[Propn.~3.7]{Fayers:Cores}.

\begin{lemma} \label{beadswap}
Suppose that $\blam=\Pt(B,M)$.  If $\bmu$ is a multipartition then $S^\bmu$ belongs to the same core block as $S^\blam$ if and only if $\bmu:=\Pt(B,K)$ for some $K \tb M$.  
\end{lemma}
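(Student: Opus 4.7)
The plan is to combine Proposition~\ref{Block}, which characterises block membership via residue content, with Lemma~\ref{BMlamBijection}, which parameterises multicore multipartitions by pairs $(B,M) \in \B \times \check\M$, and thereby reduce the statement to a combinatorial question about $0/1$-matrices.

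For the \emph{if} direction, it suffices to handle a single bead swap $M \to M'$. On the abacus level, the swap moves one bead on abacus $s$ from runner $\pi(i)$ to runner $\pi(j)$ while simultaneously moving one bead on abacus $t$ from runner $\pi(j)$ to runner $\pi(i)$. Since the residue of a node added by shifting a bead one position (from $p$ to $p+1$) depends only on $(p+1) \bmod e$ and not on which abacus is involved, I would decompose each of the two abacus moves into sequences of single-position shifts and tabulate the net change in $c_i$ for each residue $i$. Because the sequences traverse the runners in opposite directions, their contributions to the residue content cancel. Proposition~\ref{Block} then yields $\Pt(B,M')$ in the same block as $\Pt(B,M)$, and induction on the length of a swap sequence handles the general case.

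For the \emph{only if} direction, suppose $S^\bmu$ lies in the same core block as $S^\blam$. Apply Lemma~\ref{BMlamBijection} to write $\bmu = \Pt(B',K)$ uniquely with $K \in \check\M$. I would show $B = B'$ by noting that the multicharge is a block invariant, so fixed across $(B,M)$ and $(B',K)$, and the base tuple is then recovered from any multicore in the block via
\[ b_i = \min_{1 \leq s \leq r} \#\{\text{beads on runner } i \text{ of abacus } s\}, \]
which is well-defined because $M, K \in \check\M$ forces a $0$ in each column of both matrices. With $B = B'$ fixed, the condition of being in the same block translates, via the calculation from the previous step, into equality of the column sums of $M$ and $K$, and the common multicharge forces the total bead count on each abacus to agree, giving equality of row sums.

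The main obstacle is the resulting combinatorial claim: any two matrices $M, K \in \check\M$ with coinciding row-sum and column-sum vectors are connected by a sequence of bead swaps. I would prove this by induction on the Hamming distance between $M$ and $K$. Given a position $(s,i)$ with $M(s,i) = 1$ and $K(s,i) = 0$, equality of column sum $i$ produces a row $t$ with $M(t,i) = 0$ and $K(t,i) = 1$; equality of the row sums for rows $s$ and $t$ then supplies a column $j$ on which the relevant $M$-entries are reversed relative to those of $K$. The bead swap at $\{(s,i),(s,j),(t,i),(t,j)\}$ preserves both row and column sums (and hence membership in $\check\M$) while strictly reducing the Hamming distance to $K$, closing the induction and so giving $M \tb K$.
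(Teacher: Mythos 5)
The paper gives no proof of this lemma at all---it simply cites Fayers' \emph{Core blocks of Ariki-Koike algebras}---so your proposal is really being measured against Fayers' argument, whose architecture you have essentially reconstructed: reduce ``same block'' to ``same base tuple, same row sums, same column sums'' via Proposition~\ref{Block}, then connect any two $0$--$1$ matrices with equal margins by $2\times2$ interchanges. Your ``if'' direction is sound: the two beads exchanged occupy the \emph{same} pair of integer positions $\pi(i)+b_ie$ and $\pi(j)+b_je$ on the two abaci, so the multiset of residues gained on one component is exactly the multiset lost on the other, and the content is preserved.

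The genuine gap is in the final combinatorial step. The statement you need (Ryser's interchange theorem) is true, but your inductive step fails: having found $(s,i)$ with $M(s,i)=1\neq K(s,i)$ and a row $t$ with $M(t,i)=0$, $K(t,i)=1$, there need not exist any column $j$ with the entries of rows $s$ and $t$ reversed. Take
\[
M=\begin{pmatrix}1&0&1\\0&0&1\\0&1&0\end{pmatrix},\qquad
K=\begin{pmatrix}0&1&1\\1&0&0\\0&0&1\end{pmatrix},
\]
both in $\check\M$, with equal row sums $(2,1,1)$ and column sums $(1,1,2)$. With $(s,i)=(1,1)$ the only admissible $t$ is $t=2$, but there is no column $j$ with $M(1,j)=0$ and $M(2,j)=1$, so no bead swap involving rows $1,2$ and column $1$ exists at all, let alone one decreasing the Hamming distance. (The matrices are nonetheless $\tb$-equivalent: swap rows $1,3$ in columns $1,2$, then rows $2,3$ in columns $1,3$.) Moreover, even when your $j$ exists with only the $M$-entries reversed, the swap can leave the Hamming distance unchanged. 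The inductive step must be replaced by the standard alternating-path argument: in $D=M-K$ follow a $+1$ to a $-1$ in the same row, then to a $+1$ in the same column, and so on until the path closes into a cycle, on which an interchange strictly reducing the number of nonzero entries of $D$ can always be performed.

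Two smaller points in the ``only if'' direction deserve tightening. First, the common multicharge only forces the total bead counts, hence the row sums, to agree \emph{modulo} $e$; one must rule out a row sum of $0$ against a row sum of $e$ (this is where the normalisations implicit in Lemma~\ref{BMlamBijection} enter). Second, the assertion that $b_i=\min_s\#\{\text{beads on runner }i\text{ of abacus }s\}$ is a block invariant is not justified as written: what Proposition~\ref{Block} actually controls is the \emph{total} $\sum_s\#\{\text{beads on runner }i\}$, via the identity relating $c_i(\blam)-c_{i+1}(\blam)$ to the net number of beads arriving at runner $i$. One then recovers $b_i$ and the column sum simultaneously by division with remainder, using the $\check\M$ condition to place the column sum in $\{0,\dots,r-1\}$. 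That is the cleaner route to both $B=B'$ and the equality of column sums, and it is essentially what your ``tabulate the net change in $c_i$'' computation from the first half already provides.
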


We may therefore identify the Specht modules in a core block with the elements in an equivalence class of $\M / \tb$.  The next results suggest that this will be a useful identification.  

\begin{proposition}\label{KleshReduce}
Let $e \geq 2$.  Suppose that $\blam= \Pt(B,M)$ is a partition in a core block. Then $\blam$ is Kleshchev if and only if $\Pt( {\bf 0},M)$ is Kleshchev.
\end{proposition}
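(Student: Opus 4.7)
The plan is to proceed by induction on $|B| := \sum_{i \in I} b_i$. The base case $|B| = 0$ forces $B = \mathbf{0}$, where the statement is a tautology. For the inductive step with $|B| > 0$, choose $j \in I$ with $b_j > 0$ and set $B' = B - \epsilon_j$ (where $\epsilon_j$ is the $j$-th standard basis vector). Since $B \in \B$ has at least one coordinate equal to zero, so does $B'$, so $B' \in \B$. By the inductive hypothesis applied to $B'$, the partition $\Pt(B', M)$ is Kleshchev if and only if $\Pt(\mathbf{0}, M)$ is. Thus it suffices to show that $\Pt(B, M)$ is Kleshchev if and only if $\Pt(B', M)$ is.

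To compare $\blam = \Pt(B, M)$ with $\blam' = \Pt(B', M)$, I would first determine how the associated order $\prec$ and permutation $\pi$ change when $b_j$ decreases by one. There are two sub-cases. If $\prec$ is unchanged (so $\pi(B) = \pi(B')$), then the abacus configurations of $\blam$ and $\blam'$ differ only by a single bead at the top of runner $\pi(j)$ on each of the $r$ abaci; this uniform bead addition corresponds to a controlled shift in the multicharge of each component, and one can exhibit a bijection between the addable/removable nodes of $\blam$ and those of $\blam'$ that preserves both residues (up to the shift) and the ordering by position, so the recursion defining Kleshchev multipartitions via good nodes produces the same outcome. If $\prec$ changes, the change is forced to be an adjacent transposition in the order, and at the level of abaci this amounts to a Scopes-like runner swap between two runners whose bead counts on each abacus differ by exactly one; such a swap induces an adjacent transposition on the residue labels, and the resulting relabeling of addable/removable nodes again yields a bijection compatible with the good-node structure.

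The main obstacle is establishing the runner-swap case with enough care to guarantee that the bijection on removable/addable nodes respects the "below" relation used in defining normal and good nodes. This is, in essence, a Scopes-type equivalence adapted to the Ariki--Koike setting, and the bead-count hypothesis (difference of exactly one on every abacus) is precisely what the Scopes condition requires; the technical verification should follow from Fayers' analysis of core blocks and bead swaps cited above (Lemmas~\ref{WeightMatch} and \ref{beadswap}). Once the two sub-cases are handled, the induction closes and yields the equivalence of Kleshchev-ness for $\blam$ and $\bmu$.
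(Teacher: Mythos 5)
Your overall strategy (induction on $\sum_i b_i$, reducing the base tuple one coordinate at a time) has the same flavour as the paper's, but the key reduction step does not work as described. In the sub-case where $\prec$ is unchanged, you assert a residue- and order-compatible bijection between the addable/removable nodes of $\Pt(B,M)$ and of $\Pt(B-\epsilon_j,M)$. No such bijection exists: removing one bead from runner $j$ of every abacus changes the bead-count differences between runner $j$ and its two neighbouring runners, and hence changes the \emph{numbers} of addable and removable nodes of residues $j$ and $j+1$ (the two objects are not even multipartitions of the same integer $n$, so "bijection of nodes" can only mean a matching of the recursive good-node structure, which is exactly the thing that needs proof). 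In the sub-case where $\prec$ changes, the change need not be an adjacent transposition: with $e=4$, $B=(0,1,1,1)$ and $j=3$, the order changes from $0\prec 1\prec 2\prec 3$ to $0\prec 3\prec 1\prec 2$, so $3$ jumps from last place to second. Nor does the Scopes-type hypothesis you invoke (bead counts differing by exactly one on every abacus) actually hold: the counts on runners $i$ and $i'$ of abacus $s$ are $b_i+M(s,\pi^{-1}(i))$ and $b_{i'}+M(s,\pi^{-1}(i'))$, and their difference varies with $s$.

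The missing idea is that the passage from $\Pt(B,M)$ to $\Pt({\bf 0},M)$ must be realised by explicit sequences of good-node removals, and this is only possible for carefully chosen residues. The paper's proof rests on the observation that if $\blam$ has no addable $i$-nodes then every removable $i$-node is normal, so removing all of them preserves Kleshchev-ness; on the abacus this swaps the bead counts of two adjacent runners and hence swaps two adjacent entries of $B$ while leaving $M$ fixed (because the two indices also swap in $\prec$). One first uses such swaps, justified by the condition $b_i<b_{i+1}$, to sort $B$ into weakly decreasing form; then strips all removable $0$-nodes (again all normal, since $b_0$ is maximal so there are no addable $0$-nodes) and renormalises so that the base tuple keeps a zero entry; and finally disposes of a base tuple of the form $(1,\dots,1,0,\dots,0)$ by an honest shift of the multicharge. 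If you want to keep your single-decrement induction you would still have to prove the intermediate claim that $\Pt(B,M)$ is Kleshchev if and only if $\Pt(B-\epsilon_j,M)$ is, and for an arbitrary choice of $j$ there is no sequence of good-node removals connecting the two configurations; the decrement has to be organised along the lines above.
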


\begin{proof}Write $\bmu$ for the multipartition $\Pt( {\bf 0}, M)$. It suffices to prove that we can reduce from $\blam$ to $\bmu$ by removing a sequence of good nodes. Clearly, if the highest $s$ nodes of a given residue in $\blam$ are all normal, then the multipartition $\bsig$ obtained from $\blam$ by removing them all will be Kleshchev if and only if $\blam$ is Kleshchev. Writing $B = (b_0,\dots,b_{e-1})$ as usual, we will proceed by induction on $\sum_{i=0}^{e-1} b_i \geq 0$, with the base case being trivial. 

We begin by considering the case when $b_i < b_{i+1}$ for some $i$. Then it is not possible for there to be any addable $i+1$-nodes in $\blam$, and so all removable $i+1$-nodes are normal. The multipartition $\bsig$ obtained by removing all the removable $i+1$-nodes satisfies $\bsig = \Pt(B',M)$, where $B' = (b_0, \dots, b_{i+1}, b_i, \dots, b_{e-1})$ (observe that $i$ and $i+1$ swap places in the $\prec$ ordering as we pass from $\blam$ to $\bsig$, so $M$ does not change) and $\blam$ is Kleshchev if and only if $\bsig$ is Kleshchev. By repeatedly applying this argument, we may assume that 
$$b_0 = \dots = b_j > b_{j+1} \geq \dots \geq  b_{k-1} > b_k = \dots =b_{e-1} = 0 $$   
for some $0 \leq j  < k \leq e-1$. Now there are two cases to consider. 

Firstly, if $b_0 > 1$ then there are removable $0$-nodes in $\blam$ but no addable $0$-nodes, and so we can consider the multipartition $\bsig$ obtained by removing all removable $0$-nodes from $\blam$ as above. Again, $\bsig$ will be Kleshchev if and only if $\blam$ is, and since $0$ and $e-1$ swap places in the $\prec$ ordering as we pass from $\blam$ to $\bsig$ we can write $\bsig = \Pt(B',M)$. Now, if $k <e-1$, so that $b_{e-2} = 0$, then $B' = (1,b_1,\dots,b_{e-2},b_0-1)$ and we can return to the previous case, but with fewer entries of the multicharge equal to 0, so we may repeat the arguement until we obtain $k=e-1$. If $k = e-1$, then $b_{e-2}>0$, and (since the definition of $B$ requires at least one entry to be zero) we have $B' = (0,b_1 - 1,\dots, b_{e-2}-1,b_0-2)$ and we can apply the inductive hypothesis to $\bsig$. 

Secondly (and finally) we have the case in which $b_0 = 1$ and $j=k-1$, so that $B = (1, \dots, 1, 0 ,\dots, 0)$ with the final $1$ occurring in the $j$th position. Here we can reduce to the base case: this time, let $\bsig$ be obtained from $\blam$ by moving every bead in $\blam$ $j$ positions to the left. Then $\bsig = \Pt ( { \bf 0}, M) = \bmu$, since the columns corresponding to the ones in $\blam$ now correspond to the rightmost zeros in $\bmu$, so $M$ remains unchanged. Now, since the only difference between $\bmu$ and $\blam$ is that the multicharge associated to $\bsig=\bmu$ is obtained by subtracting $k \mod e$ from each entry in the multicharge associated to $\blam$, it follows that $\blam$ is Kleshchev if and only if $\bmu$ is. \end{proof}

Define the set $\Ind$ by
\[\Ind=\begin{cases}
\Z, & e=0, \\
\{1,2,\ldots,e-1\}, & e \geq 2.
\end{cases}\]
Now let $L \in \M$.  Suppose that we have $i \in \Ind$ and $k >0$.  Write 
$L \xrightarrow{i:k} M$ 
if $M \in \M$ can be formed by choosing distinct rows $r_1,r_2,\ldots,r_k$ of $L$ with $L(r_a,i-1)=1$ and $L(r_a,i)=0$ for all $1 \leq a \leq k$ and setting
\[M(s,j) = \begin{cases}
0, & j=i-1, \, s=r_a \text{ for some } 1 \leq a \leq k, \\
1, & j=i, \, s=r_a \text{ for some } 1 \leq a \leq k, \\
L(s,j), & \text{otherwise}.  
\end{cases}\]
We can think of this as moving $k$ ones from column $i-1$ to column $i$ (and $k$ zeros back).    
If $L \xrightarrow{i:k} M$, define 
\begin{multline*}
N_{i}(L,M)=\sum_{a=1}^k \#\{ s \mid s >r_a \text{ and } M(s,i-1)=1 \text{ and } M(s,i)=0\} \\
- \#\{ s \mid s >r_a \text{ and } L(s,i-1)=0 \text{ and } L(s,i)=1\}.  
\end{multline*}
Consider the $\mathbb{C}(v)$-vector space $\mathfrak{N}$ with basis the elements of $\M$.  
For $L \in \M$, $i \in \Ind$ and $k >0$, define
\[G^{(k)}_i L = \sum_{L \xrightarrow{i:k} M} v^{N_i(L,M)} M\]
and extend linearly to give a map $G^{(k)}_i:\mathfrak{N}\rightarrow \mathfrak{N}$.    

\begin{proposition} \label{NiceInd}
Suppose that $\bsig =\Pt(B,L)$.  Let $i \in \Ind$ and $k >0$ and suppose that  
\[G^{(k)}_i L = \sum_{L \xrightarrow{i:k} M} v^{N_i(L,M)} M.\]  
Then there exist $i_1,i_2,\ldots,i_{m'} \in \Z/e\Z$ and $k_1,k_2,\ldots,k_{m'} >0$ such that 
\[F^{(k_{m'})}_{i_{m'}} \ldots F^{(k_2)}_{i_2} F^{(k_1)}_{i_1} \bsig = \sum_{L \xrightarrow{i:k} M} v^{N_i(L,M)} \Pt(B,M).\]
\end{proposition}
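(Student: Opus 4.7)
The approach is to realize the matrix-level operator $G_i^{(k)}$ as a composition of Chevalley operators $F_j^{(k_j)}$ on the Fock space, via the abacus encoding $\Pt(B,-)$. Under this encoding, column $j$ of a matrix $L \in \M$ is identified with runner $\pi(j)$ of the abacus; the operation $L \xrightarrow{i:k} M$ (which moves $k$ ones from column $i-1$ to column $i$) therefore corresponds on the abacus to moving $k$ beads from runner $\pi(i-1)$ to runner $\pi(i)$.

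First I would dispose of the case $e = 0$, where $\pi$ is the identity on $\Z$ and the correspondence is essentially tautological: applying the single operator $F_i^{(k)}$ to $\Pt(0, L)$ produces $\sum v^{N_i(L,M)} \Pt(0, M)$ directly, with $N_i$ in the matrix and abacus senses agreeing by direct comparison of the two definitions.

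For $e \geq 2$, set $a = \pi(i-1)$ and $b = \pi(i)$, and let $d = (b - a) \bmod e$ be the cyclic distance traversed by the target beads. The plan is to exhibit a cascade of the form
\[
F_b^{(k_d)} \circ F_{b-1}^{(k_{d-1})} \circ \cdots \circ F_{a+1}^{(k_1)},
\]
with indices reduced mod $e$, where each divided power $k_\ell$ is chosen in terms of $k$ and the base counts $b_j$ so that the combined effect shifts precisely the intended $k$ beads from runner $a$ all the way to runner $b$. The structural inequality $b_{\pi(0)} \leq \ldots \leq b_{\pi(e-1)}$ plays a crucial role: it dictates how the $F_j$-operators act on beads at the base level, ensuring that each intermediate step is tightly controlled.

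The main obstacle is the combinatorial bookkeeping. After each operator $F_{a+\ell}^{(k_\ell)}$, the result is a priori a sum over many abacus configurations; one must show that after the full cascade, only configurations of the form $\Pt(B, M)$ with $L \xrightarrow{i:k} M$ survive, and that the accumulated $v$-exponent is exactly $N_i(L, M)$. I would handle this by induction on $d$: the base case $d = 1$ amounts to matching the combinatorial definition of $N_{\pi(i)}$ at the abacus level with that of $N_i$ at the matrix level, since the row-count comparisons defining the latter are exactly the node-count comparisons defining the former when $\pi(i) = \pi(i-1)+1$. For the inductive step, I would verify that the first operator $F_{a+1}^{(k_1)}$ produces a sum over intermediate configurations $\Pt(B', L')$ with a modified base $B'$ differing from $B$ by swapping two adjacent entries of $B$ (reflecting that the beads have passed through runner $a+1$), so that applying the inductive hypothesis to the remaining cascade — together with a final cyclic adjustment — re-establishes the original base $B$ and produces the desired sum. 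The telescoping of the intermediate $v$-weights to $N_i(L,M)$ then follows from the fact that at each intermediate runner, the contributions from the base beads cancel against the complementary moves, leaving only the row-comparisons tracked by $N_i$.
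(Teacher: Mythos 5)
Your overall framework---identifying column $j$ of $L$ with runner $\pi(j)$ and realising $G^{(k)}_i$ as a product of divided powers $F_j^{(m)}$---is the right one, and your treatment of $e=0$ agrees with the paper. But the central construction for $e\geq 2$, a single forward cascade $F_b^{(k_d)}\cdots F_{a+1}^{(k_1)}$ of length $d=(b-a)\bmod e$ with $a=\pi(i-1)$, $b=\pi(i)$, cannot work, and no choice of the divided powers $k_\ell$ repairs it. Set $\Delta=b_b-b_a\geq 0$ (nonnegative because $a\prec b$ are adjacent in the $\prec$-order). The bead being transferred sits at row $b_a+1$ of runner $a$ and must end at row $b_b+1$ of runner $b$, so its $\beta$-number must increase by $d+\Delta e$; equivalently, comparing residue contents, $c_j(\Pt(B,M))-c_j(\Pt(B,L))$ equals $k(\Delta+1)$ for the $d$ residues $j$ on the forward arc from $a+1$ to $b$ and equals $k\Delta$ for every other residue. (Check this against Example~\ref{IndEx}: there $\Delta=1$ and the sequence $F_2^{(2)}F_1^{(2)}F_3^{(2)}F_4^{(1)}F_0^{(1)}$ adds two nodes of each of the residues $1,2,3$ and one node of each of the residues $4,0$.) Your cascade uses only the residues $a+1,\dots,b$ and therefore adds no nodes of any other residue, so whenever $\Delta>0$ it cannot possibly produce $\Pt(B,M)$; in particular your base case $d=1$ already fails when $b_{\pi(i-1)}<b_{\pi(i)}$. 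Even when $\Delta=0$ a direct sweep is blocked by intermediate runners $j'$ with $b_{j'}>b_b$, whose beads occupy the row the travelling bead would need to pass through.

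What the paper does instead is go all the way around the abacus: it first moves $k\Delta$ beads (net) from the bottom of runner $\pi(i)$ forward through runners $\pi(i)+1,\dots,e-1$ onto runner $\pi(i-1)$, using divided powers $k\Delta$ throughout, and then moves $k(\Delta+1)$ beads from the bottom of runner $\pi(i-1)$ forward onto runner $\pi(i)$, using divided powers $k(\Delta+1)$; the net effect on each participating component is exactly the single bead transfer encoded by $L\xrightarrow{i:k}M$, and a counting argument shows the product is nonzero only when exactly $k$ rows with $L(s,i-1)=1$, $L(s,i)=0$ contribute. This is a sequence of length $e$ with two different divided powers, not a sweep of length $d$, and it is the ingredient your proposal is missing. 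Your proposed inductive step is also not well-founded as stated: $F_{a+1}^{(k_1)}$ adds nodes in only some components, so its output is not a sum of configurations of the form $\Pt(B',L')$ for a single modified base tuple $B'$, and the intermediate configurations need not lie in core blocks at all.
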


\begin{proof}
If $e=0$ then this follows immediately from the definitions, so suppose that $e\geq 2$. 
Let $\prec$ be the total order on $I=\{0,1,\ldots,e-1\}$ associated with the base tuple $B=(b_0,b_1,\ldots,b_{e-1})$ and $\pi$ the corresponding permutation.  
 Let $j_0 = \pi(i-1)$ and let $j_1 = \pi(i)$ so that $j_0 \prec j_1$ and there does not exist $j' \in I$ such that $j_0 \prec j' \prec j_1$. To minimise notation, we will assume that $j_0=0$; for the more general case, either make some slight modifications to the argument below or note that the combinatorics do not change if we shift the multicharge so that this holds.  Now observe that the base tuple $B$ satisfies the following properties:
\begin{itemize}
\item $b_{j_0} \leq b_{j_1}$.
\item If $j_0 < j' < j_1$ then $b_{j'}< b_{j_0}$ or $b_{j'}>b_{j_1}$.
\item If $j_1 < j' \leq e-1$ then $b_{j'}<b_{j_0}$ or $b_{j'} \geq b_{j_1}$.  
\end{itemize}
Let $\Delta=b_{j_1}-b_{j_0}$.  
Suppose that $l_0 < l_1<\ldots<l_m$ are the elements of the set \[\{l \in I \mid l < j_1 \text{ and } b_l >b_{j_1}\} \cup \{l \in I \mid l  \geq j_1 \text{ and } b_l  \geq b_{j_1}\}\cup \{j_0\},\]    
so that $j_0=l_0$ and $j_1=l_y$ for some $y$.
We claim that  
\begin{multline}  \label{IndSeqGiven}
(F^{(k(\Delta+1))}_{l_{1}} \ldots F^{(k(\Delta+1))}_{l_{0}+2} F^{(k(\Delta+1))}_{l_{0}+1})
\ldots
(F^{(k(\Delta+1))}_{l_{y}} \ldots F^{(k(\Delta+1))}_{l_{y-1}+2} F^{(k(\Delta+1))}_{l_{y-1}+1})
(F^{(k\Delta)}_{l_{y+1}} \ldots F^{(k\Delta)}_{l_{y}+2} F^{(k\Delta)}_{l_{y}+1}) \\ \ldots (F^{(k\Delta)}_{l_m} \ldots F^{(k\Delta)}_{{l_{m-1}}+2} F^{(k\Delta)}_{l_{m-1}+1})  (F^{(k\Delta)}_0 F^{(k\Delta)}_{e-1} \ldots F^{(k\Delta)}_{l_m+2} F^{(k\Delta)}_{l_m+1}) \bsig  = \sum_{L \xrightarrow{i:k} M} v^{N_i(L,M)} \Pt(B,M).
\end{multline}
Before continuing with this proof, the reader might like to examine Example~\ref{IndEx} below.  

We first investigate which abacus configurations can occur as a result of this induction sequence.  Consider 
\[(F^{(k\Delta)}_{l_{y+1}}\ldots F^{(k\Delta)}_{l_{y}+2} F^{(k\Delta)}_{l_{y}+1}) \ldots (F^{(k\Delta)}_{l_m} \ldots F^{(k\Delta)}_{{l_{m-1}}+2} F^{(k\Delta)}_{l_{m-1}+1})  (F^{(k\Delta)}_0 F^{(k\Delta)}_{e-1} \ldots F^{(k\Delta)}_{l_m+2} F^{(k\Delta)}_{l_m+1}) \bsig.\]
With the restrictions on $\Ab(\bsig)$, the abacus configurations that occur in this sum are those that can be obtained by choosing $k \Delta$ beads each of which lies on runner $j_1$ of some abacus and moving each of them forward $e-j_1$ positions to an empty position on runner 0.  Now suppose we have such a configuration, $\Ab(\btau)$ say, where $\delta_s$ such beads have been moved on abacus $s$, for $1 \leq s \leq r$,  
and consider
\begin{equation} \label{FirstInduction}
(F^{(k(\Delta+1))}_{l_{1}} \ldots F^{(k(\Delta+1))}_{l_{0}+2} F^{(k(\Delta+1))}_{l_{0}+1})
\ldots
(F^{(k(\Delta+1))}_{l_{y}} \ldots F^{(k(\Delta+1))}_{l_{y-1}+2} F^{(k(\Delta+1))}_{l_{y-1}+1}) \btau.\end{equation}
Again, this must consist of all abacus configurations that can be obtained by choosing $k(\Delta+1)$ beads each of which lies on runner $0$ of some abacus and moving each of them forward $j_1$ positions to an empty position on runner $j_1$.  On abacus $s$, we can move at most $\delta_s+1$ such beads if $L(s,i-1)=1$ and $L(s,i)=0$ and $\delta_s = \Delta$; otherwise we can move at most $\delta_s$ beads.  If it is possible to move $k(\Delta+1) = k+ \sum_{s=1}^r \delta_i$ beads, that is if Equation~\ref{FirstInduction} is not zero, there  must be $k$ values of $1 \leq s \leq r$ satisfying $L(s,i-1)=1$ and $L(s,i)=0$ and $\delta_s=\Delta$.  

So suppose that $\btau$ is such that we have $1 \leq r_1 < r_2 < \ldots < r_k \leq r$ with $L(r_a,i-1)=1$ and $L(r_a,i)=0$ and $\delta_{s_a}=\Delta$ for all $1 \leq a \leq k$.  Then $\Ab(\btau)$ is formed from $\Ab(\bsig)$ by moving the last $\Delta$ beads on runner $j_1$ of abacuses $r_1,r_2,\ldots,r_k$ along $e-j_1$ positions to runner $0$.  The only multipartition that then occurs in Equation~\ref{FirstInduction} corresponds to the multipartition formed by then moving the last $\Delta+1$ beads on runner $0$ of abacus $r_a$ along $j_1$ positions to runner $j_1$ for each $1 \leq a \leq k$.    

In short, the multipartitions which occur on the right hand side of Equation~\ref{IndSeqGiven} are exactly those formed by choosing $1 \leq r_1 < r_2 < \ldots < r_k \leq r$ with $L(r_a,i-1)=1$ and $L(r_a,i)=0$ and moving a bead in each abacus configuration $\Ab(\bsig^{(r_a)})$ from the last full position on runner 0 to the first empty position on runner $j_1$.    Hence we have shown that Equation~\ref{IndSeqGiven} is equal to
$ \sum_{L \xrightarrow{i:k} M} v^{\alpha(L,M)} \Pt(B,M)$
for some $\alpha(L,M) \in \Z$ and so it only remains to check that $\alpha(L,M)=N_i(L,M)$ for each $L \xrightarrow{i:k} M$.  Choose such an $M$, let $\blam = \Pt(B,M)$ and suppose that $\Ab(\blam)$ was formed from $\Ab(\bsig)$ by moving beads in components $r_1,r_2,\ldots,r_k$. 
Over the course of the induction, the addable and removable nodes that contribute to $s(L,M)$ almost all cancel.  If $s \neq r_{a'}$ for any $a'$ and $L(s,0)=1$ and $L(s,j_1)=1$ then each $1 \leq a \leq r$ with $r_a <s$ contributes $+1$ towards $\alpha(L,M)$.   If $s \neq r_{a'}$ for any $a'$ and $L(s,0)=0$ and $L(s,j_1)=0$ then each $1 \leq a \leq r$ with $r_a <s$ contributes $-1$ towards $\alpha(L,M)$.  All other contributions match up to sum to zero.  Hence $\alpha(L,M)=N_i(L,M)$ as required.
\end{proof}

\begin{ex}\label{IndEx}
Let $B=(1,0,3,2,2)$ and $M=\begin{pmatrix} 
0&1&0&0&0 \\
1&0&0&1&1 \\
1&1&0&0&0 \\ 
\end{pmatrix}$ as in Example~\ref{AbacusEx} and let $\bsig=\Pt(B,M)$.    Let $i=2$ and $k=1$ so that $j_0 = 0$ and $j_1=3$.  
In terms of matrices we have
\[G^{(1)}_2 \begin{pmatrix} 
0&1&0&0&0 \\
1&0&0&1&1 \\
1&1&0&0&0 \\ 
\end{pmatrix} =  \begin{pmatrix} 
0&1&0&0&0 \\
1&0&0&1&1 \\
1&0&1&0&0 \\ 
\end{pmatrix} + v
 \begin{pmatrix} 
0&0&1&0&0 \\
1&0&0&1&1 \\
1&1&0&0&0 \\ 
\end{pmatrix}.\]
Identifying multipartitions with their abacus configurations we have $\bsig$ given by  
\iffancy{
\begin{center}
\begin{tikzpicture}\tikzset{yscale=0.4,xscale=0.4}
\begin{scope}[draw=brown!20!black]
\fill[color =brown] (-1,6)--(-1,6.2)--(4.4,6.2)--(4.4,6)--cycle;
\shade[top color=brown!20!white, bottom color =brown] (0,6.7)--(-1,6.2)--(4.4,6.2)--(5.4,6.7)--cycle;
\shade[top color=brown!20!white, bottom color =brown] (4.4,6)--(4.4,6.2)--(5.4,6.7)--(5.4,6.5)--cycle;
\end{scope}
\foreach \k in {0,1,2,3,4} {
\shadedraw [draw=brown, bottom color=brown!90!black, top color=brown!70!white] (\k-.1,2)--(\k-.1,6) -- (\k+0.1,6)-- (\k+.1,2)--(\k-.1,2);
\filldraw[color=brown,fill=brown!90!black](\k-.1,1.7)--(\k-.1,1.9)--(\k+.1,1.9)--(\k+.1,1.7)--(\k-.1,1.7);
\filldraw[color=brown,fill=brown!95!black](\k-.1,1.4)--(\k-.1,1.6)--(\k+.1,1.6)--(\k+.1,1.4)--(\k-.1,1.4);
}
\foreach \k in {4.5,5.5} {
\shade[ball color=green] (0,\k) circle (10pt);}
\foreach \k in {} {
\shade[ball color=green] (1,\k) circle (10pt);}
\foreach \k in {3.5,4.5,5.5} {
\shade[ball color=green] (2,\k) circle (10pt);}
\foreach \k in {4.5,5.5} {
\shade[ball color=green] (3,\k) circle (10pt);}
\foreach \k in {4.5,5.5} {
\shade[ball color=green] (4,\k) circle (10pt);}
\end{tikzpicture}
\begin{tikzpicture}\tikzset{yscale=0.4,xscale=0.4}
\begin{scope}[draw=brown!20!black]
\fill[color =brown] (-1,6)--(-1,6.2)--(4.4,6.2)--(4.4,6)--cycle;
\shade[top color=brown!20!white, bottom color =brown] (0,6.7)--(-1,6.2)--(4.4,6.2)--(5.4,6.7)--cycle;
\shade[top color=brown!20!white, bottom color =brown] (4.4,6)--(4.4,6.2)--(5.4,6.7)--(5.4,6.5)--cycle;
\end{scope}
\foreach \k in {0,1,2,3,4} {
\shadedraw [draw=brown, bottom color=brown!90!black, top color=brown!70!white] (\k-.1,2)--(\k-.1,6) -- (\k+0.1,6)-- (\k+.1,2)--(\k-.1,2);
\filldraw[color=brown,fill=brown!90!black](\k-.1,1.7)--(\k-.1,1.9)--(\k+.1,1.9)--(\k+.1,1.7)--(\k-.1,1.7);
\filldraw[color=brown,fill=brown!95!black](\k-.1,1.4)--(\k-.1,1.6)--(\k+.1,1.6)--(\k+.1,1.4)--(\k-.1,1.4);
}
\foreach \k in {5.5} {
\shade[ball color=green] (0,\k) circle (10pt);}
\foreach \k in {5.5} {
\shade[ball color=green] (1,\k) circle (10pt);}
\foreach \k in {2.5,3.5,4.5,5.5} {
\shade[ball color=green] (2,\k) circle (10pt);}
\foreach \k in {4.5,5.5} {
\shade[ball color=green] (3,\k) circle (10pt);}
\foreach \k in {3.5,4.5,5.5} {
\shade[ball color=green] (4,\k) circle (10pt);}
\end{tikzpicture}
\begin{tikzpicture}\tikzset{yscale=0.4,xscale=0.4}
\begin{scope}[draw=brown!20!black]
\fill[color =brown] (-1,6)--(-1,6.2)--(4.4,6.2)--(4.4,6)--cycle;
\shade[top color=brown!20!white, bottom color =brown] (0,6.7)--(-1,6.2)--(4.4,6.2)--(5.4,6.7)--cycle;
\shade[top color=brown!20!white, bottom color =brown] (4.4,6)--(4.4,6.2)--(5.4,6.7)--(5.4,6.5)--cycle;
\end{scope}
\foreach \k in {0,1,2,3,4} {
\shadedraw [draw=brown, bottom color=brown!90!black, top color=brown!70!white] (\k-.1,2)--(\k-.1,6) -- (\k+0.1,6)-- (\k+.1,2)--(\k-.1,2);
\filldraw[color=brown,fill=brown!90!black](\k-.1,1.7)--(\k-.1,1.9)--(\k+.1,1.9)--(\k+.1,1.7)--(\k-.1,1.7);
\filldraw[color=brown,fill=brown!95!black](\k-.1,1.4)--(\k-.1,1.6)--(\k+.1,1.6)--(\k+.1,1.4)--(\k-.1,1.4);
}
\foreach \k in {4.5,5.5} {
\shade[ball color=green] (0,\k) circle (10pt);}
\foreach \k in {5.5} {
\shade[ball color=green] (1,\k) circle (10pt);}
\foreach \k in {3.5,4.5,5.5} {
\shade[ball color=green] (2,\k) circle (10pt);}
\foreach \k in {4.5,5.5} {
\shade[ball color=green] (3,\k) circle (10pt);}
\foreach \k in {4.5,5.5} {
\shade[ball color=green] (4,\k) circle (10pt);}
\end{tikzpicture}
\end{center}

 and $F^{(2)}_2 F^{(2)}_1 F^{(2)}_3 F^{(1)}_4 F^{(1)}_0 \bsig$ given by 

\begin{center}
\begin{tikzpicture}\tikzset{yscale=0.4,xscale=0.4}
\begin{scope}[draw=brown!20!black]
\fill[color =brown] (-1,6)--(-1,6.2)--(4.4,6.2)--(4.4,6)--cycle;
\shade[top color=brown!20!white, bottom color =brown] (0,6.7)--(-1,6.2)--(4.4,6.2)--(5.4,6.7)--cycle;
\shade[top color=brown!20!white, bottom color =brown] (4.4,6)--(4.4,6.2)--(5.4,6.7)--(5.4,6.5)--cycle;
\end{scope}
\foreach \k in {0,1,2,3,4} {
\shadedraw [draw=brown, bottom color=brown!90!black, top color=brown!70!white] (\k-.1,2)--(\k-.1,6) -- (\k+0.1,6)-- (\k+.1,2)--(\k-.1,2);
\filldraw[color=brown,fill=brown!90!black](\k-.1,1.7)--(\k-.1,1.9)--(\k+.1,1.9)--(\k+.1,1.7)--(\k-.1,1.7);
\filldraw[color=brown,fill=brown!95!black](\k-.1,1.4)--(\k-.1,1.6)--(\k+.1,1.6)--(\k+.1,1.4)--(\k-.1,1.4);
}
\foreach \k in {4.5,5.5} {
\shade[ball color=green] (0,\k) circle (10pt);}
\foreach \k in {} {
\shade[ball color=green] (1,\k) circle (10pt);}
\foreach \k in {3.5,4.5,5.5} {
\shade[ball color=green] (2,\k) circle (10pt);}
\foreach \k in {4.5,5.5} {
\shade[ball color=green] (3,\k) circle (10pt);}
\foreach \k in {4.5,5.5} {
\shade[ball color=green] (4,\k) circle (10pt);}
\end{tikzpicture}
\begin{tikzpicture}\tikzset{yscale=0.4,xscale=0.4}
\begin{scope}[draw=brown!20!black]
\fill[color =brown] (-1,6)--(-1,6.2)--(4.4,6.2)--(4.4,6)--cycle;
\shade[top color=brown!20!white, bottom color =brown] (0,6.7)--(-1,6.2)--(4.4,6.2)--(5.4,6.7)--cycle;
\shade[top color=brown!20!white, bottom color =brown] (4.4,6)--(4.4,6.2)--(5.4,6.7)--(5.4,6.5)--cycle;
\end{scope}
\foreach \k in {0,1,2,3,4} {
\shadedraw [draw=brown, bottom color=brown!90!black, top color=brown!70!white] (\k-.1,2)--(\k-.1,6) -- (\k+0.1,6)-- (\k+.1,2)--(\k-.1,2);
\filldraw[color=brown,fill=brown!90!black](\k-.1,1.7)--(\k-.1,1.9)--(\k+.1,1.9)--(\k+.1,1.7)--(\k-.1,1.7);
\filldraw[color=brown,fill=brown!95!black](\k-.1,1.4)--(\k-.1,1.6)--(\k+.1,1.6)--(\k+.1,1.4)--(\k-.1,1.4);
}
\foreach \k in {5.5} {
\shade[ball color=green] (0,\k) circle (10pt);}
\foreach \k in {5.5} {
\shade[ball color=green] (1,\k) circle (10pt);}
\foreach \k in {2.5,3.5,4.5,5.5} {
\shade[ball color=green] (2,\k) circle (10pt);}
\foreach \k in {4.5,5.5} {
\shade[ball color=green] (3,\k) circle (10pt);}
\foreach \k in {3.5,4.5,5.5} {
\shade[ball color=green] (4,\k) circle (10pt);}
\end{tikzpicture}
\begin{tikzpicture}\tikzset{yscale=0.4,xscale=0.4}
\begin{scope}[draw=brown!20!black]
\fill[color =brown] (-1,6)--(-1,6.2)--(4.4,6.2)--(4.4,6)--cycle;
\shade[top color=brown!20!white, bottom color =brown] (0,6.7)--(-1,6.2)--(4.4,6.2)--(5.4,6.7)--cycle;
\shade[top color=brown!20!white, bottom color =brown] (4.4,6)--(4.4,6.2)--(5.4,6.7)--(5.4,6.5)--cycle;
\end{scope}
\foreach \k in {0,1,2,3,4} {
\shadedraw [draw=brown, bottom color=brown!90!black, top color=brown!70!white] (\k-.1,2)--(\k-.1,6) -- (\k+0.1,6)-- (\k+.1,2)--(\k-.1,2);
\filldraw[color=brown,fill=brown!90!black](\k-.1,1.7)--(\k-.1,1.9)--(\k+.1,1.9)--(\k+.1,1.7)--(\k-.1,1.7);
\filldraw[color=brown,fill=brown!95!black](\k-.1,1.4)--(\k-.1,1.6)--(\k+.1,1.6)--(\k+.1,1.4)--(\k-.1,1.4);
}
\foreach \k in {5.5} {
\shade[ball color=green] (0,\k) circle (10pt);}
\foreach \k in {5.5} {
\shade[ball color=green] (1,\k) circle (10pt);}
\foreach \k in {3.5,4.5,5.5} {
\shade[ball color=green] (2,\k) circle (10pt);}
\foreach \k in {3.5,4.5,5.5} {
\shade[ball color=green] (3,\k) circle (10pt);}
\foreach \k in {4.5,5.5} {
\shade[ball color=green] (4,\k) circle (10pt);}
\end{tikzpicture}
\raisebox{0.8cm}{$+v$} \begin{tikzpicture}\tikzset{yscale=0.4,xscale=0.4}
\begin{scope}[draw=brown!20!black]
\fill[color =brown] (-1,6)--(-1,6.2)--(4.4,6.2)--(4.4,6)--cycle;
\shade[top color=brown!20!white, bottom color =brown] (0,6.7)--(-1,6.2)--(4.4,6.2)--(5.4,6.7)--cycle;
\shade[top color=brown!20!white, bottom color =brown] (4.4,6)--(4.4,6.2)--(5.4,6.7)--(5.4,6.5)--cycle;
\end{scope}
\foreach \k in {0,1,2,3,4} {
\shadedraw [draw=brown, bottom color=brown!90!black, top color=brown!70!white] (\k-.1,2)--(\k-.1,6) -- (\k+0.1,6)-- (\k+.1,2)--(\k-.1,2);
\filldraw[color=brown,fill=brown!90!black](\k-.1,1.7)--(\k-.1,1.9)--(\k+.1,1.9)--(\k+.1,1.7)--(\k-.1,1.7);
\filldraw[color=brown,fill=brown!95!black](\k-.1,1.4)--(\k-.1,1.6)--(\k+.1,1.6)--(\k+.1,1.4)--(\k-.1,1.4);
}
\foreach \k in {5.5} {
\shade[ball color=green] (0,\k) circle (10pt);}
\foreach \k in {} {
\shade[ball color=green] (1,\k) circle (10pt);}
\foreach \k in {3.5,4.5,5.5} {
\shade[ball color=green] (2,\k) circle (10pt);}
\foreach \k in {3.5,4.5,5.5} {
\shade[ball color=green] (3,\k) circle (10pt);}
\foreach \k in {4.5,5.5} {
\shade[ball color=green] (4,\k) circle (10pt);}
\end{tikzpicture}
\begin{tikzpicture}\tikzset{yscale=0.4,xscale=0.4}
\begin{scope}[draw=brown!20!black]
\fill[color =brown] (-1,6)--(-1,6.2)--(4.4,6.2)--(4.4,6)--cycle;
\shade[top color=brown!20!white, bottom color =brown] (0,6.7)--(-1,6.2)--(4.4,6.2)--(5.4,6.7)--cycle;
\shade[top color=brown!20!white, bottom color =brown] (4.4,6)--(4.4,6.2)--(5.4,6.7)--(5.4,6.5)--cycle;
\end{scope}
\foreach \k in {0,1,2,3,4} {
\shadedraw [draw=brown, bottom color=brown!90!black, top color=brown!70!white] (\k-.1,2)--(\k-.1,6) -- (\k+0.1,6)-- (\k+.1,2)--(\k-.1,2);
\filldraw[color=brown,fill=brown!90!black](\k-.1,1.7)--(\k-.1,1.9)--(\k+.1,1.9)--(\k+.1,1.7)--(\k-.1,1.7);
\filldraw[color=brown,fill=brown!95!black](\k-.1,1.4)--(\k-.1,1.6)--(\k+.1,1.6)--(\k+.1,1.4)--(\k-.1,1.4);
}
\foreach \k in {5.5} {
\shade[ball color=green] (0,\k) circle (10pt);}
\foreach \k in {5.5} {
\shade[ball color=green] (1,\k) circle (10pt);}
\foreach \k in {2.5,3.5,4.5,5.5} {
\shade[ball color=green] (2,\k) circle (10pt);}
\foreach \k in {4.5,5.5} {
\shade[ball color=green] (3,\k) circle (10pt);}
\foreach \k in {3.5,4.5,5.5} {
\shade[ball color=green] (4,\k) circle (10pt);}
\end{tikzpicture}
\begin{tikzpicture}\tikzset{yscale=0.4,xscale=0.4}
\begin{scope}[draw=brown!20!black]
\fill[color =brown] (-1,6)--(-1,6.2)--(4.4,6.2)--(4.4,6)--cycle;
\shade[top color=brown!20!white, bottom color =brown] (0,6.7)--(-1,6.2)--(4.4,6.2)--(5.4,6.7)--cycle;
\shade[top color=brown!20!white, bottom color =brown] (4.4,6)--(4.4,6.2)--(5.4,6.7)--(5.4,6.5)--cycle;
\end{scope}
\foreach \k in {0,1,2,3,4} {
\shadedraw [draw=brown, bottom color=brown!90!black, top color=brown!70!white] (\k-.1,2)--(\k-.1,6) -- (\k+0.1,6)-- (\k+.1,2)--(\k-.1,2);
\filldraw[color=brown,fill=brown!90!black](\k-.1,1.7)--(\k-.1,1.9)--(\k+.1,1.9)--(\k+.1,1.7)--(\k-.1,1.7);
\filldraw[color=brown,fill=brown!95!black](\k-.1,1.4)--(\k-.1,1.6)--(\k+.1,1.6)--(\k+.1,1.4)--(\k-.1,1.4);
}
\foreach \k in {4.5,5.5} {
\shade[ball color=green] (0,\k) circle (10pt);}
\foreach \k in {5.5} {
\shade[ball color=green] (1,\k) circle (10pt);}
\foreach \k in {3.5,4.5,5.5} {
\shade[ball color=green] (2,\k) circle (10pt);}
\foreach \k in {4.5,5.5} {
\shade[ball color=green] (3,\k) circle (10pt);}
\foreach \k in {4.5,5.5} {
\shade[ball color=green] (4,\k) circle (10pt);}
\end{tikzpicture}
\end{center}
}
\else{
\begin{center}
\begin{tikzpicture}\tikzset{yscale=0.4,xscale=0.4}
\begin{scope}[fill=brown]
\fill(-1,6)--(-1,6.2)--(4.4,6.2)--(4.4,6)--cycle;
\fill (0,6.7)--(-1,6.2)--(4.4,6.2)--(5.4,6.7)--cycle;
\fill(4.4,6)--(4.4,6.2)--(5.4,6.7)--(5.4,6.5)--cycle;
\end{scope}
\foreach \k in {0,1,2,3,4} {
\fill[color=brown] (\k-.1,2)--(\k-.1,6) -- (\k+0.1,6)-- (\k+.1,2)--(\k-.1,2);
\fill[color=brown](\k-.1,1.9)--(\k+.1,1.9)--(\k+.1,1.7)--(\k-.1,1.7)--(\k-.1,1.9);
\fill[color=brown](\k-.1,1.6)--(\k+.1,1.6)--(\k+.1,1.4)--(\k-.1,1.4)--(\k-.1,1.6);
}
\foreach \k in {4.5,5.5} {
\fill[color=bead] (0,\k) circle (10pt);}
\foreach \k in {} {
\fill[color=bead] (1,\k) circle (10pt);}
\foreach \k in {3.5,4.5,5.5} {
\fill[color=bead] (2,\k) circle (10pt);}
\foreach \k in {4.5,5.5} {
\fill[color=bead] (3,\k) circle (10pt);}
\foreach \k in {4.5,5.5} {
\fill[color=bead] (4,\k) circle (10pt);}
\end{tikzpicture}
\begin{tikzpicture}\tikzset{yscale=0.4,xscale=0.4}
\begin{scope}[fill=brown]
\fill(-1,6)--(-1,6.2)--(4.4,6.2)--(4.4,6)--cycle;
\fill(0,6.7)--(-1,6.2)--(4.4,6.2)--(5.4,6.7)--cycle;
\fill(4.4,6)--(4.4,6.2)--(5.4,6.7)--(5.4,6.5)--cycle;
\end{scope}
\foreach \k in {0,1,2,3,4} {
\fill[color=brown](\k-.1,2)--(\k-.1,6) -- (\k+0.1,6)-- (\k+.1,2)--(\k-.1,2);
\fill[color=brown](\k-.1,1.9)--(\k+.1,1.9)--(\k+.1,1.7)--(\k-.1,1.7)--(\k-.1,1.9);
\fill[color=brown](\k-.1,1.6)--(\k+.1,1.6)--(\k+.1,1.4)--(\k-.1,1.4)--(\k-.1,1.6);
}
\foreach \k in {5.5} {
\fill[color=bead] (0,\k) circle (10pt);}
\foreach \k in {5.5} {
\fill[color=bead] (1,\k) circle (10pt);}
\foreach \k in {2.5,3.5,4.5,5.5} {
\fill[color=bead] (2,\k) circle (10pt);}
\foreach \k in {4.5,5.5} {
\fill[color=bead] (3,\k) circle (10pt);}
\foreach \k in {3.5,4.5,5.5} {
\fill[color=bead] (4,\k) circle (10pt);}
\end{tikzpicture}
\begin{tikzpicture}\tikzset{yscale=0.4,xscale=0.4}
\begin{scope}[fill=brown]
\fill(-1,6)--(-1,6.2)--(4.4,6.2)--(4.4,6)--cycle;
\fill(0,6.7)--(-1,6.2)--(4.4,6.2)--(5.4,6.7)--cycle;
\fill(4.4,6)--(4.4,6.2)--(5.4,6.7)--(5.4,6.5)--cycle;
\end{scope}
\foreach \k in {0,1,2,3,4} {
\fill[color=brown] (\k-.1,2)--(\k-.1,6) -- (\k+0.1,6)-- (\k+.1,2)--(\k-.1,2);
\fill[color=brown](\k-.1,1.9)--(\k+.1,1.9)--(\k+.1,1.7)--(\k-.1,1.7)--(\k-.1,1.9);
\fill[color=brown](\k-.1,1.6)--(\k+.1,1.6)--(\k+.1,1.4)--(\k-.1,1.4)--(\k-.1,1.6);
}
\foreach \k in {4.5,5.5} {
\fill[color=bead] (0,\k) circle (10pt);}
\foreach \k in {5.5} {
\fill[color=bead] (1,\k) circle (10pt);}
\foreach \k in {3.5,4.5,5.5} {
\fill[color=bead] (2,\k) circle (10pt);}
\foreach \k in {4.5,5.5} {
\fill[color=bead] (3,\k) circle (10pt);}
\foreach \k in {4.5,5.5} {
\fill[color=bead] (4,\k) circle (10pt);}
\end{tikzpicture}
\end{center}
 and $F^{(2)}_2 F^{(2)}_1 F^{(2)}_3 F^{(1)}_4 F^{(1)}_0 \bsig$ given by 

\begin{center}
\begin{tikzpicture}\tikzset{yscale=0.4,xscale=0.4}
\begin{scope}[fill=brown]
\fill (-1,6)--(-1,6.2)--(4.4,6.2)--(4.4,6)--cycle;
\fill (0,6.7)--(-1,6.2)--(4.4,6.2)--(5.4,6.7)--cycle;
\fill (4.4,6)--(4.4,6.2)--(5.4,6.7)--(5.4,6.5)--cycle;
\end{scope}
\foreach \k in {0,1,2,3,4} {
\fill[color=brown] (\k-.1,2)--(\k-.1,6) -- (\k+0.1,6)-- (\k+.1,2)--(\k-.1,2);
\fill[color=brown](\k-.1,1.9)--(\k+.1,1.9)--(\k+.1,1.7)--(\k-.1,1.7)--(\k-.1,1.9);
\fill[color=brown](\k-.1,1.6)--(\k+.1,1.6)--(\k+.1,1.4)--(\k-.1,1.4)--(\k-.1,1.6);
}
\foreach \k in {4.5,5.5} {
\fill[color=bead] (0,\k) circle (10pt);}
\foreach \k in {} {
\fill[color=bead] (1,\k) circle (10pt);}
\foreach \k in {3.5,4.5,5.5} {
\fill[color=bead] (2,\k) circle (10pt);}
\foreach \k in {4.5,5.5} {
\fill[color=bead] (3,\k) circle (10pt);}
\foreach \k in {4.5,5.5} {
\fill[color=bead] (4,\k) circle (10pt);}
\end{tikzpicture}
\begin{tikzpicture}\tikzset{yscale=0.4,xscale=0.4}
\begin{scope}[fill=brown]
\fill (-1,6)--(-1,6.2)--(4.4,6.2)--(4.4,6)--cycle;
\fill(0,6.7)--(-1,6.2)--(4.4,6.2)--(5.4,6.7)--cycle;
\fill (4.4,6)--(4.4,6.2)--(5.4,6.7)--(5.4,6.5)--cycle;
\end{scope}
\foreach \k in {0,1,2,3,4} {
\fill[color=brown] (\k-.1,2)--(\k-.1,6) -- (\k+0.1,6)-- (\k+.1,2)--(\k-.1,2);
\fill[color=brown](\k-.1,1.9)--(\k+.1,1.9)--(\k+.1,1.7)--(\k-.1,1.7)--(\k-.1,1.9);
\fill[color=brown](\k-.1,1.6)--(\k+.1,1.6)--(\k+.1,1.4)--(\k-.1,1.4)--(\k-.1,1.6);
}
\foreach \k in {5.5} {
\fill[color=bead] (0,\k) circle (10pt);}
\foreach \k in {5.5} {
\fill[color=bead] (1,\k) circle (10pt);}
\foreach \k in {2.5,3.5,4.5,5.5} {
\fill[color=bead] (2,\k) circle (10pt);}
\foreach \k in {4.5,5.5} {
\fill[color=bead] (3,\k) circle (10pt);}
\foreach \k in {3.5,4.5,5.5} {
\fill[color=bead] (4,\k) circle (10pt);}
\end{tikzpicture}
\begin{tikzpicture}\tikzset{yscale=0.4,xscale=0.4}
\begin{scope}[fill=brown]
\fill(-1,6)--(-1,6.2)--(4.4,6.2)--(4.4,6)--cycle;
\fill(0,6.7)--(-1,6.2)--(4.4,6.2)--(5.4,6.7)--cycle;
\fill (4.4,6)--(4.4,6.2)--(5.4,6.7)--(5.4,6.5)--cycle;
\end{scope}
\foreach \k in {0,1,2,3,4} {
\fill[color=brown] (\k-.1,2)--(\k-.1,6) -- (\k+0.1,6)-- (\k+.1,2)--(\k-.1,2);
\fill[color=brown](\k-.1,1.9)--(\k+.1,1.9)--(\k+.1,1.7)--(\k-.1,1.7)--(\k-.1,1.9);
\fill[color=brown](\k-.1,1.6)--(\k+.1,1.6)--(\k+.1,1.4)--(\k-.1,1.4)--(\k-.1,1.6);
}
\foreach \k in {5.5} {
\fill[color=bead] (0,\k) circle (10pt);}
\foreach \k in {5.5} {
\fill[color=bead] (1,\k) circle (10pt);}
\foreach \k in {3.5,4.5,5.5} {
\fill[color=bead] (2,\k) circle (10pt);}
\foreach \k in {3.5,4.5,5.5} {
\fill[color=bead] (3,\k) circle (10pt);}
\foreach \k in {4.5,5.5} {
\fill[color=bead] (4,\k) circle (10pt);}
\end{tikzpicture} 
\raisebox{0.8cm}{$+v$} \begin{tikzpicture}\tikzset{yscale=0.4,xscale=0.4}
\begin{scope}[fill=brown]
\fill(-1,6)--(-1,6.2)--(4.4,6.2)--(4.4,6)--cycle;
\fill (0,6.7)--(-1,6.2)--(4.4,6.2)--(5.4,6.7)--cycle;
\fill (4.4,6)--(4.4,6.2)--(5.4,6.7)--(5.4,6.5)--cycle;
\end{scope}
\foreach \k in {0,1,2,3,4} {
\fill[color=brown] (\k-.1,2)--(\k-.1,6) -- (\k+0.1,6)-- (\k+.1,2)--(\k-.1,2);
\fill[color=brown](\k-.1,1.9)--(\k+.1,1.9)--(\k+.1,1.7)--(\k-.1,1.7)--(\k-.1,1.9);
\fill[color=brown](\k-.1,1.6)--(\k+.1,1.6)--(\k+.1,1.4)--(\k-.1,1.4)--(\k-.1,1.6);
}
\foreach \k in {5.5} {
\fill[color=bead] (0,\k) circle (10pt);}
\foreach \k in {} {
\fill[color=bead] (1,\k) circle (10pt);}
\foreach \k in {3.5,4.5,5.5} {
\fill[color=bead] (2,\k) circle (10pt);}
\foreach \k in {3.5,4.5,5.5} {
\fill[color=bead] (3,\k) circle (10pt);}
\foreach \k in {4.5,5.5} {
\fill[color=bead] (4,\k) circle (10pt);}
\end{tikzpicture}
\begin{tikzpicture}\tikzset{yscale=0.4,xscale=0.4}
\begin{scope}[fill=brown]
\fill (-1,6)--(-1,6.2)--(4.4,6.2)--(4.4,6)--cycle;
\fill (0,6.7)--(-1,6.2)--(4.4,6.2)--(5.4,6.7)--cycle;
\fill (4.4,6)--(4.4,6.2)--(5.4,6.7)--(5.4,6.5)--cycle;
\end{scope}
\foreach \k in {0,1,2,3,4} {
\fill[color=brown](\k-.1,2)--(\k-.1,6) -- (\k+0.1,6)-- (\k+.1,2)--(\k-.1,2);
\fill[color=brown](\k-.1,1.9)--(\k+.1,1.9)--(\k+.1,1.7)--(\k-.1,1.7)--(\k-.1,1.9);
\fill[color=brown](\k-.1,1.6)--(\k+.1,1.6)--(\k+.1,1.4)--(\k-.1,1.4)--(\k-.1,1.6);
}
\foreach \k in {5.5} {
\fill[color=bead] (0,\k) circle (10pt);}
\foreach \k in {5.5} {
\fill[color=bead] (1,\k) circle (10pt);}
\foreach \k in {2.5,3.5,4.5,5.5} {
\fill[color=bead] (2,\k) circle (10pt);}
\foreach \k in {4.5,5.5} {
\fill[color=bead] (3,\k) circle (10pt);}
\foreach \k in {3.5,4.5,5.5} {
\fill[color=bead] (4,\k) circle (10pt);}
\end{tikzpicture}
\begin{tikzpicture}\tikzset{yscale=0.4,xscale=0.4}
\begin{scope}[fill=brown]
\fill (-1,6)--(-1,6.2)--(4.4,6.2)--(4.4,6)--cycle;
\fill (0,6.7)--(-1,6.2)--(4.4,6.2)--(5.4,6.7)--cycle;
\fill (4.4,6)--(4.4,6.2)--(5.4,6.7)--(5.4,6.5)--cycle;
\end{scope}
\foreach \k in {0,1,2,3,4} {
\fill[color=brown] (\k-.1,2)--(\k-.1,6) -- (\k+0.1,6)-- (\k+.1,2)--(\k-.1,2);
\fill[color=brown](\k-.1,1.9)--(\k+.1,1.9)--(\k+.1,1.7)--(\k-.1,1.7)--(\k-.1,1.9);
\fill[color=brown](\k-.1,1.6)--(\k+.1,1.6)--(\k+.1,1.4)--(\k-.1,1.4)--(\k-.1,1.6);
}
\foreach \k in {4.5,5.5} {
\fill[color=bead] (0,\k) circle (10pt);}
\foreach \k in {5.5} {
\fill[color=bead] (1,\k) circle (10pt);}
\foreach \k in {3.5,4.5,5.5} {
\fill[color=bead] (2,\k) circle (10pt);}
\foreach \k in {4.5,5.5} {
\fill[color=bead] (3,\k) circle (10pt);}
\foreach \k in {4.5,5.5} {
\fill[color=bead] (4,\k) circle (10pt);}
\end{tikzpicture}
\end{center}
} \fi
\end{ex}

\begin{corollary} \label{PutTogether}
Suppose that $\bsig = \Pt(B,L)$ and that $\wt(\bsig)=0$.  
Suppose that there exist $i_1,i_2,\ldots,i_m \in \Ind$ and $k_1,k_2,\ldots,k_m$ with $k_l>0$ for all $l$ such that  
\[G^{(k_m)}_{i_m} \ldots G^{(k_2)}_{i_1} G^{(k_1)}_{i_1} L = K+\sum_{M \in \M \setminus \{K\}} a_{M}(v) M\] 
where $a_{M}(v) \in v\N[v]$.  Then $\blam = \Pt(B,K)$ is a Kleshchev multipartition and if $\bmu \neq \blam$ then 
\[d_{\blam\bmu}(v) = \begin{cases}
a_{M}(v), & \bmu =\Pt(B,M) \text{ for some } M \in \M, \\
0, & \text{otherwise}.  
\end{cases}\]
\end{corollary}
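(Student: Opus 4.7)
The plan is to combine Proposition~\ref{iInd} with iterated applications of Proposition~\ref{NiceInd} to translate the matrix-level sequence of $G$-operations into a multipartition-level sequence of Chevalley operators, from which the decomposition numbers can be read off directly. The first hypothesis of Proposition~\ref{iInd} is immediate: since $\wt(\bsig)=0$, Proposition~\ref{weightfacts}(1) gives $P(\bsig)=\bsig$.

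The key observation for the translation is that the Chevalley-operator sequence produced by Proposition~\ref{NiceInd} depends only on the base tuple $B$ and on the pair $(i,k)$, not on the matrix $L$ to which $G^{(k)}_i$ is being applied. Hence the translation extends by $\mathbb{C}(v)$-linearity: for any formal linear combination $\sum c_M \Pt(B,M)$ of multipartitions in the block of $\bsig$, the $F$-sequence associated to $(i',k',B)$ sends it to the combination corresponding to $\sum c_M\, G^{(k')}_{i'} M$. Applying this observation inductively along the given sequence $G^{(k_m)}_{i_m}\cdots G^{(k_1)}_{i_1}$ yields indices $j_1,\ldots,j_s \in \Z/e\Z$ and positive integers $n_1,\ldots,n_s$ such that
\[ F^{(n_s)}_{j_s}\cdots F^{(n_1)}_{j_1}\bsig = \Pt(B,K) + \sum_{M \in \M \setminus \{K\}} a_M(v)\,\Pt(B,M). \]

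To finish, I would invoke Proposition~\ref{iInd}. Crucially, the map $M \mapsto \Pt(B,M)$ with $B$ fixed is injective on all of $\M$, since the bead count on runner $i$ of abacus $s$ is $b_i + M(s,\pi^{-1}(i))$, and this recovers $M(s,\pi^{-1}(i)) \in \{0,1\}$ uniquely. Hence the multipartitions on the right-hand side are pairwise distinct; combined with $a_M(v) \in v\N[v]$, Proposition~\ref{iInd} yields that $\blam = \Pt(B,K)$ is Kleshchev and that $d_{\bmu\blam}(v)=a_M(v)$ whenever $\bmu=\Pt(B,M)$. If $\bmu$ is in the block of $\blam$ but of the form $\Pt(B,M')$ for some $M'$ not appearing in the sum (interpret $a_{M'}(v)=0$), or $\bmu$ is outside the block of $\blam$ (by Proposition~\ref{Block}), the decomposition number vanishes, matching the stated formula.

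The main obstacle is the iterative linearity argument in the second paragraph: one must verify that the same $F$-sequence from Proposition~\ref{NiceInd} can be applied simultaneously to every matrix appearing in an intermediate linear combination. This works precisely because the sequence depends only on $(B,i,k)$ and not on the individual matrix; once this independence is recognised, the corollary falls out of the cited results with essentially only bookkeeping remaining.
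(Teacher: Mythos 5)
Your proof is correct and follows the same route as the paper, which simply cites Proposition~\ref{iInd} and Proposition~\ref{NiceInd}; you have supplied the bookkeeping (linearity of the $F$-sequence, its independence of the matrix being acted on, and injectivity of $M \mapsto \Pt(B,M)$ for fixed $B$) that the paper leaves implicit.
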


\begin{proof}
This follows from Proposition~\ref{iInd} and Proposition~\ref{NiceInd}.  
\end{proof}

Above, we introduced the sets $\M$ and $\check\M$.  By Lemma~\ref{BMlamBijection} and Lemma~\ref{beadswap}, the core blocks may be described using only the matrices in $\check\M$.  However, in order to state and apply Proposition~\ref{NiceInd}, we need to use the matrices in $\M$.  
We now show that when inducing, as in Corollary~\ref{PutTogether} or otherwise, we can ignore columns of $L$ in which all entries are the same.  This is intuitively obvious when the column consists of zeros, and not difficult to show when there is a column consisting of ones.

\begin{lemma} \label{Ignore01}
Suppose that $L \in \M$.  
Let $i_1,i_2,\ldots,i_m \in \Ind$ and $k_1,k_2,\ldots,k_m$ with $k_l>0$ for all $l$ and suppose  
\[G^{(k_m)}_{i_m} \ldots G^{(k_2)}_{i_1} G^{(k_1)}_{i_1} L = \sum_{M \in \M} a_{M}(v) M\] 
where $a_{M}(v) \in \N[v^{-1},v]$.    
Let $\delta \in \{0,1\}$.  Let $\M^{\delta}$ be the set of matrices obtained by adding a column consisting entirely of entries equal to $\delta$ to the same position in every matrix in $\M$, with the bijection $K \leftrightarrow \bar{K}$ for $K \in \M$.  
If $e \geq 2$ (resp. \! $e=0$) there exist $\bar{i}_{1},\bar{i}_2,\ldots,\bar{i}_{\bar{m}} \in \IndOne$ (resp. $\Ind$) and $\bar{k}_1,\bar{k}_2,\ldots,\bar{k}_{\bar m}$ with $\bar{k}_l>0$ for all $l$
such that 
\[G^{(\bar{k}_{\bar{m}})}_{\bar{i}_{\bar{m}}} \ldots G^{(\bar{k}_2)}_{\bar{i}_1} G^{(\bar{k}_1)}_{\bar{i}_1} \bar{L} = \sum_{M \in \M} a_{M}(v) \bar{M}
.\] 
\end{lemma}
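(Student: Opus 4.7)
The approach is to reduce to inserting a single trivial column at a time and iterate. Suppose we insert one column (Case~A, all zeros, or Case~B, all ones) at new position $c'$, so that old column $j$ corresponds to new column $j$ for $j < c'$ and to new column $j+1$ for $j \geq c'$. I would define a translation of the original operator sequence $(G^{(k_l)}_{i_l})_{l=1}^m$ into a new sequence so that the combined action on $\bar{L}$ produces $\sum_M a_M(v)\bar{M}$.

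For $i_l \ne c'$ I would use $G^{(k_l)}_{i_l}$ (when $i_l < c'$) or $G^{(k_l)}_{i_l+1}$ (when $i_l > c'$). Such an operator acts on the shifted versions of the original pair of columns, so both the set of admissible row choices and the exponent $N_{i_l}$ transfer unchanged to the new setting: the inserted column is not touched. The only nontrivial case is $i_l = c'$, where the original operator crosses the inserted column.

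For $i_l = c'$ I would replace the single step by a two-step sequence: in Case~A use $G^{(k_l)}_{c'}$ followed by $G^{(k_l)}_{c'+1}$ (pushing $k_l$ beads into and then back out of the empty column); in Case~B use $G^{(k_l)}_{c'+1}$ followed by $G^{(k_l)}_{c'}$ (first vacating $k_l$ slots in the full column, then refilling them). A brief combinatorial argument shows that after the first step, the intermediate matrix $\bar{M}'$ forces the row set of the second step to coincide with that of the first; hence the two-step procedure produces a single final term per valid choice of rows, and the admissible row sets are exactly those for the original operator, namely $\{r_1,\ldots,r_{k_l}\}$ with $L(r_a,c'-1)=1$ and $L(r_a,c')=0$.

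The main obstacle is to verify that the $v$-exponents agree, i.e.
\[
N_{c'}(L,M) \;=\; N^{(1)}(\bar{L},\bar{M}') + N^{(2)}(\bar{M}',\bar{M})
\]
(and analogously in Case~B). I would do this by a row-by-row tabulation: for each $s > r_a$, consider the cases given by the values of $L(s,c'-1)$ and $L(s,c')$ together with whether $s \in \{r_b\}$, giving five nontrivial cases. In each, the total contribution to the new exponent matches the original. Two simplifications make this tractable: the inserted column contributes nothing to the removable-node term of $N^{(1)}$ or to the addable-node term of $N^{(2)}$, because its entries are unchanged before and after; and the preconditions $L(r_b,c'-1)=1$, $L(r_b,c')=0$ force several would-be terms to vanish. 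After this key computation the general lemma follows by iterating the single-column construction; the $e=0$ case (where indices range over $\mathbb{Z}$ and only Case~A arises) and the boundary positions $c' = 0$ or $c'=e$ involve no crossings and reduce to pure reindexing.
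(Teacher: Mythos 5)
Your proposal is correct and follows essentially the same route as the paper: the paper's proof consists precisely of the replacement rules you describe ($G^{(k_l)}_{i_l}\mapsto G^{(k_l)}_{i_l+1}$ when the inserted column lies to the left of column $i_l-1$, and the two-step factorisations $G^{(k_l)}_{i_l+1}G^{(k_l)}_{i_l}$ for a zero column and $G^{(k_l)}_{i_l}G^{(k_l)}_{i_l+1}$ for a ones column when it lies between columns $i_l-1$ and $i_l$), leaving implicit the verification of the row sets and exponents that you sketch. The only quibble is your closing parenthetical: when $e=0$ an all-ones column can also be inserted and crossed, so Case~B is not vacuous there, but your general two-step construction already covers it.
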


\begin{proof}
Let $1 \leq l \leq m$.  If the column added is before column $i_{l}-1$, replace $G^{(k_l)}_{i_l}$ with $G^{(k_l)}_{i_l+1}$.  If the column added is between columns $i_l-1$ and $i_l$ then replace $G^{(k_l)}_{i_l}$ with $G^{(k_l)}_{i_l+1}G^{(k_l)}_{i_{l}}$ if $\delta=0$ and with $G^{(k_l)}_{i_l}G^{(k_l)}_{i_{l}+1}$ if $\delta=1$. To see that the coefficients and matrices that we obtain match up, it is sufficient to consider $G^{(k)}_i L$ for $i \in \Ind$ and $k>0$.  
Unless the column inserted is between columns $i-1$ and $i$, the lemma follows from the definitions, so assume that this is the case.  Suppose first that $\delta=0$.  Given that
\[G^{(k)}_i L =\sum_{L \xrightarrow{i:k} M} v^{N_i(L,M)} M\] 
we need to show that \[G^{(k)}_{i+1} G^{(k)}_i \bar{L} =\sum_{L \xrightarrow{i:k} M}  v^{N_i(L,M)}\bar{M}.\]
Now $G^{(k)}_{i+1} G^{(k)}_i \bar{L}$ is a sum of matrices $N$ such that there exists a matrix $J$ with $\bar{L} \xrightarrow{i:k} J$ and $J \xrightarrow{i+1:k} N$.  That means that to get $J$, we move $k$ ones from column $i-1$ of $\bar{L}$ into column $i$ of $\bar{L}$; note that column $i$ of $\bar{L}$ contains only zeros.  There is then at most one matrix $N$ such that $J \xrightarrow{i+1:k} N$: we can move all $k$ ones in column $i$ of $J$ into column $i+1$ if and only if there are zeros in each of those rows in column $i+1$.  That is, the matrices $N$ which occur are formed by moving $k$ ones from column $i-1$ to column $i+1$, so that $N=\bar{M}$ where $L  \xrightarrow{i:k} M$.  
Suppose that the entries moved are in rows $r_1,r_2,\ldots,r_k$ of $L$.  
The coefficient of $\bar{M}$ in $G^{(k)}_{i+1} G^{(k)}_i \bar{L}$ is then 
\begin{align*}
N_i(\bar{L},J)+N_{i+1}(J,\bar{M}) &= \sum_{a=1}^k \#\{ s \mid s >r_a \text{ and } J(s,i-1)=1 \text{ and } J(s,i)=0\} \\
& \hspace*{1cm} - \#\{ s \mid s >r_a \text{ and } \bar{L}(s,i-1)=0 \text{ and } \bar{L}(s,i)=1\}  \\
& \hspace*{.5cm}+ \sum_{a=1}^k \#\{ s \mid s >r_a \text{ and } \bar{M}(s,i)=1 \text{ and } \bar{M}(s,i+1)=0\} \\
& \hspace*{1cm} - \#\{ s \mid s >r_a \text{ and } J(s,i)=0 \text{ and } J(s,i+1)=1\}  \\
 &= \sum_{a=1}^k \#\{ s \mid s >r_a \text{ and } J(s,i-1)=1 \text{ and } J(s,i)=0\} \\
& \hspace*{1cm} - \#\{ s \mid s >r_a \text{ and } J(s,i)=0 \text{ and } J(s,i+1)=1\}  \\
 &= \sum_{a=1}^k \#\{ s \mid s >r_a \text{ and } J(s,i-1)=1 \text{ and } J(s,i)=0 \text{ and } J(s,i+1)=0\} \\
& \hspace*{1cm} - \#\{ s \mid s >r_a \text{ and } J(s,i-1)=0 \text{ and } J(s,i)=0 \text{ and } J(s,i+1)=1\}  \\
& =\sum_{a=1}^k \#\{ s \mid s >r_a \text{ and } M(s,i-1)=1 \text{ and } M(s,i)=0\} \\
& \hspace*{1cm}- \#\{ s \mid s >r_a \text{ and } L(s,i-1)=0 \text{ and } L(s,i)=1\} \\ 
& = N_i(L,M)\end{align*}
as required.  The proof that if $\delta=1$ then \[G^{(k)}_{i} G^{(k)}_{i+1} \bar{L} =\sum_{L \xrightarrow{i:k} M} v^{N_i(L,M)} \bar{M}\]
is then similar.  
\end{proof}

If we can find decomposition numbers using Corollary~\ref{PutTogether}, then they are independent of the characteristic of the field.  In the next sections, we demonstrate some core blocks for which this is the case.  It is not true that decomposition numbers for core blocks are always independent of the field.  For example, if $e=0$ then all blocks are core blocks.  However, if $e=0$, $r=8$, $\mc=(4,4,3,3,2,2,1,1)$ and $\blam=((2),(2),(1),(1),(3),(3),(2),(2))$ then $\dim(D^\blam)$ is is smaller in characteristic 2 than in characteristic 0~\cite{Williamson}.  (We would like to thank Andrew Mathas for his translation of Williamson's example into the notation used in this paper.)  

\section{Indecomposable weight blocks of weight two} \label{S:Weight2}
In this section, we assume that $r=3$ and consider the indecomposable core blocks of weight $2$.  The aim of the section is to prove Theorem~\ref{MainTheorem2}, that is to give a closed formula for the decomposition numbers.  We begin by describing the Specht modules that appear in these blocks and determining which of them are indexed by Kleshchev multipartitions.  We then use Corollary~\ref{PutTogether} to compute the column of the decomposition matrix corresponding to each Kleshchev multipartition; these results are given in Theorem~\ref{MainTheorem}.  Using this theorem, we are able to prove Theorem~\ref{MainTheorem2}.  

If $\blam=\Pt(B,M)$ recall the relation between $\wt(M)$ and $\wt(\blam)$ described in Lemma~\ref{WeightMatch}; in particular recall that $\wt(\blam)$ is independent of $B$.  In Proposition~\ref{IndecompTwoBlocks} below, we want to find the matrices $M \in \check\M$ such that if $\blam=\Pt(B,M)$ then $S^\blam$ belongs to an indecomposable core block of weight $2$.  We assume $e \geq 2$.  
The notation is taken to mean that $M$ is a $3 \times (x+y+z+2)$-matrix, with $x$ columns equal to the entry below $x$ and so on, and columns appearing in any order.  The subscripts on $1_y$ and $1_z$ should be ignored for the moment.  Note that the matrices indexed by high superscripts are obtained from matrices directly above by rearranging the rows.

\begin{proposition} \label{IndecompTwoBlocks}
Suppose that $e \geq 2$.  Let $\blam = \Pt(B,M)$ be such that $S^\blam$ belongs to an indecomposable core block of weight $2$, where we may assume that $M \in \check\M$.  Then $M$ has one of the following forms, for some $x,y,z \geq 0$ satisfying $x+y+z+2 = e$:  

\[\begin{array}{c|ccccc|c|cccc|c|cccc|c} \cline{2-6} \cline{8-11} \cline{13-16}
&x&z&y&1_z&1_y&& x&z+1&y&1&& x&z&y+1&1 \\ \cline{2-6} \cline{8-11} \cline{13-16}
\gamma^1 = &0&0&0&0&1 &,\quad \alpha^1= & 0&0&0&1 &,\quad \beta^1=& 0&0&0&1& ,\quad \\
&0&0&1&1&0 && 0&0&1&1 && 0&0&1&0\\
&0&1&1&0&1 && 0&1&1&0 && 0&1&1&0 \\ \cline{2-6} \cline{8-11} \cline{13-16}
\end{array}\]

\[\begin{array}{c|ccccc|c|cccc|c|cccc|c} \cline{2-6} \cline{8-11} \cline{13-16}
&x&y&z&1_y&1_z&& x&y+1&z&1&& x&y&z+1&1 \\ \cline{2-6} \cline{8-11} \cline{13-16}
\gamma^2=&0&1&1&0&1 & ,\quad \beta^2=& 0&1&1&0 &,\quad \alpha^2=& 0&1&1&0&, \\
&0&0&0&0&1 & & 0&0&0&1 && 0&0&0&1 \\
&0&0&1&1&0 && 0&0&1&1 && 0&0&1&0\\
\cline{2-6} \cline{8-11} \cline{13-16}
\end{array}\]

\[\begin{array}{c|ccccc|c|cccc|c|cccc|c} \cline{2-6} \cline{8-11} \cline{13-16}
&x&z&y&1_z&1_y&& x&z+1&y&1&& x&z&y+1&1 \\ \cline{2-6} \cline{8-11} \cline{13-16}
\gamma^3=&0&0&1&1&0 &,\quad  \alpha^3=& 0&0&1&1 &,\quad \beta^3=& 0&0&1&0&,\\
&0&1&1&0&1 && 0&1&1&0 && 0&1&1&0 \\ 
&0&0&0&0&1 & & 0&0&0&1 && 0&0&0&1 \\\cline{2-6} \cline{8-11} \cline{13-16}
\end{array}\]

\[\begin{array}{c|ccccc|c|cccc|c|cccc|c} \cline{2-6} \cline{8-11} \cline{13-16}
&x&y&z&1_y&1_z&& x&y+1&z&1&& x&y&z+1&1 \\ \cline{2-6} \cline{8-11} \cline{13-16}
\gamma^4=&0&0&1&1&0 & ,\quad \beta^4=& 0&0&1&1 &,\quad \alpha^4=& 0&0&1&0&,\\
&0&0&0&0&1 & & 0&0&0&1 && 0&0&0&1 \\
&0&1&1&0&1 && 0&1&1&0 && 0&1&1&0 \\ \cline{2-6} \cline{8-11} \cline{13-16}
\end{array}\]

\[\begin{array}{c|ccccc|c|cccc|c|cccc|c} \cline{2-6} \cline{8-11} \cline{13-16}
&x&z&y&1_z&1_y&& x&z+1&y&1&& x&z&y+1&1 \\ \cline{2-6} \cline{8-11} \cline{13-16}
\gamma^5=&0&0&0&0&1 & ,\quad \alpha^5= & 0&0&0&1 &,\quad \beta^5=& 0&0&0&1&, \\
&0&1&1&0&1 && 0&1&1&0 && 0&1&1&0 \\ 
&0&0&1&1&0 && 0&0&1&1 && 0&0&1&0\\ \cline{2-6} \cline{8-11} \cline{13-16}
\end{array}\]

\[\begin{array}{c|ccccc|c|cccc|c|cccc|c} \cline{2-6} \cline{8-11} \cline{13-16}
&x&y&z&1_y&1_z&& x&y+1&z&1&& x&y&z+1&1 \\ \cline{2-6} \cline{8-11} \cline{13-16}
\gamma^6=&0&1&1&0&1 & ,\quad \beta^6=& 0&1&1&0 &,\quad \alpha^6 = & 0&1&1&0&. \\ 
&0&0&1&1&0 && 0&0&1&1 && 0&0&1&0\\
&0&0&0&0&1 & & 0&0&0&1 && 0&0&0&1 \\
\cline{2-6} \cline{8-11} \cline{13-16}
\end{array}\]

\end{proposition}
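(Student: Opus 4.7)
The plan is a direct case analysis on the column structure of $M$, using Lemma~\ref{WeightMatch} to translate the condition ``$\blam = \Pt(B,M)$ lies in an indecomposable core block of weight $2$'' into combinatorial constraints on $M$. Since the block is indecomposable of weight $2$ and $r=3$, the weight graph of $\blam$ must be a connected graph on three vertices with two edges, hence a path; the two nonzero $\wt^M(t,s)$ both equal $1$ and the remaining one equals $0$. Permuting rows of $M$ corresponds to permuting the components of $\blam$, so it suffices to fix one ``middle vertex'' for the path and obtain the other orderings -- and in particular the six superscripts on each of $\gamma^i$, $\alpha^i$, $\beta^i$ -- by row-permutation symmetry.

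Fixing, say, $\wt^M(3,1)=0$ and $\wt^M(2,1)=\wt^M(3,2)=1$, I would classify columns by their entries in the three rows. Because $M \in \check\M$, no column equals $(1,1,1)^T$, leaving seven possible column types; let $n_v$ be their multiplicities. The weight expressions then become minima of pairs of nonnegative linear forms in the $n_v$'s. The vanishing condition $\wt^M(3,1)=0$ splits into two ``containment'' subcases according to which of $\delta^M_{+}(3,1)$ and $\delta^M_{-}(3,1)$ is zero, and each weight-one condition $\min(\cdot,\cdot)=1$ forces one of its two entries to equal~$1$. Working these through shows that exactly three matrix shapes arise in each containment subcase: a ``generic'' shape with two distinguished columns each occurring once (the two special columns $1_y$ and $1_z$ of the $\gamma^i$-type), and two ``degenerate'' shapes in which only one special column remains while its partner is absorbed into a bulk column type of multiplicity $z{+}1$ or $y{+}1$ (giving the $\alpha^i$- and $\beta^i$-types). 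The remaining column multiplicities become the free parameters $x$, $y$, $z$.

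The main obstacle is the bookkeeping: the nested $\min=1$ conditions produce several overlapping sub-cases, and one must verify that every admissible $M$ falls into one of the eighteen listed families, with the parameterisation unambiguous once the middle vertex of the path has been fixed. The reverse check -- that each displayed matrix does realise the claimed weight graph -- is a one-line direct computation from Lemma~\ref{WeightMatch}, so the real substance of the argument is the exhaustiveness of the list, which follows from the systematic case analysis above.
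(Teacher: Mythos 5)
Your proposal is correct and takes essentially the same route as the paper's proof: normalise so that one fixed component is the middle vertex of the path, translate the two weight-one conditions and the one weight-zero condition into constraints on column-type multiplicities via Lemma~\ref{WeightMatch}, obtain one generic and two degenerate shapes in each containment subcase, and recover everything else by permuting rows. One caution for the bookkeeping stage: with the middle vertex fixed at component $2$ the six shapes you obtain are $\gamma^1,\gamma^6,\alpha^2,\alpha^3,\alpha^4,\alpha^5$, not $\gamma^u,\alpha^u,\beta^u$ for two fixed values of $u$ --- the superscripts in the statement group matrices by block, and the $\alpha^u,\beta^u$ lying in the same block as $\gamma^u$ have a \emph{different} middle vertex --- so the degenerate shapes you find will not carry the superscripts of the accompanying $\gamma$'s.
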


\begin{proof} 
Suppose that $\blam=\Pt(B,M)$ is such that $S^\blam$ belongs to an indecomposable core block of weight $2$.  We write $(a \; b\; c)^T$ to denote column of length 3 in a matrix.  

Suppose that $\blam=(\la^{(1)},\la^{(2)},\la^{(3)})$ satisfies $\wt(\la^{(1)},\la^{(2)}) = \wt(\la^{(2)},\la^{(3)}) = 1$, whence $\wt(\la^{(1)},\la^{(3)}) = 0$. The latter requirement means that there cannot simultaneously be a column of $M$ with first entry 1 and last entry 0 and a column with first entry 0 and last entry 1.  
Taken in conjunction with the former requirement means that there must be $s$ columns of the form $(1\; 0\; 1)^T$ and $t$ columns of the form $(0\;1\;0)^T$, with $s,t \geq 1$ and either $s=1$ or $t=1$. If $s = t = 1$ then the condition above shows that we end up with either $\gamma^1$ or $\gamma^6$. If $s >1$ then we cannot have any columns of the form $(0\;1\;1)^T$ or $(1\;1\;0)^T$ and we cannot have both $(1\;0\;0)^T$ and $(0\;0\;1)^T$ occuring, so we end up with either $\alpha^2$ or $\alpha^4$. Similarly if $t>1$ we end up with either $\alpha^3$ or $\alpha^5$. 

All other cases now follow by permuting the rows of the matrices determined above. 
\end{proof}

It is straightforward to give an analogue of Proposition~\ref{IndecompTwoBlocks} for $e=0$: For $\delta \in \{\alpha,\beta,\gamma\}$ and $1 \leq u \leq 6$, replace matrices of the form $\delta^u$ with matrices whose rows are indexed by the elements of $\Z$ in the obvious way.  For example, replace $\gamma^1$ with a matrix which has $z \geq 0$ columns equal to $(0\;0\;1)^T$, $y \geq 0$ columns equal to $(0\;1\;1)^T$ and one column equal to $(0\;1\;0)^T$ and to $(1\;0\;1)^T$; and where all other columns are equal to $(0\;0\;0)^T$ or $(1\;1\;1)^T$ with all columns of sufficiently high index containing the former and all columns of sufficiently low index containing the latter.     

Fix once and for all a base tuple $B \in \B$.  From now on, we do not differentiate between a matrix $M \in \M$ and the partition $\Pt(B,M)$.  We will, for example, refer to a matrix $M$ as being Kleshchev whenever $\Pt(B,M)$ is Kleshchev; by Proposition~\ref{KleshReduce}, this is independent of the choice of $B$.   

Suppose we have integers $y,z \geq 0$ and a partition 
\[I= X \sqcup X' \sqcup Y \sqcup Z\] 
such that
\begin{itemize}
\item $|Y|=y+1$ and $|Z|=z+1$.
\item If $e \geq 2$ then $X'=\emptyset$.
\item If $e=0$ then $i \notin X'$ for all $i \gg 0$ and $i\in X'$ for all $i \ll 0$.  
\end{itemize}

Let $Y=\{i_1,i_2,\ldots,i_{y+1}\}$ and $Z=\{j_1,j_2,\ldots,j_{z+1}\}$ where we assume that $i_1 < i_2 \ldots < i_{y+1}$ and $j_1 < j_2 < \ldots < j_{z+1}$.   We now use the notation of Proposition~\ref{IndecompTwoBlocks}.  
Let $1 \leq u \leq 6$ and $\delta \in \{\alpha, \beta,\gamma\}$. If $\epsilon \in \{x,y,z\}$, let $\epsilon(\delta^u)$ be the column below $\epsilon$ in $\gamma^u$.  If $\epsilon \in \{1,1_y,1_z\}$, set  $\epsilon(\delta^u)$ be the column below $\epsilon$ in $\delta^u$.  Set $x'(\delta^u)=(1\;1\;1)^T$.  We define the following elements of $\M$.  
\begin{enumerate}
\item For $1 \leq k \leq y+1$, let $\alpha^u_k=\alpha^u_k(X,X',Y,Z)$ be the matrix where for $c \in I$, 
\[\text{column $c$ is equal to }
\begin{cases}
1(\alpha^u), & c = i_k, \\
y(\alpha^u), & c \in Y \text{ and } c \neq i_k, \\
z(\alpha^u), & c \in Z, \\
x(\alpha^u), & c \in X, \\
x'(\alpha^u), & c \in X'. \\
\end{cases}\]
\item For $1 \leq l \leq z+1$, let $\beta^u_l=\beta^u_l(X,X',Y,Z)$ be the matrix where
 for $c \in I$, 
\[\text{column $c$ is equal to }
\begin{cases}
y(\beta^u), & c \in Y, \\
1(\beta^u), & c = j_l, \\
z(\beta^u), & c \in Z \text{ and } c \neq j_l, \\
x(\beta^u), & c \in X, \\
x'(\beta^u), & c \in X'. \\
\end{cases}\]
\item For $1 \leq k \leq y+1$ and $1 \leq l \leq z+1$, let $\gamma^u_{kl}=\gamma^u_{kl}(X,X',Y,Z)$ be the matrix where
 for $c \in I$, 
\[\text{column $c$ is equal to }
\begin{cases}
1_y(\gamma^u), & c = i_k, \\
y(\gamma^u), & c \in Y \text{ and } c \neq i_k, \\
1_z(\gamma^u), & c = j_l, \\
z(\gamma^u), & c \in Z \text{ and } c \neq j_l, \\
x(\gamma^u), & c \in X, \\
x'(\gamma^u), & c \in X'. \\
\end{cases}\]
\end{enumerate}

\begin{proposition} \label{Partitions}
Suppose that we have an indecomposable core block of weight 2 with $r=3$.  
Then there exists $1 \leq u \leq 6$ and a partition $I= X \sqcup X' \sqcup Y \sqcup Z$ as above such that the Specht modules which appear in the block are indexed by the matrices
\[\{\alpha^u_k \mid 1 \leq k \leq y+1\} \cup
\{\beta^u_l \mid 1 \leq l \leq z+1\} \cup
\{\gamma^u_{kl} \mid 1 \leq k \leq y+1, \; 1 \leq l \leq z+1\}.\]
\end{proposition}

\begin{proof}
By Proposition~\ref{IndecompTwoBlocks}, every partition $\blam$ with $S^\blam$ lying in such a block is of the form $\delta^u$ for $\delta \in \{\alpha,\beta,\gamma\}$ and $1 \leq u \leq 6$.  By Lemma~\ref{beadswap}, the other multipartitions indexing Specht modules in the block are obtained by repeatedly applying bead swaps.  So proving the proposition is equivalent to showing that each equivalence class under the relation $\sim_b$ is of the form 
\[\{\alpha^u_k \mid 1 \leq k \leq y+1\} \cup
\{\beta^u_l \mid 1 \leq l \leq z+1\} \cup
\{\gamma^u_{kl} \mid 1 \leq k \leq y+1, \; 1 \leq l \leq z+1\},\]
for some $1 \leq u \leq 6$ and partition $I= X \sqcup X' \sqcup Y \sqcup Z$.

Fix such a partition of $I$.  Let $1 \leq k \leq y+1$ and consider the matrix $\alpha^1_k$.  Performing a single bead swap on this matrix gives us the matrices
\[\{\alpha^1_{k'} \mid 1 \leq k' \leq y+1 \text{ and } k' \neq k\}, \quad \{\beta^1_l \mid 1 \leq l \leq z+1\}, \quad \{\gamma^1_{kl} \mid 1 \leq l \leq z+1\}.\]  
Similarly if $1 \leq l \leq z+1$ then performing a single bead swap on the matrix $\beta^1_l$ gives us the matrices
\[\{\beta^1_{l'} \mid 1 \leq l' \leq z+1 \text{ and } l' \neq l\}, \quad \{\alpha^1_k \mid 1 \leq k \leq y+1\}, \quad \{\gamma^1_{kl} \mid 1 \leq k \leq y+1\}.\]  
Finally if $1 \leq k \leq y+1$ and $1 \leq l \leq z+1$ then performing a single bead swap on the matrix $\gamma^1_{kl}$ gives us the matrices
\[\{\gamma^1_{k'l} \mid 1 \leq k' \leq y+1\}, \quad \{\gamma^1_{kl'} \mid 1 \leq l' \leq z+1\}, \quad \{\alpha^1_k\}, \quad \{\beta^1_l\}.\]
So the multipartitions
\[\{\alpha^1_k \mid 1 \leq k \leq y+1\} \cup
\{\beta^1_l \mid 1 \leq l \leq z+1\} \cup
\{\gamma^1_{kl} \mid 1 \leq k \leq y+1, \; 1 \leq l \leq z+1\}\]
form an equivalence class under $\sim_b$.  
To complete the proof, note that the matrices $\delta^u$ for $u\geq 2$ are obtained by permuting the rows of the matrices $\delta^1$.  
\end{proof}

Henceforth we fix a partition $I=X \sqcup X' \sqcup Y \sqcup Z$ satisfying the conditions overleaf.  Let $Y=\{i_1,i_2,\ldots,i_{y+1}\}$ and $Z=\{j_1,j_2,\ldots,j_{z+1}\}$ where $i_1<i_2<\ldots<i_{y+1}$ and $j_1 <j_2<\ldots j_{z+1}$.  For $1 \leq u\leq 6$ define
\[\mathcal{D}^u=\{\alpha^u_k \mid 1 \leq k \leq y+1\} \cup
\{\beta^u_l \mid 1 \leq l \leq z+1\} \cup
\{\gamma^u_{kl} \mid 1 \leq k \leq y+1, \; 1 \leq l \leq z+1\}\]
to be the set of matrices indexing the Specht modules lying in the corresponding block.  We say that a matrix $M$ is Kleshchev if its Specht module corresponds to a Kleshchev multipartition.

\begin{proposition} \label{Kleshchevs}
Suppose $M \in \mathcal{D}^u$.  Then $M$ is Kleshchev if and only if it is one of the following:  
\begin{align*}
&\{\gamma^1_{kl} \mid i_k\pre j_l  \text{ and } l \neq z+1\},&
&\{\gamma^1_{kl} \mid j_l \pre i_k \text{ and } k \neq y+1\},&
&\{\alpha^1_k \mid i_k \pre j_{z+1}\},&
&\{\beta^1_l \mid j_l \pre i_{y+1}\},\\
&\{\gamma^u_{kl} \mid j_l \pre i_k \text{ or } (k \neq 1 \text{ and } l \neq z+1)\},&
&\{\alpha^u_k \mid k \neq 1 \text{ and } i_k\pre j_{z+1}\}, &
&\{\beta^u_l \mid i_1 \pre j_l\}, &
&\text{when } u=2,3, \\
&\{\gamma^u_{kl} \mid i_k \pre j_l \text{ or } (l \neq 1 \text{ and } k \neq y+1)\}&
&\{\alpha^u_k \mid k \neq y+1 \text{ and } j_1\pre i_k\},&
&\{\beta^u_l \mid j_l \pre i_{y+1}\},&
&\text{when }u=4,5,&\\
&\{\gamma^6_{kl} \mid i_k\pre j_l  \text{ and } k \neq 1\},&
&\{\gamma^6_{kl} \mid j_l \pre i_k \text{ and } l \neq 1\},&
&\{\alpha^6_k \mid j_1 \pre i_{k}\},&
&\{\beta^6_l \mid i_1 \pre j_l\}.& \\
\end{align*}
\end{proposition}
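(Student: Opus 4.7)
The plan is to use Proposition~\ref{KleshReduce} to reduce immediately to the case $B=\mathbf{0}$, so that each runner of each of the three abaci carries at most one bead and the multipartition $\Pt(\mathbf{0},M)$ is determined transparently by the positions of the $1$'s in $M$. I would then read addable and removable $i$-nodes directly off the abacus for each residue $i \in I$, which is practical because the beadless configuration on each runner makes the addable/removable node structure almost immediate.

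I would treat $u=1$ in detail and handle the remaining values of $u$ by analogous but separate case analyses. The six values of $u$ correspond to row permutations of the underlying matrices, and the Kleshchev condition is \emph{not} invariant under such permutation (the notion of ``below'' reverses if the row order is reversed); this is precisely why the index conditions in the statement differ between the six cases, for instance the switch from $l \neq z+1$ when $u=1$ to $l \neq 1$ when $u=6$. For each $M \in \mathcal{D}^1$, I would enumerate the removable and addable $i$-nodes of $\blam = \Pt(\mathbf{0},M)$ for every residue $i$, identify the highest normal $i$-node if any, and thereby locate the good nodes. For matrices in the purported Kleshchev set I would exhibit a good removable node whose removal yields another matrix of the same form but of strictly smaller total size (obtained either by depleting one of the columns of $Y$, $Z$, or by merging one of the distinguished $1$-columns with a neighbouring column). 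An induction on $|X|+y+z$ then reduces the multipartition down either to an empty multipartition or to one lying in a weight-$\leq 1$ block, where the Kleshchev condition has already been treated.

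For matrices outside the listed set, I would show directly that every removable $i$-node is obstructed by an addable $i$-node strictly below it with no cancelling removable $i$-node in between, so no removable $i$-node is normal and the matrix is not Kleshchev. By Proposition~\ref{weightfacts}(3), removing a rim $e$-hook from any component would drop the weight by $r=3$, taking it negative; hence no rim $e$-hook can be removed within the block, and good node removals must instead redistribute nodes between the three components, keeping the analysis finite and combinatorial.

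The main obstacle is the case analysis for the $\gamma^u_{kl}$ matrices, where the two distinguished $1$-columns (of types $1_y$ and $1_z$) compete to produce the highest normal node. Which removable node is highest depends sensitively on the comparison $i_k \prec j_l$ versus $j_l \prec i_k$, and this is precisely the source of the bifurcation $\{i_k \prec j_l,\; l \neq z+1\} \cup \{j_l \prec i_k,\; k \neq y+1\}$ in the statement for $u=1$. Carefully tracking which addable and removable $i$-nodes appear in components $1$, $2$, and $3$, and in what vertical order, for each of the six ``shapes'' $\alpha^u, \beta^u, \gamma^u$, is the delicate combinatorial heart of the argument, and it accounts for the differences between the $u=2,3$, the $u=4,5$, and the $u=1,6$ index conditions.
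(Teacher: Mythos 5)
Your overall strategy --- reduce to $B={\bf 0}$ via Proposition~\ref{KleshReduce}, read the addable and removable $i$-nodes of $\Pt({\bf 0},M)$ directly off the matrix (a removable $i$-node is a ``01'' pattern in a row, an addable one a ``10''), and then run a case analysis over the shapes $\alpha^u$, $\beta^u$, $\gamma^u$ and the relative positions of the distinguished columns --- is exactly the paper's. The bifurcation you identify for $\gamma^u_{kl}$ according to whether $i_k \prec j_l$ or $j_l \prec i_k$, and the observation that the six values of $u$ must be treated separately because normality is not preserved under permuting the rows, are both correct and are the crux of the matter.

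However, the inductive mechanism you propose has a gap. Removing a single good node corresponds to moving one $1$ one column to the left; the result is a multipartition of $n-1$ in a different block whose matrix is in general not of the form $\Delta$ appearing in any $\mathcal{D}^{u'}$ with smaller parameters, so the claim that a good-node removal ``yields another matrix of the same form but of strictly smaller total size'' fails for a single removal, and the proposed base case (``empty or lying in a weight-$\leq 1$ block'') is not where the recursion lands. Your parenthetical about ``depleting a column'' suggests you really intend \emph{batch} removals --- emptying or filling an entire column --- but then the missing ingredient is the fact (used in the proof of Proposition~\ref{KleshReduce}) that if the highest removable nodes of a fixed residue are all normal, then removing \emph{all} of them preserves the Kleshchev property in both directions; without that crystal-theoretic statement the induction does not close. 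The paper sidesteps the issue by arguing directly that all the $1$s can be moved to the far left in an explicit order (first fill the leftmost distinguished column with $1$s to kill the potential abnormality, then clear columns from left to right, working from the top row down) without ever removing an abnormal node, and conversely exhibiting a permanent obstruction when the stated conditions fail. Finally, the appeal to Proposition~\ref{weightfacts}(3) is a red herring: good-node removal deletes a single node, never a rim $e$-hook, and does not ``redistribute nodes between components,'' so no weight argument is needed there.
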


\begin{proof}

In view of Proposition~\ref{KleshReduce} it is sufficient to consider the case in which $B = {\bf 0}$, so that the relation $\prec$ is identical to $<$, and there is a maximum of one bead on each runner. This means that if $e \ge 2$, there are no removable $0$-nodes in $\blam = \Pt({\bf 0},M)$, since it is a multicore. Any removable $i$-node of $\blam$ corresponds to an entry $1$ in column $i$ of $M$ that has an entry $0$ immediately to its left (call this an ``01''), and any addable $i$-node corresponds to a $0$ in column $i$ that has a $1$ immediately to its left (likewise, a ``10''). We can read off normality from $M$: the $1$ in an ``01'' corresponds to a node which is not normal if and only if there is no ``01'' below it but at least one ``10'', or there is an ``01'' below it with a ``10'' in between them. At any given point and for any $i$, the good $i$-node is simply the normal $i$-node corresponding to a $1$ in the highest possible row. Removing a node simply corresponds to moving a $1$ one space to the left, and $\blam$ is Kleshchev if and only if we can move all the $1$s all the way to the left without ever removing any nodes which are not normal.   

We will describe the $\alpha^1_k$, $\beta^1_l$, and $\gamma^1_{kl}$ cases in detail; the others vary only in the role played by the different rows in the matrix. 

Consider the matrix $\alpha^1_k$: the condition in the proposition amounts to the requirement that there be at least one $z(\alpha^1)$ column to the right of the $1(\alpha^1)$ column. The only nodes which potentially are not normal correspond to the $1$s in the $1(\alpha^1)$ column. The nodes corresponding to these $1$s fail to be normal because they must be moved past either a $y(\alpha^1)$ or a $z(\alpha^1)$ column to their left -- and since there must be at least one $z(\alpha^1)$ column somewhere in $M$, this will definitely happen if the required condition is not met. To get rid of this problem, we need to remove nodes in such a way as to fill the $1(\alpha^1)$ column with $1$s, which is to say we need to be able to move a $1$ on the bottom row to the left, into the $1(\alpha^1)$ column. If we can do this, we can then move all the $1$s all the way to the left, by moving leftmost columns first and working from the top row to the bottom. 

For the matrix $\beta^1_l$, initially, the only node which potentially is not normal corresponds to the $1$ entry in the $1(\beta^1)$ column. The condition in the proposition is equivalent to the assertion that there is a $y(\beta^1)$ column somewhere to the right of the $1(\beta^1)$ column. If this is the case, then we can fill the $1(\alpha^1)$ column with $1$s by removing normal nodes, and from that configuration all the $1$s can be moved all the way to the left, as in the previous paragraph. On the other hand, if $i_{y+1} < j_l$, so there is no $y(\beta^1)$ column to the right of the $1(\beta^1)$ column but at least one to its left, then it is impossible to move the $1$ in the top row all the way to the left -- there is always a ``10'' somewhere in the middle row to block it. 

For the matrix $\gamma^1_{kl}$, the condition in the proposition says that if the $1_z(\gamma^1)$ column is to the right of the $1_y(\gamma^1)$ column then there must be at least one $z(\gamma^1)$ column further to its right, and if the $1_y(\gamma^1)$ column is to the right of the $1_z(\gamma^1)$ column then there must be a $y(\gamma^1)$ column further to its right. Either way, the $1_y(\gamma^1)$ and $1_z(\gamma^1)$ columns contain the $1$s corresponding to the only nodes which potentially are not normal, and $\blam$ will once again be Kleshchev if we can manage to fill the leftmost of these columns with $1$s. If the $1_y(\gamma^1)$ column occurs to the right of the $1_z(\gamma^1)$ column then at some stage we need to move a $1$ corresponding to a node which is not normal in the middle row; provided that there is a $1$ in the bottom row somewhere to its right, we can move it until it is below this $1$, at which point the previously addable node that was causing the abnormality will no longer be addable. (That is, the ``10'' in the bottom row will have become a ``11.'')  On the other hand, if the $1_z(\gamma^1)$ column occurs to the right of the $1_y(\gamma^1)$ column then at some stage we first to move a $1$ corresponding to a node which is not normal in the top row -- provided there is a $z(\gamma^1)$ somewhere to its right, we can move both its $1$s over until this $1$ is rendered normal, then we need to move the $1$ in the bottom row, which must be normal (but will only be good once the $1$ in the top row is dealt with).  Either way, once the leftmost of the $1_y(\gamma^1)$ and $1_z(\gamma^1)$ columns has been filled with $1$s, we can proceed as in the $\alpha^1_k$ and $\beta^1_l$ cases.  \end{proof}

\begin{remark}
Combining Proposition~\ref{Partitions} and Proposition~\ref{Kleshchevs}, a simple counting argument shows that there are $(y+2)(z+2)-1$ Specht modules and $(y+1)(z+1)$ simple modules in each block.
\end{remark}

We are now ready to compute the decomposition matrices.  Recall that if $\blam$ is a Kleshchev multipartition, we defined
\[P(\blam) = \sum_{\bmu \in \Parts}d_{\bmu\blam}(v) \bmu.\] 
If $e\geq 2$, set $i_{0} = j_{0} = -1$ and $i_{y+2}=j_{z+2}=e$.  If $e=0$, choose $i_0,j_0 \ll 0$ and $i_{y+2},j_{z+2} \gg 0$.  We make these definitions so that the indices $\tilde{k},\tilde{l}$ below always exist.    

We now find $P(\delta)$ for all Kleshchev multipartitions $\delta \in \mathcal{D}^u$, for each $1 \leq u \leq 6$.  

\begin{theorem} \label{MainTheorem} \quad 

\begin{enumerate}
\item
\begin{itemize}
\item Suppose that $\alpha^1_k$ is Kleshchev.  Let $\tilde{k}$ be minimal such that $i_k \pre j_{\tilde{k}}$.  Then
\[P(\alpha^1_k) = 
\begin{cases}
\alpha^1_k + v \gamma^1_{k \tilde{k}} +v^2 \beta^1_{\tilde{k}}, &  j_{\tilde{k}}\pre i_{k+1}, \\
\alpha^1_k + v \alpha^1_{k+1}+ v \gamma^1_{k,\tilde{k}} + v^2 \gamma^1_{k+1,\tilde{k}}, &  i_{k+1} \pre j_{\tilde{k}}. 
\end{cases}\]
\item Suppose that $\beta^1_l$ is Kleshchev.  Let $\tilde{l}$ be minimal such that $j_l \pre i_{\tilde{l}}$.  Then
\[P(\beta^1_l) = \begin{cases}
\beta^1_l + v \gamma^1_{\tilde{l},l}  + v^2 \alpha^1_{\tilde{l}}, &  i_{\tilde{l}} \pre j_{l+1}, \\
\beta^1_l+v \beta^1_{l+1} + v\gamma^1_{\tilde{l},l} + v^2 \gamma^1_{\tilde{l},l+1}, & j_{l+1} \pre i_{\tilde{l}}.
\end{cases}\]
\item Suppose that $\gamma^1_{k,l}$ is Kleshchev.  Then
\[P(\gamma^1_{k,l}) = \begin{cases}
\gamma^1_{k,l} + v \gamma^1_{k,l+1} + v \gamma^1_{k+1,l} +v^2 \gamma^1_{k+1,l+1}, &  i_{k+1} \pre j_l \text{ or } j_{l+1} \pre i_k, \\
\gamma^1_{k,l}+ v\gamma^1_{k,l+1} + v\alpha_{k+1}^1 + v\beta^1_l+ v^2 \gamma^1_{k+1,l+1}  ,  
& i_k \pre j_l \pre i_{k+1} \pre j_{l+1}, \\
\gamma^1_{k,l} +v\gamma^1_{k,l+1} + v\beta^1_l + v^2 \beta^1_{l+1}, 
&i_k \pre j_l \pre  j_{l+1} \pre i_{k+1}, \\
\gamma^1_{k,l} +v\gamma^1_{k+1,l} + v\alpha^1_k +v^2 \alpha^1_{k+1}, 
& j_l \pre i_k \pre i_{k+1} \pre j_{l+1}, \\
\gamma^1_{k,l}+ v\gamma^1_{k+1,l}  + v\alpha^1_k + v\beta^1_{l+1} + v^2 \gamma^1_{k+1,l+1},  
& j_l \pre i_k \pre j_{l+1} \pre i_{k+1}. \\
\end{cases}\]
\end{itemize}
\item Suppose that $u=2$ or $u=3$.  
\begin{itemize}  
\item Suppose that $\alpha^u_k$ is Kleshchev. 
Let $\tilde{k}$ be minimal such that $i_k \pre j_{\tilde{k}}$.  Then
\[P(\alpha^u_k) = 
\begin{cases}
\alpha^u_k + v\beta^u_{\tilde{k}-1} + v\gamma^u_{k \tilde{k}}  + v\gamma^u_{k-1 \tilde{k}-1} + v^2 \gamma^u_{k-1 \tilde{k}}, & i_{k-1} \pre j_{\tilde{k}-1},\\
\alpha^u_k + v\alpha^u_{k-1}+v\gamma^u_{k,\tilde{k}} +v^2 \gamma^u_{k-1,\tilde{k}}, & j_{\tilde{k}-1} \pre i_{k-1}. 
\end{cases}\]

\item Suppose that $\beta^u_l$ is Kleshchev.  Let $\tilde{l}$ be maximal such that $i_{\tilde{l}}<j_l$.  Then
\[P(\beta^u_l) = \begin{cases}
\beta^u_l + v \alpha^u_{\tilde{l}} + v^2 \gamma^u_{\tilde{l},l}, & j_{l-1} \pre  i_{\tilde{l}}, \\
\beta^u_l+v\beta^u_{l-1} + v\gamma^u_{\tilde{l},l-1} + v^2\gamma^u_{\tilde{l},l}, & i_{\tilde{l}} \pre j_{l-1}.
\end{cases}\]
\item Suppose that $\gamma^u_{k,l}$ is Kleshchev.  Then
\[P(\gamma^u_{k,l}) = \begin{cases}
\gamma^u_{k,l} + v\gamma^u_{k-1, l} + v\gamma^u_{k, l+1} + v^2\gamma^u_{k-1, l+1}, & i_{k} \pre j_{l} \text{ or } j_{l+1} \pre  i_{k-1}, \\ 
\gamma^u_{k,l} + v\gamma_{k, l+1}^u + v\gamma^u_{k-1, l} + v\beta^u_{l+1} + v^2 \alpha^u_{k-1} , & j_l \pre  i_{k-1} \pre j_{l+1} \pre i_k, \\ 
\gamma^u_{k,l} + v\gamma^u_{k-1, l} +v \alpha^u_{k} + v^2\alpha^u_{k-1} , & j_l \pre i_{k-1} \pre i_{k} \pre j_{l+1}, \\
\gamma^u_{k,l} + v\gamma^u_{k, l+1}+v\beta^u_{l+1}+ v^2\beta^u_l , &  i_{k-1}\pre j_l \pre j_{l+1} \pre i_k, \\
\gamma^u_{k,l} + v\alpha^u_k + v^2\beta^u_l, & i_{k-1} \pre j_l \pre i_k \pre j_{l+1}.\\
\end{cases}\]
\end{itemize}
\item Suppose that $u=4$ or $u=5$.  
\begin{itemize}
\item Suppose that $\alpha^u_k$ is Kleshchev.   Let $\tilde{k}$ be maximal such that $j_{\tilde{k}} \pre i_k$.  Then
\[P(\alpha^u_k) = 
\begin{cases}
\alpha^u_k + v\beta^u_{\tilde{k}+1} + v\gamma^u_{k, \tilde{k}} + v\gamma^u_{k+1, \tilde{k}+1} + v^2 \gamma^u_{k+1, \tilde{k}}, & j_{\tilde{k}+1} \pre i_{k+1}, \\
\alpha^u_k + v\alpha^u_{k+1}+v\gamma^u_{k,\tilde{k}} + v^2 \gamma^u_{k+1,\tilde{k}}, &  i_{k+1} \pre j_{\tilde{k}+1}. 
\end{cases}\]
\item Suppose that $\beta^u_l$ is Kleshchev.  Let $\tilde{l}$ be minimal such that $j_l \pre i_{\tilde{l}}$.  Then
\[P(\beta^u_l) = \begin{cases}
\beta^u_l + v\alpha^u_{\tilde{l}} + v^2 \gamma^u_{\tilde{l},l}, &  i_{\tilde{l}} \pre j_{l+1}, \\
\beta^u_l+v\beta^u_{l+1} + v\gamma^u_{\tilde{l},l+1}+ v^2 \gamma^u_{\tilde{l},l} , & j_{l+1} \pre i_{\tilde{l}}.

\end{cases}\]
\item Suppose that $\gamma^u_{k,l}$ is Kleshchev.  

\[P(\gamma^u_{k,l}) = \begin{cases}
\gamma^u_{k,l} + v\gamma^u_{k+1, l} + v\gamma^u_{k, l-1} + v^2\gamma^u_{k+1, l-1}, & i_{k+1} \pre j_{l-1} \text{ or } j_l \pre  i_k, \\ 
\gamma^u_{k,l} + v\gamma_{k, l-1}^u + v\gamma^u_{k+1, l} + v\beta^u_{l-1} +v^2 \alpha^u_{k+1}, & i_k \pre j_{l-1} \pre i_{k+1} \pre j_l, \\ 
\gamma^u_{k,l} + v\gamma^u_{k, l-1} + v\beta^u_{l-1} + v^2 \beta^u_{l}, & i_k \pre j_{l-1} \pre j_l \pre i_{k+1}, \\
\gamma^u_{k,l} + v\gamma^u_{k+1, l}+ v\alpha^u_k+v^2 \alpha^u_{k+1}, &  j_{l-1} \pre i_k \pre i_{k+1} \pre j_l, \\
\gamma^u_{k,l} + v\alpha^u_k + v^2\beta^u_l, & j_{l-1} \pre i_k \pre j_l \pre i_{k+1}.\\
\end{cases}\]
\end{itemize}

\item
\begin{itemize}
\item Suppose that $\alpha^6_k$ is Kleshchev.  Let $\tilde{k}$ be maximal such that $j_{\tilde{k}}<i_k$.  Then
\[P(\alpha^1_k) = 
\begin{cases}
\alpha^6_k  + v \gamma^6_{k  \tilde{k}} + v^2 \beta^6_{\tilde{k}}, & i_{k-1} \pre j_{\tilde{k}}, \\
\alpha^6_k + v \alpha^6_{k-1}+ v \gamma^6_{k,\tilde{k}} + v^2 \gamma^6_{k-1,\tilde{k}}, & 
j_{\tilde{k}} \pre i_{k-1}.
\end{cases}\]
\item Suppose that $\beta^6_l$ is Kleshchev.  Let $\tilde{l}$ be maximal such that $i_{\tilde{l}}<j_l$.  Then
\[P(\beta^6_l) = \begin{cases}
\beta^6_l + v \gamma^6_{\tilde{l},l}  + v ^2\alpha^6_{\tilde{l}}, & 
j_{l-1} \pre i_{\tilde{l}}, \\
\beta^6_l+v\beta^6_{l-1} + v\gamma^6_{\tilde{l},l} + v^2\gamma^6_{\tilde{l},l-1}, &
i_{\tilde{l}} \pre j_{l-1}. 
\end{cases}\]
\item Suppose that $\gamma^6_{k,l}$ is Kleshchev.  Then
\[P(\gamma^6_{k,l}) = \begin{cases}
\gamma^6_{k,l} + v \gamma^6_{k,l-1} + v\gamma^6_{k-1,l} + v^2 \gamma^6_{k-1,l-1}, &  i_{k} \pre j_{l-1} \text{ or } j_{l} \pre i_{k-1}, \\
\gamma^6_{k,l}+ v \gamma^6_{k-1,l} + v\alpha_{k}^6 + v\beta^6_{l-1}  + v^2 \gamma^6_{k-1,l-1},  
& i_{k-1} \pre j_{l-1} \pre i_{k} \pre j_{l}, \\
\gamma^6_{k,l} +v\gamma^6_{k,l-1} + v\beta^6_l + v^2\beta^1_{l-1}, 
&i_{k-1} \pre j_{l-1} \pre  j_{l} \pre i_{k}, \\
\gamma^6_{k,l} +v\gamma^6_{k-1,l} + v\alpha^6_k + v^2\alpha^6_{k-1}, 
& j_{l-1} \pre i_{k-1} \pre i_{k} \pre j_{l}, \\
\gamma^6_{k,l}+ v\gamma^6_{k,l-1} +v \alpha^6_{k-1} +v \beta^6_{l } + v^2\gamma^6_{k-1,l-1} ,  
& j_{l-1} \pre i_{k-1} \pre j_{l} \pre i_{k}. \\
\end{cases}\]
\end{itemize}
\end{enumerate}

\end{theorem}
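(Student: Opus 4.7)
The plan is to apply Corollary~\ref{PutTogether} to each Kleshchev multipartition $\delta \in \mathcal{D}^u$: for each such $\delta$ I would exhibit a matrix $L$ with $\wt(\Pt(B,L))=0$ (so that $P(\Pt(B,L))=\Pt(B,L)$ by Proposition~\ref{weightfacts}(i)) together with an explicit sequence $G^{(k_1)}_{i_1},\ldots,G^{(k_m)}_{i_m}$ whose composition applied to $L$ has $\delta$ as leading term and all other terms with coefficients in $v\N[v]$; the claimed formula for $P(\delta)$ can then be read off directly. Lemma~\ref{Ignore01} allows us to delete any column of $L$ in which all three entries agree, so at any given step only the columns at positions $Y\cup Z$ together with at most one or two auxiliary working columns need to be tracked.

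I would first handle the case $u=1$ in detail. For $\alpha^1_k$, choose $L$ to agree with $\alpha^1_k$ except at position $i_k$, where the $(1,1,0)^T$-column is replaced by a $(0,0,1)^T$-column, and at one position in $X$ immediately to the right of $i_k$, where a $(0,0,0)^T$-column is replaced by a $(0,1,1)^T$-column; the three rows of $L$ are nested, so $\wt(L)=0$. A single $G$-step that moves the relevant $1$'s into column $i_k$ produces $\alpha^1_k$ together with the predicted correction terms, and the two subcases in the theorem correspond to whether $j_{\tilde k}$ lies to the left or to the right of $i_{k+1}$; the exponents of $v$ arise by applying the formula for $N_i$ row-by-row. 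The $\beta^1_l$ cases are dual under the interchange of rows $1$ and $2$. For $\gamma^1_{k,l}$ a two-step induction is required: first a move creates the $1_z$-entry and then a second move creates the $1_y$-entry (or vice versa, depending on whether $i_k\pre j_l$). The five subcases in the theorem correspond precisely to the five possible relative orderings of the four indices $i_k,i_{k+1},j_l,j_{l+1}$, each producing a different set of non-leading terms.

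The cases $u=2,\ldots,6$ are handled by the same strategy but must be verified independently, because both the set of Kleshchev matrices (Proposition~\ref{Kleshchevs}) and the values of $N_i$ (which depend on which rows lie below the rows being altered) change with $u$. Fortunately, the combinatorics of choosing a weight-zero starting matrix and the form of the induction sequence are invariant under permutation of the three rows, so the verification reduces to systematically replaying the $u=1$ analysis with the rows placed in each of the five other orders.

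The main obstacle will be the five-term identities for $\gamma^u_{k,l}$ in the two ``interleaved'' orderings $i_k\pre j_l\pre i_{k+1}\pre j_{l+1}$ and $j_l\pre i_k\pre j_{l+1}\pre i_{k+1}$ (and their analogues for $u\neq 1$). A single $G$-step does not suffice, and in the required two-step induction one must choose carefully which intermediate matrix to pass through so that the coefficient of $\delta$ itself is exactly $1$ (not $v$ plus something) and each other coefficient lands in $v\N[v]$. Tracking the contribution of each row below the moved column to the exponent $N_i$ in all five of the resulting terms constitutes the bulk of the case-by-case bookkeeping.
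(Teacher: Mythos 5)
Your overall strategy is exactly the one the paper uses: apply Corollary~\ref{PutTogether} by exhibiting, for each Kleshchev $\delta$, a weight-zero matrix $L$ and an explicit sequence of operators $G^{(k)}_i$ whose output has $\delta$ as leading term, invoking Lemma~\ref{Ignore01} to discard constant columns and treating the six values of $u$ by replaying the computation with the rows permuted. The paper likewise works only a representative sample of cases (it chooses $u=6$) and leaves the rest to the reader.

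However, the one construction you actually write down does not work. For $\alpha^1_k$ you propose to take $L$ equal to $\alpha^1_k$ except that the $(1\,1\,0)^T$ column at $i_k$ is replaced by $(0\,0\,1)^T$ and a $(0\,0\,0)^T$ column of $X$ is replaced by $(0\,1\,1)^T$. This $L$ does have weight zero (its rows are nested), but it has the wrong row content: row $1$ of $L$ contains no $1$ at all, whereas row $1$ of $\alpha^1_k$ contains one. Since each operator $G^{(k)}_i$ only moves existing $1$s one column to the right within their rows, it preserves the number of $1$s in every row, so no induction sequence applied to your $L$ can ever produce $\alpha^1_k$ (nor, for the same reason, can a ``single $G$-step that moves the relevant $1$'s into column $i_k$'' be parsed as any legal $G^{(k)}_i$). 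The correct choice, which is the pattern the paper uses throughout, is to modify $\alpha^1_k$ on a \emph{pair} of columns so that row sums are preserved: replace the special column at $i_k$ by $(1\,1\,1)^T$ and the column at $j_{\tilde k}$ by $(0\,0\,0)^T$; then $G^{(1)}_{j_{\tilde k}}$ (or, when $i_{k+1}\pre j_{\tilde k}$, a chain $G^{(1)}_{j_{\tilde k}}\cdots G^{(1)}_{i_k+1}$ threading past the intervening $Y$-columns, which Lemma~\ref{Ignore01} does not let you delete) produces exactly the three or four claimed terms with the claimed powers of $v$. A second, smaller slip: $\beta^1_l$ is not obtained from $\alpha^1_k$ by interchanging two rows (row permutations relate the different values of $u$, not $\alpha$ to $\beta$ within a fixed $u$); the $\beta$ case must be run separately, though it is structurally parallel. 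With the starting matrices corrected in this way the rest of your plan, including the treatment of the five interleaving patterns for $\gamma^u_{k,l}$ via longer sequences and a divided power $G^{(2)}$, goes through as in the paper.
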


Before proving this theorem, we give an example.  Recall that when we change the values of $u$, we essentially permute the rows of the matrix and hence the order of the terms in the multicharge and the components in the multipartitions.  From the example below, we can see that this leads to a non-trivial rearrangement of the decomposition matrices.    

\begin{ex} \label{Ex4}
Suppose that $e =4$.  Take the base tuple to be $B={\bf 0}$.
Below, for $u=1,3,5$ we have taken $Y=\{1,3\}$ and $Z=\{0,2\}$ and for $u=2,4,6$ we have taken $Y=\{0,2\}$ and $Z=\{1,3\}$.  Note that the multicharge is determined by the partitions.  The block decomposition matrices are then as follows, where the dimensions of the simple modules are given in the top row and the dimensions of the Specht modules are given in the last column.   For reasons of space, we have removed the outer brackets on each partition and used the symbol $|$ to indicate a new component.   
\begin{align*}
&\begin{array}{|c|l|cccc|c|} \hline 
\mc=(0,1,2) &&40&20&60&40& \\ \hline
\beta^{(1)}_1 & (\emptyset|2,1|1^3) & 1&&&&40 \\
\gamma^{(1)}_{11} & (1|2|1^3) & v&1&&&60 \\
\alpha^{(1)}_{1} & (1|2,1|1^2) & v^2&v&1&&120 \\
\gamma^{(1)}_{12} & (1|2^2|1) &&&v&&60 \\
\beta^{(1)}_2 & (2|2,1|1) & &v&v^2&1&120 \\
\gamma^{(1)}_{21} & (3|\emptyset|1^3) &&v&&&20 \\
\gamma^{(1)}_{22} & (3|1^2|1) &&v^2&&v&60 \\
\alpha^{(1)}_{2} & (3|2,1|\emptyset) & &&&v^2&40 \\ \hline
\end{array} 
&&\begin{array}{|c|l|cccc|c|} \hline 
\mc=(2,1,0) &&40&20&60&40& \\ \hline
\beta^{(6)}_{2}&(\emptyset|2,1|3) &1&&&&40 \\ 
\gamma^{(6)}_{22}&(1|1^2|3) &v&1&&&60 \\ 
\alpha^{(6)}_{2} & (1|2,1|2) & v^2&v&1&&120 \\
\gamma^{(6)}_{21} & (1|2^2|1) &&&v&&60 \\
\beta^{(6)}_{1} & (1^2|2,1|1) &&v&v^2&1&120 \\
\gamma^{(6)}_{12} & (1^3|\emptyset|3) & &v&&&20 \\
\gamma^{(6)}_{11} & (1^3|2|1) &&v^2&&v&60 \\
\alpha^{(6)}_{1} & (1^3|2,1|\emptyset) & &&&v^2&40 \\ \hline
\end{array}\\
\\
&\begin{array}{|c|l|cccc|c|} \hline 
\mc=(2,0,1) &&40&60&20&40& \\ \hline
\beta^{(2)}_{2}&(\emptyset|3|2,1) &1&&&&40 \\ 
\gamma^{(2)}_{21}&(1|1|2^2) &&1&&&60 \\ 
\alpha^{(2)}_{2} & (1|2|2,1) & v&v&1&&120 \\
\gamma^{(2)}_{22} & (1|3|1^2) &v^2 &&v&&60 \\
\beta^{(2)}_{1} & (1^2|1|2,1) &&v^2&v&1&120 \\
\alpha^{(2)}_{1} & (1^3|\emptyset|2,1) & &&&v&40 \\
\gamma^{(2)}_{11} & (1^3|1|2) &&&v&v^2&60 \\
\gamma^{(2)}_{12} & (1^3|3|\emptyset) & &&v^2&&20 \\ \hline
\end{array}
&&\begin{array}{|c|l|cccc|c|} \hline 
\mc=(0,2,1) &&40&60&20&40& \\ \hline
\beta^{(5)}_{1}&(\emptyset|1^3|2,1) &1&&&&40 \\ 
\gamma^{(5)}_{12}&(1|1|2^2) &&1&&&60 \\ 
\alpha^{(5)}_{1} & (1|1^2|2,1) & v&v&1&&120 \\
\gamma^{(5)}_{11} & (1|1^3|2) &v^2 &&v^2&&60 \\
\beta^{(5)}_{2} & (2|1|2,1) &&v^2&v&1&120 \\
\alpha^{(5)}_{4} & (3|\emptyset|2,1) & &&&v&40 \\
\gamma^{(5)}_{22} & (3|1|1^2) &&&v&v^2&60 \\
\gamma^{(5)}_{21} & (3|1^3|\emptyset) & &&v^2&&20 \\ \hline
\end{array} \\ \\
& 
\begin{array}{|c|l|cccc|c|} \hline 
\mc=(1,2,0) &&20&40&60&40& \\ \hline
\gamma^{(3)}_{21}&(\emptyset|1^3|3) &1&&&&20 \\ 
\gamma^{(3)}_{22}&(1^2|1|3) &v&1&&&60 \\ 
\gamma^{(3)}_{11} & (2|1^3|1) &v&&1&&60 \\
\alpha^{(3)}_2 & (2,1|\emptyset|3) & &v&&&40 \\
\beta^{(3)}_{2} & (2,1|1|2) & v&v^2&&1&120 \\
\alpha^{(3)}_1 & (2,1|1^2|1) &v^2 &&v&v&120 \\
\beta^{(3)}_{1} & (2,1|1^3|\emptyset) & &&v^2&&40 \\ 
\gamma^{(3)}_{12} & (2^2|1|1) &&&&v^2&60 \\ \hline
\end{array}&
&\begin{array}{|c|l|cccc|c|} \hline 
\mc=(1,0,2) &&20&40&60&40& \\ \hline
\gamma^{(4)}_{12}&(\emptyset|3|1^3) &1&&&&20 \\ 
\gamma^{(4)}_{11}&(2|1|1^3) &v&1&&&60 \\ 
\gamma^{(4)}_{22} & (1^2|3|1) &v&&1&&60 \\
\alpha^{(4)}_1 & (2,1|\emptyset|1^3) & &v&&&40 \\
\beta^{(4)}_{1} & (2,1|1|1^2) & v&v^2&&1&120 \\
\alpha^{(4)}_2 & (2,1|2|1) &v^2 &&v&v&120 \\
\beta^{(4)}_{2} & (2,1|3|\emptyset) & &&v^2&&40 \\ 
\gamma^{(4)}_{21} & (2^2|1|1) &&&&v^2&60 \\ \hline
\end{array} 
\end{align*}
\end{ex}

\begin{proof}[Proof of Theorem~\ref{MainTheorem}]
The proof of the main theorem is given by a case-by-case analysis in which we apply Corollary~\ref{PutTogether}. Rather than go through each one, we look in detail at some cases when $u=6$; we believe that this will sufficiently illustrate the techniques we use so that the reader can verify the remainder for themselves.   By Lemma~\ref{Ignore01}, we may assume that if $e\geq 2$ then $X,X' = \emptyset$ and if $e=0$ then if $c \in X$ then $c+1 \in X$ and if $c \in X'$ then $c-1 \in X'$.  

Suppose that $\alpha^6_k$ is Kleshchev so that $j_1 \pre i_k$.  Choose $\tilde{k}$ maximal such that $j_{\tilde{k}}\pre i_k$.  Suppose first that $i_{k-1} \pre j_{\tilde{k}}$. Then $j_{\tilde{k}}+1 = i_k$ and part of the matrix for $\gamma^6_{kl}$ is given by
\[\begin{array}{|DCCD|} \hline
\elp&j_{\tilde{k}} & i_k&\elp \\ \hline
&1&0& \\ 
&1&0& \\ 
&0&1& \\ \hline
\end{array}\; .\]
Let 
\[L=\begin{array}{|DCCD|} \hline
\elp &j_{\tilde{k}}& i_k &\elp \\ \hline
&1&0& \\ 
&1&0& \\ 
&1&0& \\ \hline
\end{array}\]
with all other columns as in $\alpha^6_k$.  Then $\wt(L)=0$ and 
\begin{align*}
G^{(1)}_{i_k} L &
=\begin{array}{|DCCD|} \hline 
\elp &j_{\tilde{k}}& i_k &\elp \\ \hline
&1&0& \\ 
&1&0& \\ 
&0&1& \\ \hline
\end{array} 
+ v \;\begin{array}{|DCCD|} \hline
\elp &j_{\tilde{k}}& i_k &\elp \\ \hline
&1&0& \\ 
&0&1& \\ 
&1&0& \\ \hline
\end{array} +v^2 \;
\begin{array}{|DCCD|} \hline
\elp &j_{\tilde{k}}& i_k &\elp \\ \hline
&0&1& \\ 
&1&0& \\ 
&1&0& \\ \hline
\end{array} 
\\
& =\alpha^6_{k}+ v \gamma^6_{\tilde{k}k}+v^2 \beta^6_{\tilde{k}}.  
\end{align*}
Suppose now that $j_{\tilde{k}} \pre i_{k-1}$.   Then part of the matrix for $\gamma^6_{kl}$ is given by
\[\begin{array}{|DCCDCCD|} \hline
\elp & j_{\tilde{k}} &&&i_{k-1}& i_k&\elp \\ \hline
&1&1&\elp&1&0& \\ 
&1&0&\elp&0&0& \\ 
&0&0&\elp&0&1& \\ \hline
\end{array}\;.\]
Let 
\[L=\begin{array}{|DCCDCCD|} \hline
\elp &j_{\tilde{k}} &&\elp&i_{k-1}& i_k& \elp \\ \hline
&1&1&\elp&1&0& \\ 
&1&0&\elp&0&0& \\ 
&1&0&\elp&0&0& \\ \hline
\end{array}\]
with all other columns as in $\alpha^6_k$.  Then $\wt(L)=0$ and 
\begin{align*}
G^1_{i_k} \elp G^1_{j_{\tilde{k}+2}}  G^1_{j_{\tilde{ k}+1}} L &=
\begin{array}{|DCCDCCD|} \hline
\elp&j_{\tilde{k}} &&\elp&i_{k-1}& i_k&\elp \\ \hline
&1&1&\elp&1&0& \\ 
&1&0&\elp&0&0& \\ 
&0&0&\elp&0&1& \\ \hline
\end{array}
+ v \; \begin{array}{|DCCDCCD|} \hline
\elp&j_{\tilde{k}} &&\elp&i_{k-1}& i_k&\elp \\ \hline
&1&1&\elp&0&1 &\\ 
&1&0&\elp&0&0 &\\ 
&0&0&\elp&1&0 &\\ \hline
\end{array} \\
&\qquad + v \; \begin{array}{|DCCDCCD|} \hline
\elp&j_{\tilde{k}} &&\elp&i_{k-1}& i_k&\elp \\ \hline
&1&1&\elp&1&0& \\ 
&0&0&\elp&0&1& \\ 
&1&0&\elp&0&0& \\ \hline
\end{array}
+v^2 \; \begin{array}{|DCCDCCD|} \hline
\elp&j_{\tilde{k}} &&\elp&i_{k-1}& i_k&\elp \\ \hline
&1&1&\elp&0&1& \\ 
&0&0&\elp&1&0& \\ 
&1&0&\elp&0&0& \\ \hline
\end{array}\\
& =\alpha^6_{k}+ v \alpha^6_{k-1} + v \gamma^6_{k \tilde{k}} + v^2 \gamma^6_{k-1 \tilde{k}}.
\end{align*}
The case where $\beta^6_l$ is Kleshchev is very similar, so we omit it here and finish by looking in detail at the Kleshchev multipartitions $\gamma^6_{kl}$ where $i_k < j_l$ and $k \neq 1$.  First suppose that $j_{l-1}<i_{k-1}<i_k<j_l$.  Then part of the matrix for $\gamma^6_{kl}$ is given by
\[\begin{array}{|DCCCDCCD|} \hline
\elp& i_{k-1}&i_k& &\elp&&j_l &\elp \\ \hline
&1&0&1&\elp&1&1& \\ 
&0&1&0&\elp&0&0& \\ 
&0&0&0&\elp&0&1& \\ \hline
\end{array}\;.\]
Let 
\[L=\begin{array}{|DCCCDCCD|} \hline
\elp& i_{k-1}&i_k& &\elp&&j_l &\elp \\ \hline
&1&0&1&\elp&1&1& \\ 
&1&0&0&\elp &0&0& \\ 
&1&0&0&\elp & 0&0& \\ \hline
\end{array}\]
with all other columns as in $\gamma^6_{kl}$.  Then $\wt(L)=0$ and 
\begin{align*}
G^{(1)}_{j_l} \ldots G^{(1)}_{i_{k+1}}G^{(2)}_{i_k} L & = 
\begin{array}{|DCCCDCCD|} \hline
\elp& i_{k-1}&i_k& &\elp&&j_l &\elp \\ \hline
&1&0&1&\elp&1&1& \\ 
&0&1&0&\elp&0&0& \\ 
&0&0&0&\elp&0&1& \\ \hline
\end{array}
+v \; \begin{array}{|DCCCDCCD|} \hline
\elp& i_{k-1}&i_k&&\elp&& j_l &\elp \\ \hline
&1&0&1&\elp&1&1& \\ 
&0&0&0&\elp&0&1& \\ 
&0&1&0&\elp&0&0& \\ \hline 
\end{array} \\
& \qquad +v \; \begin{array}{|DCCCDCCD|} \hline
\elp& i_{k-1}&i_k& &\elp&&j_l &\elp \\ \hline
&0&1&1&\elp&1&1& \\ 
&1&0&0&\elp&0&0& \\ 
&0&0&0&\elp&0&1& \\ \hline
\end{array}
+v^2 \; \begin{array}{|DCCCDCCD|} \hline
\elp& i_{k-1}&i_k& &\elp&&j_l &\elp \\ \hline
&0&1&1&\elp&1&1& \\ 
&0&1&0&\elp&0&0& \\ 
&1&0&0&\elp&0&0& \\ \hline
\end{array}
\\
&=\gamma^6_{kl}+v\alpha^6_k + v \gamma^6_{k-1l}+v^2 \alpha^6_{k-1}.
\end{align*}
Now suppose that $i_{k-1}<j_{l-1}<i_k<j_l$.  Then part of the matrix for $\gamma^6_{kl}$ is given by
\[\begin{array}{|DCCDCCCDCCD|} \hline
\elp &i_{k-1} &&&j_{l-1}&i_k&&\elp&&j_l& \elp \\ \hline
&1&1&\elp&1&0&1&\elp&1&1& \\ 
&0&1&\elp&1&1&0&\elp&0&0& \\ 
&0&0&\elp&0&0&0&\elp&0&1& \\ \hline
\end{array}\; .\]
Let \[L= \begin{array}{|DCCDCCCDCCD|} \hline
\elp &i_{k-1} &&&j_{l-1}&i_k&&\elp&&j_l& \elp \\ \hline
&1&1&\elp&1&0&1&\elp&1&1& \\ 
&1&1&\elp&1&0&0&\elp&0&0& \\ 
&0&0&\elp&1&0&0&\elp&0&0& \\ \hline
\end{array}\]
with all other columns as in $\gamma^6_{kl}$.  Then $\wt(L)=0$ and 
\begin{align*}
G^{(1)}_{i_{k-1}+1}&\ldots G^{(1)}_{j_{l-1}-1} G^{(1)}_{j_{l-1}}G^{(1)}_{j_l} \ldots G^{(1)}_{i_{k+1}}G^{(2)}_{i_k}L
= \begin{array}{|DCCDCCCDCCD|} \hline
\elp &i_{k-1} &&&j_{l-1}&i_k&&\elp&&j_l& \elp \\ \hline
&1&1&\elp&1&0&1&\elp&1&1& \\ 
&0&1&\elp&1&1&0&\elp&0&0& \\ 
&0&0&\elp&0&0&0&\elp&0&1& \\ \hline
\end{array} \\ 
&+v\; \begin{array}{|DCCDCCCDCCD|} \hline
\elp &i_{k-1} &&&j_{l-1}&i_k&&\elp&&j_l& \elp \\ \hline
&1&1&\elp&1&0&1&\elp&1&1& \\ 
&0&1&\elp&1&0&0&\elp&0&1& \\ 
&0&0&\elp&0&1&0&\elp&0&0& \\ \hline
\end{array} 
+v\; \begin{array}{|DCCDCCCDCCD|} \hline
\elp &i_{k-1} &&&j_{l-1}&i_k&&\elp&&j_l& \elp \\ \hline
&0&1&\elp&1&1&1&\elp&1&1& \\ 
&1&1&\elp&1&0&0&\elp&0&0& \\ 
&0&0&\elp&0&0&0&\elp&0&1& \\ \hline
\end{array} \\
&+v\; \begin{array}{|DCCDCCCDCCD|} \hline
\elp &i_{k-1} &&&j_{l-1}&i_k&&\elp&&j_l& \elp \\ \hline
&1&1&\elp&0&1&1&\elp&1&1& \\ 
&0&1&\elp&1&0&0&\elp&0&1& \\ 
&0&0&\elp&1&0&0&\elp&0&0& \\ \hline
\end{array}  
+v^2\; \begin{array}{|DCCDCCCDCCD|} \hline
\elp &i_{k-1} &&&j_{l-1}&i_k&&\elp&&j_l& \elp \\ \hline
&0&1&\elp&1&1&1&\elp&1&1& \\ 
&1&1&\elp&0&0&0&\elp&0&1& \\ 
&0&0&\elp&1&0&0&\elp&0&0& \\ \hline
\end{array} \\
&= \gamma^6_{kl} + v \alpha_k^6 + v \gamma^6_{k-1l}+v\beta^6_{l-1}+v^2 \gamma_{k-1l-1}.
\end{align*}
Finally, suppose that $i_{k-1}<i_k<j_{l-1}<j_l$.  Then part of the matrix for $\gamma^6_{kl}$ is given by
\[\begin{array}{|DCCDCCDCCDCCD|} \hline
\elp &i_{k-1} &&\elp&&i_k&\elp&j_{l-1}&&\elp&&j_l& \elp \\ \hline
&1&1&\elp&1&0&&1&1&\elp &1& 1&\\ 
&0&1&\elp&1&1&&1&0& \elp& 0&0& \\ 
&0&0&\elp&0&0&&0&0& \elp& 0& 1& \\ \hline
\end{array}\;.\]
Let \[L= \begin{array}{|DCCDCCDCCDCCD|} \hline
\elp &i_{k-1} &&\elp&&i_k&\elp&j_{l-1}&&\elp&&j_l& \elp \\ \hline
&1&1&\elp&1&0&&1&1&\elp &1& 1&\\ 
&1&1&\elp&1&0&&1&0& \elp& 0&0& \\ 
&0&0&\elp&0&0&&1&0& \elp& 0& 0& \\ \hline
\end{array}\]
with all other columns as in $\gamma^6_{kl}$. Then 
\[G^{(1)}_{j_l} \ldots G^{(1)}_{j_{l-1}+2} G^{(1)}_{j_{l-1}+1}G^{(1)}_{i_{k-1}+1}\ldots G^{(1)}_{i_k-1}G^{(1)}_{i_k}L 
= \gamma^6_{kl}+v\gamma^6_{kl-1} + v \gamma^6_{k-1l} + v^2 \gamma^6_{k-1l-1}. \]
\end{proof}

We now use Theorem~\ref{MainTheorem} to prove Theorem~\ref{MainTheorem2}.  Recall that if $\bmu$ and $\blam$ belong to the same  block then $\bmu \leadsto \blam$ if $\blam$ is formed from $\bmu$ by removing a single rim hook from component $k$ and attaching it to component $k+1$, where $k \in \{1,2,\ldots,r-1\}$, and if the leg lengths of these two hooks are equal then we write $\bmu \rel \blam$. 

\begin{ex} 
Let $e=4$ and let $\mc=(0,1,2)$.  Let $\blam=((1),(2),(1^3))$ and let $\bmu=((3),\emptyset,(1^3))$.  Then $\blam$ is formed from $\bmu$ by moving a hook from component 1 to component 2.  The attached hook and the removed hook both have leg length 0, so $\bmu\rel \blam$.  As we can see from the first table in Example~\ref{Ex4}, we also have $\widetilde{\blam} \gdom \bmu \gdom \blam$ and $d_{\bmu\blam}(v)=v$.   
\end{ex}

\begin{lemma} \label{LegLength}
Suppose that $\blam = \Pt(B,M)$ and $\bmu = \Pt(B,L)$ are $3$-multipartitions in an indecomposable core block of weight 2. 
\begin{enumerate}
\item $\bmu \leadsto \blam$ if and only if there exist $i, j \in I$ with $i<j$ and $k \in \{1,2 \}$ such that 
\begin{align*}
L(k,i)=L(k+1,j)=M(k,j)=M(k+1,i)&=0, \\
L(k,j)=L(k+1,i)=M(k,i)=M(k+1,j)&=0, \\
\end{align*}
and $L(k',i')=M(k',i')$ for all possible $k' \neq k,k+1$ and $i' \neq i,j$.  
\item $\bmu \rel \blam$ if and only if $\bmu \leadsto \blam$ and, for the $i$ and $j$ specified in part~(i), we have 
$$ | \{ x \ | \ i < x < j, L(k,x) = 1 \} | = | \{ x \ | \ i<x<j, L(k+1,x) = 1 \} |. $$
\end{enumerate}

\end{lemma}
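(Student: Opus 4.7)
The plan is to work throughout on the abacus interpretation, since the entries of $L$ and $M$ encode exactly the top-bead configurations of $\bmu$ and $\blam$ respectively.  Writing $p_x := \pi(x) + b_{\pi(x)} e$ for the top-bead position associated to column $x$ (with $p_x = x$ when $e = 0$), a short calculation from the definition of $\prec$ shows that $p_x < p_y$ if and only if $x < y$ in the standard order on matrix columns.  Under this dictionary, the top bead on runner $\pi(x)$ of abacus $s$ of $\bmu$ sits at $p_x$ precisely when $L(s,x) = 1$.

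For part~(i), the operation $\bmu \leadsto \blam$ translates into a single bead on abacus $k$ moving up by some distance $h$ (realising the rim hook removed from $\mu^{(k)}$) together with a single bead on abacus $k+1$ moving down by the same $h$ (realising the hook attached to $\mu^{(k+1)}$).  Because $\bmu$ and $\blam$ are multicores in a common core block, each such bead move must take one top position to another, so $L$ and $M$ differ in exactly four entries: two in row $k$ and two in row $k+1$, each forming a swap between two columns.  The key step is to show that the pair of columns affected in row $k$ coincides with the pair affected in row $k+1$: this is exactly the residue-matching condition arising from Proposition~\ref{Block}.  The cells of a rim hook obtained by moving a bead from $p_y$ down to $p_x$ have residues $\{p_y, p_y - 1, \ldots, p_x + 1\}$ modulo $e$, and for the removed and added hooks to carry the same multiset of residues the initial runner on abacus $k$ must match the final runner on abacus $k+1$, and vice versa; this forces a common column pair $\{i,j\}$.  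The inequality $i < j$ is then selected by requiring that the bead on abacus $k$ move to the smaller position $p_i$, equivalently $i < j$ by the dictionary above.  The converse is a direct check that the stated matrix swap realises a bead move of distance $p_j - p_i$ upward on abacus $k$ and of the same distance downward on abacus $k+1$.

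For part~(ii), recall that the leg length of a rim hook equals the number of beads of the original partition lying strictly between the initial and final positions of the bead move realising it.  On abacus $k$ this is the count of beads of $\bmu^{(k)}$ in the open interval $(p_i, p_j)$.  These beads split into two kinds: \emph{base} beads (determined solely by $B$ and the runners, hence identical on abacuses $k$ and $k+1$) and \emph{top} beads at positions $p_c$ for columns $c$ with $L(k,c) = 1$ and $p_i < p_c < p_j$, equivalently $i < c < j$.  Writing the analogous decomposition on abacus $k+1$ and demanding that the two leg lengths agree, the base-bead contributions cancel and one is left precisely with the claimed equality.

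The main obstacle will be the residue-matching step in part~(i): it is what prevents the two bead moves from occurring on genuinely different column pairs, and it requires careful tracking of residues of cells along the bead path (in particular, distinguishing single-hook moves from those that would correspond to a multi-hook decomposition when $h \geq e$).  Once this step is in hand, everything else reduces to routine bookkeeping on the abacus.
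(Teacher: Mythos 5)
Your proposal is correct and follows essentially the same route as the paper: both translate the statement to bead moves on the abacus, invoke the residue-content condition of Proposition~\ref{Block} together with the multicore property to force the two bead shifts to occupy the same pair of columns $\{i,j\}$ with only lowest beads moving, and for part~(ii) split the leg length into a base-tuple contribution (identical in components $k$ and $k+1$, hence cancelling) plus the count of $1$s in rows $k$ and $k+1$ of $L$ strictly between columns $i$ and $j$. The only cosmetic difference is your explicit coordinate $p_x$ and the worry about $e \mid h$, which the multicore hypothesis rules out just as you suggest.
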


\begin{proof}
The condition in part (i) says that row $k+1$ of $M$ is obtained from row $k+1$ of $L$ by moving a $1$ to the right from column $i$ to column $j$, into a position occupied by a $0$. In terms of abaci, this means removing a bead from the bottom of one runner of $\bmu^{(k+1)}$ and adding a bead to the bottom of another runner. Although the ordering $\prec$ will determine precisely which runners of the abacus display are affected, the condition $i<j$ does require that the bead that is added will have a higher $\beta$-number. Consequently, one can regard $\blam^{(k+1)}$ as being obtained from $\bmu^{(k+1)}$ by moving a bead a number of spaces to the right (with the usual convention that moving a bead to the right from runner $e-1$ takes it to runner $0$ of the next row down). This corresponds precisely to adding a rim hook to $\bmu^{(k+1)}$: the number of boxes added is equal to the number of spaces the bead is moved (which will depend on $B$). Similarly, the condition in part (i) says that $\blam^{(k)}$ is obtained from $\bmu^{(k)}$ by removing a rim hook, so if the condition is satisfied then $\bmu \leadsto \blam$. 

In general, if $\blam$ is obtained from $\bmu$ by removing a rim hook from $\bmu^{(k)}$ and adding a rim hook of the same size to $\bmu^{(k+1)}$, then the corresponding abaci could be related by moving beads between different runners in $\bmu^{(k)}$ than $\bmu^{(k+1)}$ (that is, by moving $1$s between different columns in row $k$ of $L$ than in row $k+1$) or by moving beads that are not the lowest beads on their runners (in which case $\blam$ would no longer be a multicore). However, since we are assuming that $\blam$ and $\bmu$ lie in the same block, the bead shift in component $k+1$ must involve the same columns as the bead shift in component $k$ in order to satisfy Proposition~\ref{Block}. Moreover, since we are assuming that the block is a core block, the only beads we can move in $\bmu$ are those that are the lowest on their respective runners. So, if $\bmu \leadsto \blam$ then the condition in part (i) must hold. 

The condition in part (ii) involves counting the number of $1$s between column $i$ and column $j$ of the matrix $L$, in both rows $k$ and $k+1$, which is a number that is connected to the leg length of the rim hooks that we are adding and removing. When we move a bead to the right in the abacus display for $\bmu^{(k+1)}$, thereby adding a rim hook, the leg length of that rim hook is equal to the number of beads past which we move. (A similar statement holds for the leg length of the rim hook removed from $\bmu^{(k)}$.) While this number will be partly determined by $B$, it is at least the case that the way $B$ is used to determine abaci is the same in $\bmu^{(k)}$ as it is in $\bmu^{(k+1)}$, and so the base tuple's contribution to the leg length of the hooks that are added/removed will be the same in components $k$ and $k+1$. Additional beads encountered as we move to the right in $\bmu^{(k+1)}$ will comprise some of those extra beads added to the base abacus to construct $\bmu^{(k+1)}$ (which is to say those beads encoded by $1$s in row $k+1$ of the matrix $L$). Specifically, because of the way that the ordering $\prec$ relates the columns of $L$ to the runners on the abacus, the beads that we encounter will be precisely those that correspond to $1$s between column $i$ and column $j$ of row $k+1$ of $L$. Since a similar statement holds for row $k$, it follows that $\bmu \rel \blam$ if and only if rows $k$ and $k+1$ of $L$ contain an equal number of $1$s between column $i$ and column $j$, which is precisely the condition in part (ii) of the lemma.   \end{proof}

 If $\blam$ is a Kleshchev multipartition, recall that $\widetilde{\blam}=(\blam^{\diamond})'$ is the conjugate of the image of $\blam$ under the generalised Mullineux involution. 
Theorem~\ref{MainTheorem2} was as follows.  

\begin{theorem} \label{MT2}
Suppose that $\blam$ and $\bmu$ are $3$-multipartitions in an indecomposable core block of weight 2 and that $\blam$ is a Kleshchev multipartition.  Then
\[d_{\bmu\blam}(v)=\begin{cases}
1, & \bmu = \blam, \\
v, & \widetilde{\blam} \gdom \bmu \gdom \blam \text{ and } (\bmu \rel \blam \text{ or } \widetilde{\blam} \rel \bmu), \\
v^2, & \bmu = \widetilde{\blam}, \\
0, & \text{otherwise}.  
\end{cases}\]
\end{theorem}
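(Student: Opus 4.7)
The proof is a case-by-case verification against Theorem~\ref{MainTheorem}. In each of the (roughly fifteen) sub-cases enumerated there, $P(\blam)$ expands as a sum of one term with coefficient $1$ (namely $\blam$), several terms with coefficient $v$, and a single term with coefficient $v^2$. Matching this expansion against the formula asserted in Theorem~\ref{MT2} requires three checks: that the unique $v^2$-term equals $\widetilde\blam$; that every $v$-term $\bmu$ lies in the dominance interval $\widetilde\blam \gdom \bmu \gdom \blam$ and satisfies either $\bmu \rel \blam$ or $\widetilde\blam \rel \bmu$; and conversely, that no multipartition meeting both conditions is missing from the $v$-terms.

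The first check is a definition-unpacking. I would compute $\widetilde\blam$ directly on the abacus using the algorithm of~\cite[\S 3.10]{BK:GradedDecomp} for each of the Kleshchev matrices $\alpha^u_k$, $\beta^u_l$, $\gamma^u_{kl}$ listed in Proposition~\ref{Kleshchevs}, and verify in each sub-case that the result is exactly the $v^2$-term produced by Theorem~\ref{MainTheorem}. Once this identification is fixed, the dominance condition $\widetilde\blam \gdom \bmu \gdom \blam$ for each $v$-term is automatic: $\bmu \gedom \blam$ holds because $d_{\bmu\blam}(v) \neq 0$ implies dominance by the cellular theory (recalled just after Proposition~\ref{iInd}), and the reverse inequality $\widetilde\blam \gedom \bmu$ follows by the analogous dominance bound applied at $\widetilde\blam$, together with the fact (visible directly in each formula of Theorem~\ref{MainTheorem}) that every $v$-term appearing is also supported in $P(\widetilde\blam)$ with coefficient $v$.

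The second check uses Lemma~\ref{LegLength}. Inspection shows that in every sub-case of Theorem~\ref{MainTheorem}, each $v$-term $\bmu$ differs from either $\blam$ or $\widetilde\blam$ by a single bead swap involving two columns in two adjacent rows of the matrix, which is exactly the condition of Lemma~\ref{LegLength}(i); the equal-leg-length requirement of Lemma~\ref{LegLength}(ii) then reduces to counting ones between the two affected columns, which can be read off by hand from the displays for $\alpha^u_k$, $\beta^u_l$, $\gamma^u_{kl}$ in Proposition~\ref{IndecompTwoBlocks}.

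The main obstacle is the converse — that no multipartition $\bmu$ related to $\blam$ or to $\widetilde\blam$ by a leg-length-preserving bead swap, and lying in the dominance interval, has been overlooked by Theorem~\ref{MainTheorem}. This is a finite but branching enumeration, depending on the relative order of the $i_k$ and $j_l$ on either side of the pivot positions. Fortunately, the matrices for $u = 2, \ldots, 6$ are obtained from those for $u = 1$ by row permutations (as pointed out in Proposition~\ref{IndecompTwoBlocks}), so the enumeration need only be carried out explicitly in the $u = 1$ sub-cases and then transported to the others, with the obvious adjustment to which adjacent row-pair corresponds to which bead swap in Lemma~\ref{LegLength}.
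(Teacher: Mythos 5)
Your overall strategy coincides with the paper's: read the candidate list from Theorem~\ref{MainTheorem}, identify the unique $v^2$-term with $\widetilde{\blam}$, use Lemma~\ref{LegLength} to match the $v$-terms against the $\rel$ condition, and handle the converse by enumerating the dominance interval. One step of your plan, however, fails as stated. Carrying out the enumeration only for $u=1$ and ``transporting'' it to $u=2,\ldots,6$ by row permutation does not work: the dominance order $\gdom$, the relation $\leadsto$ (which moves a rim hook from component $k$ to component $k+1$, and so depends on which components are adjacent and in which order), and the map $\blam\mapsto\widetilde{\blam}$ are all sensitive to the ordering of the components. In particular, a bead swap between adjacent rows for $u=1$ can become a swap between non-adjacent rows after permutation, so it no longer witnesses a $\leadsto$ relation at all, and the dominance interval itself changes. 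The example following Theorem~\ref{MainTheorem} makes exactly this point (``this leads to a non-trivial rearrangement of the decomposition matrices''), and accordingly the paper sets up the auxiliary relations (Lemmas~\ref{URelns1} and~\ref{URelns2}) and the verification tables afresh for $u=2,3$, with the remaining values of $u$ treated as genuinely separate, if routine, verifications. Your proof needs to do the same.

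Two smaller points. Your justification of the upper dominance bound $\widetilde{\blam}\gedom\bmu$ via ``the $v$-terms are supported in $P(\widetilde{\blam})$'' is not available: $P$ is defined only for Kleshchev multipartitions and $\widetilde{\blam}$ is in general not Kleshchev, nor is this fact visible in Theorem~\ref{MainTheorem}. The correct tool is Theorem~\ref{MullConj}(ii), which gives $d_{\bmu\blam}(v)=0$ unless $\widetilde{\blam}\gedom\bmu\gedom\blam$; this is how the paper restricts the enumeration to the dominance interval. Relatedly, you need not run the Mullineux algorithm on the abacus to identify $\widetilde{\blam}$: Theorem~\ref{MullConj}(i) gives $d_{\widetilde{\blam}\blam}(v)=v^{2}$ in a weight-two block, and since each case of Theorem~\ref{MainTheorem} produces exactly one $v^2$-term, that term must be $\widetilde{\blam}$. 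This is the route the paper takes and it saves a substantial computation.
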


The result will follow from a case-by-case analysis of Theorem~\ref{MainTheorem}.  First recall the following result.

\begin{theorem} [{\cite[Corollary 2.4]{Fayers:WeightII},\cite[Remark~3.19]{BK:GradedDecomp}}] \label{MullConj}
Suppose that $\blam$ and $\bmu$ are multipartitions in the same block of weight $w$ and that $\blam$ is a Kleshchev multipartition. 
\begin{enumerate}
\item We have $d_{\blam\blam}(v)=1$ and $d_{\widetilde{\blam}\blam}(v)=v^{w}$.
\item We have $d_{\bmu \blam}(v)=0$ unless $\widetilde{\blam} \gedom \bmu \gedom \blam$.
\end{enumerate}
\end{theorem}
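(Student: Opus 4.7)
The equality $d_{\blam\blam}(v)=1$ is immediate from cellularity of $\h$: the Kleshchev multipartition $\blam$ indexes the graded simple quotient $D^\blam$ of $S^\blam$, occurring in internal degree zero with multiplicity one. The inequality $\bmu\gedom\blam$ whenever $d_{\bmu\blam}(v)\neq 0$ is the upper-triangularity statement already recorded in the excerpt via \cite{GL,Ariki,BKW:GradedSpecht}. Thus only two substantive claims remain: that $d_{\widetilde{\blam}\blam}(v)=v^{w}$, and that $d_{\bmu\blam}(v)\neq 0$ forces $\widetilde{\blam}\gedom\bmu$.

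The plan is to leverage a graded Mullineux duality. Specifically, one invokes a contravariant self-equivalence $\sigma$ of the category of graded $\h$-modules, constructed via the Brundan-Kleshchev identification of $\h$ with a cyclotomic KLR algebra paired with its natural sign-like anti-involution. On cellular data $\sigma$ acts by
\begin{align*}
\sigma(S^\blam) \;&\cong\; S^{\blam'}\langle a(\blam)\rangle, &
\sigma(D^\blam) \;&\cong\; D^{\blam^{\diamond}}\langle b(\blam)\rangle,
\end{align*}
where $\blam'$ denotes the multipartition obtained by conjugating each component and reversing the component order (relative to the negated multicharge), $\blam^{\diamond}$ is the generalised Mullineux image, and $a(\blam), b(\blam)$ are computable grading shifts. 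Taking graded composition multiplicities through $\sigma$ and checking that the net shift $a(\bmu)-b(\blam)$ collapses, via the defect formula, to the weight $w=\wt(\blam)$, one arrives at the fundamental symmetry
\begin{equation*}
d_{\bmu\blam}(v)\;=\;v^{w}\,d_{\bmu'\,\widetilde{\blam}'}(v^{-1}),
\end{equation*}
valid for every $\bmu$ in the block and every Kleshchev $\blam$, after re-identifying the right-hand side with decomposition numbers for the algebra carrying the negated multicharge (which is isomorphic to $\h$ up to relabelling).

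Both remaining claims now drop out cleanly. Since $\widetilde{\blam}'=\blam^{\diamond}$ is Kleshchev, setting $\bmu=\widetilde{\blam}$ in the symmetry gives $d_{\widetilde{\blam}\blam}(v)=v^{w}d_{\blam^{\diamond}\blam^{\diamond}}(v^{-1})=v^{w}\cdot 1=v^{w}$, as required. For the upper dominance bound, suppose $d_{\bmu\blam}(v)\neq 0$; then $d_{\bmu'\widetilde{\blam}'}(v^{-1})\neq 0$, hence $d_{\bmu'\widetilde{\blam}'}(v)\neq 0$ (nonvanishing is preserved under $v\mapsto v^{-1}$), and upper-triangularity applied in the reversed algebra yields $\bmu'\gedom\widetilde{\blam}'$. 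Since the operation of conjugating each component and reversing component order reverses the dominance order on $\Parts$, this is equivalent to $\widetilde{\blam}\gedom\bmu$, and together with $\bmu\gedom\blam$ from cellularity we obtain $\widetilde{\blam}\gedom\bmu\gedom\blam$. The main obstacle is establishing the graded Mullineux symmetry with the sharp shift $v^{w}$: the ungraded version is classical in type $A$ (via the sign representation) and was extended to the Ariki-Koike setting by Fayers, but upgrading it to the graded setting requires the KLR realisation of $\h$ together with a careful analysis of $\sigma$ on a chosen graded cellular basis so that the degrees $a(\blam)$ and $b(\blam)$ can be computed and shown to differ by $\wt(\blam)$.
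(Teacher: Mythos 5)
The paper does not prove this statement at all: it is imported verbatim from the cited sources, namely Fayers' canonical-basis argument in \cite[Corollary 2.4]{Fayers:WeightII} and the graded refinement in \cite[Remark~3.19]{BK:GradedDecomp}, so there is no internal proof to compare against. Your outline is essentially the argument of those references: the identity $d_{\bmu\blam}(v)=v^{w}\,d_{\bmu'\,\widetilde{\blam}'}(v^{-1})$ is exactly the graded Mullineux symmetry they establish (note $\widetilde{\blam}'=\blam^{\diamond}$, so the right-hand side is a genuine decomposition number for the conjugate multicharge), and your two deductions from it --- taking $\bmu=\widetilde{\blam}$ to get $v^{w}\cdot d_{\blam^\diamond\blam^\diamond}(v^{-1})=v^{w}$, and combining upper-triangularity in the conjugate algebra with the fact that componentwise conjugation plus order reversal anti-preserves $\gedom$ --- are both sound. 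The one place where your write-up is a sketch rather than a proof is the point you yourself flag: pinning down the grading shifts $a(\blam)$, $b(\blam)$ and showing their difference is the defect $w$. That computation is precisely the content of the Brundan--Kleshchev result being cited, so within the logic of this paper your argument is complete modulo the same external input the authors rely on; as a from-scratch proof it would still need that degree bookkeeping carried out on a graded cellular basis.
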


Suppose that $\blam$ is a Kleshchev 3-multipartition in an indecomposable core block of weight 2.  From Theorem~\ref{MullConj} and Theorem~\ref{MainTheorem} we can read off $\widetilde{\blam}$, and we know that $d_{\bmu\blam}(v)=0$ unless $\widetilde{\blam} \gedom \bmu \gedom \blam$, where $d_{\blam\blam}(v)=1$ and $d_{\widetilde{\blam}\blam}(v)=v^{2}$.  So to prove Theorem~\ref{MT2}, we take a set containing all partitions $\btau$ such that $\widetilde{\blam} \gdom \btau \gdom \blam$ and use Lemma~\ref{LegLength} to show that the partitions $\bmu$ in that set satisfy either 
$\bmu \rel \blam$ or $\widetilde{\blam} \rel \bmu$ if and only if the coefficient of $\bmu$ in $P(\blam)$ is $v$.  We remark that it turns out to be the case that if $\bmu \leadsto \blam$ and $\widetilde{\blam} \leadsto \bmu$ then $\bmu \rel \blam$ if and only if $\widetilde{\blam} \rel \bmu$.

Below, we give the details for the cases $u=2$ and $u=3$.  The notation is that used in Theorem~\ref{MainTheorem}.  We leave verification of the other four cases as an exercise for the reader.    

\begin{proof}[Proof of Theorem~\ref{MT2} when $u=2$ and $u=3$]
Suppose that $u=2$ or $u=3$.  Recall that the partitions are given by:  
\[\begin{array}{c|ccccc|c|cccc|c|cccc|} \cline{2-6} \cline{8-11} \cline{13-16}
&x&y&z&1_y&1_z&& x&y+1&z&1&& x&y&z+1&1 \\ \cline{2-6} \cline{8-11} \cline{13-16}
\gamma^2=&0&1&1&0&1 & \beta^2=& 0&1&1&0 &\alpha^2=& 0&1&1&0 \\
&0&0&0&0&1 & & 0&0&0&1 && 0&0&0&1 \\
&0&0&1&1&0 && 0&0&1&1 && 0&0&1&0\\
\cline{2-6} \cline{8-11} \cline{13-16}
\end{array}\]
\[\begin{array}{c|ccccc|c|cccc|c|cccc|} \cline{2-6} \cline{8-11} \cline{13-16}
&x&z&y&1_z&1_y&& x&z+1&y&1&& x&z&y+1&1 \\ \cline{2-6} \cline{8-11} \cline{13-16}
\gamma^3=&0&0&1&1&0 & \alpha^3=& 0&0&1&1 &\beta^3=& 0&0&1&0\\
&0&1&1&0&1 && 0&1&1&0 && 0&1&1&0 \\ 
&0&0&0&0&1 & & 0&0&0&1 && 0&0&0&1 \\\cline{2-6} \cline{8-11} \cline{13-16}
\end{array}\]

The relations in the following two lemmas are easy to check.  

\begin{lemma} \label{URelns1}
We have the following relations:
\begin{itemize}
\item $\alpha^u_k \gdom \alpha^u_{k+1}$ and $\beta^u_l \gdom \beta^u_{l+1}$.  
\item If $i_k < j_l$ then $\alpha^u_k \gdom \beta^u_l$ and if $j_l < i_k$ then $\beta^u_l \gdom \alpha^u_k$.
\item $\gamma^u_{k,l+1} \gdom \gamma^u_{k,l}\gdom \gamma^u_{k+1,l}$ and if $\gamma^u_{k,l}\gedom \gamma^u_{k',l'}$ then $k \leq k'$.
\item If $\gamma^u_{k,l}\gdom \alpha^u_v$ then $k \leq v$ and if $\alpha^u_v \gdom \gamma^u_{k,l}$ then $v \leq k$.
\item If $\gamma^u_{k,l} \gdom \alpha^u_k$ then $i_k<j_l$ and if $\alpha^u_k \gdom \gamma^u_{k,l}$ then $j_l < i_k$.    
\item If $\gamma^u_{k,l}\gdom \beta^u_w$ then $i_k < j_w$ and if $\beta^u_w \gdom \gamma^u_{k,l}$ then $j_w < i_k$.  
\end{itemize}
\end{lemma}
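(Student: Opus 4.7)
The plan is to translate each matrix $M \in \mathcal{D}^u$ into its multipartition $\Pt(B,M)$ and verify each dominance inequality directly from the partial-sum definition of $\gedom$. By Lemma~\ref{beadswap}, any two matrices in the block differ by a sequence of bead swaps, and for each relation stated in the lemma only one or two such swaps are involved. The key dictionary is that under $\Pt(B,\cdot)$, a column of the matrix corresponds (via the permutation $\pi$ determined by $B$) to a runner of the abacus, and the $\prec$ ordering on columns matches the ordering of $\beta$-numbers on that runner; hence passing a bead from a smaller column to a larger one strictly increases the size of the relevant component of the multipartition.

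For the affirmative dominance statements (the first, second and fourth bullets, and one direction each of the third, seventh and eighth), each pair of matrices differs by a single bead swap between two columns $i<j$ of the matrix and two rows $s_1<s_2$. If we swap so that the lower-numbered row $s_1$ moves its bead from column $i$ to column $j$ while the higher-numbered row $s_2$ moves its bead from $j$ to $i$, then every partial sum $\sum_{r=1}^{s'}|\bmu^{(r)}|$ for $s_1\leq s'<s_2$ strictly increases and all remaining partial sums are unchanged. This yields $\gdom$ in each listed case, with the specific row identifications dictated by the shape of $\alpha^u$, $\beta^u$, or $\gamma^u$ for $u=2,3$.

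For the one-sided implications, my strategy is to exhibit, in each excluded case, a partial sum where $\gedom$ fails. For example, to rule out $k>k'$ in the fifth bullet, I will compare the two matrices at the row containing the distinguished ``$1_y$''-entry of $\gamma^u$: decreasing the gamma-index from $k$ to $k'$ shifts that bead to a strictly smaller column, strictly decreasing the corresponding partial sum and contradicting $\gamma^u_{k,l} \gedom \gamma^u_{k',l'}$. The sixth bullet is handled the same way, at the row whose bead type is common to $\gamma^u$ and $\alpha^u$. For the seventh and eighth bullets, the sign of $j_l-i_k$ (respectively $j_w-i_k$) determines the direction of the bead swap between $\gamma^u_{k,l}$ and $\alpha^u_k$ (respectively $\beta^u_w$), and thus which direction of dominance is possible.

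The main obstacle is not conceptual but notational: one must carefully track, across the six matrix shapes from Proposition~\ref{IndecompTwoBlocks}, which row of the matrix is affected by each swap. However, the cases $u=2$ and $u=3$ differ only by a permutation of the three rows, so a single case analysis (say for $u=2$) suffices modulo relabelling, and all listed relations follow from the same mechanical reading of partial sums off the abacus picture.
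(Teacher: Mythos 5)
The paper offers no argument for this lemma at all (it is dismissed as ``easy to check''), so your proposal is filling a gap rather than paralleling an existing proof. Your basic strategy --- identify each pair of matrices as differing by one or two bead swaps, translate a swap into ``one component gains a rim hook, another loses one,'' and read off dominance from partial sums --- is the right one and does yield all the stated relations. Two points, however, need repair before the argument is actually complete.

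First, the order $\gedom$ is not determined by the partial sums $\sum_{s\le t}|\bmu^{(s)}|$ alone: the definition also requires the row-refined inequalities $\sum_{s<t}|\bmu^{(s)}|+\sum_{x\le z}\bmu^{(t)}_x \geq \sum_{s<t}|\blam^{(s)}|+\sum_{x\le z}\blam^{(t)}_x$ for every $z$, and in general a multipartition with larger component sizes need not dominate (compare $(1,1,1)$ with $(2)$ in a single component). You must add the observation that when row $s_1$ moves its bead to a $\prec$-larger column the component $\bmu^{(s_1)}$ \emph{contains} $\blam^{(s_1)}$ as a Young diagram, so its row partial sums all weakly increase, while for the losing component $s_2$ the deficit $\sum_{x\le z}\blam^{(s_2)}_x-\sum_{x\le z}\bmu^{(s_2)}_x$ is at most the hook length $h$, which is exactly the surplus already accumulated in components $<s_2$. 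With that in place the affirmative bullets follow, and the one-sided bullets follow by exhibiting a component whose total size strictly decreases (for $u=2$ this is component $1$, not the row carrying the distinguished $1_y$-entry, which for $u=2,3$ sits in a lower row and gives no information). Second, your reduction of $u=3$ to $u=2$ ``modulo relabelling'' is not legitimate: $\gedom$ is very much not invariant under permuting the components (e.g.\ $((1),\emptyset)\gdom(\emptyset,(1))$ but not after swapping). The relations do hold for both values of $u$, but for $u=3$ the relevant bead swaps involve different pairs of rows (e.g.\ rows $2$ and $3$ rather than $1$ and $2$ for $\alpha^u_k\gdom\alpha^u_{k+1}$), and one must check in each case that the gaining row is still the earlier one. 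This is a short verification, but it is a genuinely separate one, not a relabelling.
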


\begin{lemma} \label{URelns2}
We have the following relations:
\begin{itemize}
\item If $k<k'$ then $\alpha^u_k \leadsto \alpha^u_{k'}$ and if 
$l<l'$ then $\beta^u_l \leadsto \beta^u_{l'}$.
\item If $i_k < j_l$ then $\alpha^u_k \leadsto \beta^u_l$ and if $j_l < i_k$ then $\beta^u_l \leadsto \alpha^u_k$.
\item We have $\gamma^u_{k,l} \leadsto \alpha^u_v$ if and only if $v=k$ and $i_k < j_l$ and we have $\alpha^u_v \leadsto \gamma^u_{k,l}$ if and only if $v=k$ and $j_l < i_k$.
\item If $l'<l$ then $\gamma^u_{k,l} \leadsto \gamma^u_{k,l'}$ and if $k<k'$ then $\gamma^u_{k,l} \leadsto \gamma^u_{k',l}$.  
\item We have $\gamma^u_{k,l}\not\leadsto \beta^u_w$ and $\beta^u_w \not\leadsto \gamma^u_{k,l}$ in all cases.  
\end{itemize}
\end{lemma}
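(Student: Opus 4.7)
The plan is to derive each of the five bullet points by a direct appeal to Lemma~\ref{LegLength}, which characterises $\bmu \leadsto \blam$ by requiring that the matrices $L, M$ representing $\bmu, \blam$ agree in all positions except $(k_0, i), (k_0, j), (k_0+1, i), (k_0+1, j)$ for some $k_0 \in \{1, 2\}$ and some $i < j$ in $I$, with the prescribed $0/1$ signature. So for each of the stated relations I would write down the two matrices using the definitions of $\alpha^u_k$, $\beta^u_l$, $\gamma^u_{k,l}$ given just before Lemma~\ref{URelns1}, identify the columns and rows where they differ, and verify (or refute) the resulting configuration against Lemma~\ref{LegLength}.

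The $\alpha \leadsto \alpha$, $\beta \leadsto \beta$ and $\alpha \leftrightarrow \beta$ assertions of the first two bullets are handled quickly. For $\alpha^u_k \leadsto \alpha^u_{k'}$ with $k < k'$ the definitions force the two matrices to agree outside columns $i_k, i_{k'}$, where the types $1(\alpha^u)$ and $y(\alpha^u)$ are exchanged; inspecting Proposition~\ref{IndecompTwoBlocks} for $u = 2, 3$ shows these two column types differ only in rows $1$ and $2$, so Lemma~\ref{LegLength} applies with $k_0 = 1$ and $(i,j) = (i_k, i_{k'})$. The $\beta \leadsto \beta$ and $\alpha \leftrightarrow \beta$ cases are identical in structure: only two columns differ, the row changes occur in rows $1$ and $2$, and the order on $I$ selects the direction of the arrow via the constraint $i < j$.

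For the $\gamma$-cases I would carry out the same type of analysis, but with more sub-cases. For $\gamma^u_{k,l} \leadsto \alpha^u_v$: I would first observe that $\alpha^u_v$ has only one ``$1$''-column (at $i_v$) while $\gamma^u_{k,l}$ has two (the $1_y$-column at $i_k$ and the $1_z$-column at $j_l$), so unless $v = k$ the matrices disagree in at least three columns and $\leadsto$ fails. When $v = k$, only columns $i_k$ and $j_l$ differ, the relevant rows are $2$ and $3$, and Lemma~\ref{LegLength} applies with $k_0 = 2$; the ordering of $i_k$ and $j_l$ fixes whether the move is $\gamma \to \alpha$ or $\alpha \to \gamma$. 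The same argument applied to columns $j_l, j_{l'}$ handles $\gamma^u_{k,l} \leadsto \gamma^u_{k,l'}$ with $l' < l$. The negative statements $\gamma \not\leadsto \beta$ and $\beta \not\leadsto \gamma$ follow at once by observing that the matrices always differ in at least three columns.

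The step demanding the most care is the sub-claim $\gamma^u_{k,l} \leadsto \gamma^u_{k',l}$ with $k < k'$: direct inspection shows the matrices differ only at columns $i_k, i_{k'}$, but the row changes occur at rows $1$ and $3$, which are not consecutive for $u = 2, 3$, so a naive appeal to Lemma~\ref{LegLength} does not immediately settle this case and it warrants a more delicate analysis (tracking which partition-level hook move is realised by this matrix change and, ultimately, how this interacts with the $\widetilde{\cdot}$ involution used in the proof of Theorem~\ref{MainTheorem2}). Once the column-type comparisons are compiled in a single table, each assertion reduces to a short verification; the real cost of the proof is the bookkeeping across the three multipartition families $\alpha^u, \beta^u, \gamma^u$ and the two values $u = 2, 3$.
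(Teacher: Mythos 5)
The paper gives no argument for this lemma beyond the remark that the relations ``are easy to check,'' so your strategy of testing each bullet directly against Lemma~\ref{LegLength} is the natural one, and your bookkeeping for the $\alpha/\beta$ relations, for $\gamma^u_{k,l}\leftrightarrow\alpha^u_v$ (rows $2$ and $3$) and for $\gamma^u_{k,l}\leadsto\gamma^u_{k,l'}$ with $l'<l$ (again rows $2$ and $3$) is correct. But the case you flag, $\gamma^u_{k,l}\leadsto\gamma^u_{k',l}$ for $k<k'$, is a genuine gap rather than a ``delicate sub-case,'' and you cannot leave it open: your own computation already decides it. For $u=2$ one has $1_y(\gamma^2)=(0\;0\;1)^T$ and $y(\gamma^2)=(1\;0\;0)^T$ (and similarly $1_y(\gamma^3)=(0\;1\;1)^T$, $y(\gamma^3)=(1\;1\;0)^T$), so $\gamma^u_{k,l}$ and $\gamma^u_{k',l}$ differ only in columns $i_k,i_{k'}$ and only in rows $1$ and $3$; at the level of multipartitions a rim hook is removed from component $1$ and attached to component $3$. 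Since $\leadsto$ is defined by moving a hook from component $k$ to component $k+1$, and the ``only if'' direction of Lemma~\ref{LegLength} forces the two affected rows to be adjacent, this relation simply \emph{fails} for $u=2,3$ — no further analysis of the same matrices will produce it. To finish you must take a stance: either read $\leadsto$ as permitting a hook to pass between any pair of components $s<t$ (which would require restating Lemma~\ref{LegLength} for arbitrary row pairs), or record that this half of the fourth bullet is false for $u=2,3$ and check that the proof of Theorem~\ref{MT2} is unharmed because, in every table entry where $\gamma^u_{k-1,l}\leadsto\gamma^u_{k,l}$ is invoked, the other disjunct $\widetilde{\blam}\rel\bmu$ (an $l$-shift, hence a legitimate rows $2$--$3$ move) is available.

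A smaller point: your justification of the last bullet, that the matrices of $\gamma^u_{k,l}$ and $\beta^u_w$ ``always differ in at least three columns,'' breaks down when $w=l$. There only the two columns $i_k$ and $j_l$ differ ($1_y(\gamma^u)$ versus $y(\gamma^u)$, and $1_z(\gamma^u)$ versus $1(\beta^u)$); the correct reason in that sub-case is once more that the discrepancies sit in rows $1$ and $3$, which are not adjacent, so Lemma~\ref{LegLength} still rules the relation out.
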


We now look at all the Kleshchev multipartitions in the block.  

\begin{enumerate}
\item Suppose that $\blam = \alpha^u_k$ is Kleshchev. Let $\tilde{k}$ be minimal such that $i_k \pre j_{\tilde{k}}$.  
\begin{itemize}
\item Suppose $i_{k-1} \pre j_{\tilde{k}-1}$.  Then $\widetilde{\blam}=\gamma^u_{k-1,\tilde{k}}$.  From Lemma~\ref{URelns1} the only partitions that can have the property that $\widetilde{\blam} \gdom \bmu \gdom \blam$ belong to the set
\[\{\alpha_{k-1}\} \cup \{\beta_l \mid i_{k-1}<j_l<i_k\} \cup \{\gamma_{k-1,l}\mid l<\tilde{k}\} \cup \{\gamma_{k,l} \mid i_k<j_l\}.\]
\[\begin{array}{|ll|cc|cc|c|} \hline 
\bmu& & \bmu \leadsto \alpha^u_k & \bmu \rel\alpha^u_k& \gamma^u_{k-1,\tilde{k}} \leadsto \bmu &\gamma^u_{k-1,\tilde{k}}\rel \bmu & 
\raisebox{0pt}[13pt][0pt]{$\bmu \rel \blam / \widetilde{\blam} \rel \bmu$} \\ \hline 
\renewcommand{\arraystretch}{2} 
\alpha_{k-1}& & \checkmark & \times & \checkmark & \times &\times \\ 
\beta_l &:i_{k-1}<j_l<\tilde{k}-1 & \checkmark & \times &\times && \times \\
\beta_{\tilde{k}-1} && \checkmark & \checkmark & \times & &\checkmark \\
\gamma_{k-1,l} &:l<\tilde{k}-1 &\times &&\checkmark &\times & \times \\
\gamma_{k-1,\tilde{k}-1} && \times && \checkmark &\checkmark& \checkmark \\
\gamma_{k,l} &:i_k<j_{\tilde{k}} & \checkmark & \times &\times&& \times \\ 
\gamma_{k,\tilde{k}} && \checkmark & \checkmark & \checkmark & \checkmark & \checkmark\\ \hline 
\end{array}\]

\item Suppose $j_{\tilde{k}-1} \pre i_{k-1}$.  Then $\widetilde{\blam}=\gamma^u_{k-1,\tilde{k}}$.  From Lemma~\ref{URelns1} the only partitions that can have the property that $\widetilde{\blam} \gdom \bmu \gdom \blam$ belong to the set
\[\{\alpha^u_{k-1}\} \cup \{\gamma^u_{k-1,l}\mid l< \tilde{k}\} \cup \{\gamma^u_{k,l} \mid i_k<j_l\}.\]
\[\begin{array}{|ll|cc|cc|c|} \hline 
\bmu& & \bmu \leadsto \alpha^u_k & \bmu \rel \alpha^u_k& \gamma^u_{k-1,\tilde{k}} \leadsto \bmu & \gamma^u_{k-1,\tilde{k}}\rel \bmu & 
\raisebox{0pt}[13pt][0pt]{$\bmu \rel \blam / \widetilde{\blam} \rel \bmu$} \\ \hline
\alpha_{k-1} && \checkmark & \checkmark & \checkmark&\checkmark &\checkmark \\
\gamma_{k-1,l}&: l<\tilde{k} &\times &&\checkmark & \times & \times \\
\gamma_{k,l}&: i_k<j_{\tilde{k}} & \checkmark & \times &\times&& \times \\ 
\gamma_{k,\tilde{k}} && \checkmark & \checkmark & \checkmark & \checkmark & \checkmark\\ \hline 
\end{array}\]
\end{itemize}
\item Suppose that $\blam = \beta^u_l$ is Kleshchev. Let $\tilde{l}$ be maximal such that $i_{\tilde{l}} \pre j_{l}$.  
\begin{itemize}
\item Suppose $j_{l-1} \pre i_{\tilde{l}}$.  Then $\widetilde{\blam}=\gamma^u_{\tilde{l},l}$.  From Lemma~\ref{URelns1} the only partition that can have the property that $\widetilde{\blam} \gdom \bmu \gdom \blam$ is $\alpha^u_{\tilde{l}}$.  
\[\begin{array}{|ll|cc|cc|c|} \hline 
\bmu& & \bmu \leadsto \beta^u_l & \bmu\rel\beta^u_l& \gamma^u_{\tilde{l},l} \leadsto \bmu & \gamma^u_{\tilde{l},l}\rel\bmu &
\raisebox{0pt}[13pt][0pt]{$\bmu \rel \blam / \widetilde{\blam} \rel \bmu$}
\\ \hline
\alpha^u_{\tilde{l}} & &\checkmark & \checkmark &\checkmark & \checkmark & \checkmark \\ \hline
\end{array}\]
\item Suppose that $i_{\tilde{l}}\pre j_{l-1}$.  Then $\widetilde{\blam}=\gamma^u_{\tilde{l},l}$.  From Lemma~\ref{URelns1} the only partitions that can have the property that $\widetilde{\blam} \gdom \bmu \gdom \blam$ 
belong to the set
\[\{\alpha^u_{\tilde{l}}\} \cup \{\beta^u_{w}\mid i_{\tilde{l}}<j_w<j_l\} \cup \{\gamma^u_{\tilde{l},w} \mid w<l\}.
\]
\[\begin{array}{|ll|cc|cc|c|} \hline 
\bmu& & \bmu \leadsto \beta^u_l & \bmu\rel\beta^u_l& \gamma^u_{\tilde{l},l} \leadsto \bmu & \gamma^u_{\tilde{l},l}\rel \bmu & 
\raisebox{0pt}[13pt][0pt]{$\bmu \rel \blam / \widetilde{\blam} \rel \bmu$} \\ \hline
\alpha^u_{\tilde{l}}& & \checkmark & \times  &\checkmark & \times & \times \\ 
\beta^u_w & : i_{\tilde{l}}<j_w <j_{l-1} & \checkmark & \times &\times& & \times \\
\beta^u_{l-1} &&\checkmark & \checkmark & \times &&\checkmark \\
\gamma^u_{\tilde{l},w} & : w<l-1 & \times && \checkmark & \times & \times \\
\gamma^u_{\tilde{l},l-1} &&\times &&\checkmark & \checkmark & \checkmark \\ \hline 
\end{array}\]
\end{itemize}
\item Suppose that $\blam=\gamma^u_{k,l}$ is Kleshchev.
\begin{itemize}
\item Suppose that $i_{k-1}\pre i_k\pre j_l \pre j_{l+1}$.  
Then $\widetilde{\blam}=\gamma^u_{k-1,l+1}$.  From Lemma~\ref{URelns1} the only partitions that can have the property that $\widetilde{\blam} \gdom \bmu \gdom \blam$ 
belong to the set
\[\{\alpha^u_{k-1}\} \cup \{\beta^u_{w}\mid i_{k-1}<j_w<i_k\} \cup \{\gamma^u_{k-1},w\} \mid w<l+1\} \cup \{\gamma_{k,w}\mid l<w\}.
\]
However, further investigation shows that if $i_{k-1}<j_w<i_k$ then $\beta_w \not \gdom \gamma_{k,l}$. 

\[\begin{array}{|ll|cc|cc|c|} \hline 
\bmu& & \bmu \leadsto \gamma^u_{k,l} & \bmu\rel \gamma^u_{k,l}& \gamma^u_{k-1,l+1} \leadsto \bmu & \gamma^u_{k-1,l+1}\rel \bmu &
\raisebox{0pt}[13pt][0pt]{$\bmu \rel \blam / \widetilde{\blam} \rel \bmu$}\\ \hline
\alpha^u_{k-1} &&\times && \checkmark & \times & \times \\
\gamma^u_{k-1,w} & :w<l &  \times&& \checkmark & \times & \times \\
\gamma^u_{k-1,l} & & \checkmark&\checkmark&\checkmark&\checkmark & \checkmark \\
\gamma^u_{k,l+1} & & \checkmark&\checkmark&\checkmark&\checkmark&\checkmark \\
\gamma^u_{k,w} & :w>l+1 & \checkmark & \times & \times& &\times \\ \hline   
\end{array}\] 
\item Suppose that $j_l \pre j_{l+1}\pre i_{k-1}\pre i_k$.  
Then $\widetilde{\blam}=\gamma^u_{k-1,l+1}$.  From Lemma~\ref{URelns1} the only partitions that can have the property that $\widetilde{\blam} \gdom \bmu \gdom \blam$ 
belong to the set
\[\{\alpha^u_{k}\} \cup \{\beta^u_{w}\mid i_{k-1}<j_w<i_k\} \cup \{\gamma^u_{k-1},w\} \mid w<l+1\} \cup \{\gamma_{k,w}\mid l<w\}.
\]
However, further investigation shows that if $i_{k-1}<j_w<i_k$ then $\gamma_{k-1,l+1} \not \gdom \beta_w$. 

\[\begin{array}{|ll|cc|cc|c|} \hline 
\bmu& & \bmu \leadsto \gamma^u_{k,l} & \bmu\rel \gamma^u_{k,l}& \gamma^u_{k-1,l+1} \leadsto \bmu & \gamma^u_{k-1,l+1}\rel \bmu & 
\raisebox{0pt}[13pt][0pt]{$\bmu \rel \blam / \widetilde{\blam} \rel \bmu$}
\\ \hline
\alpha^u_{k} && \checkmark & \times & \times &&\times \\
\gamma^u_{k-1,w} & :w<l & \times&& \checkmark & \times & \times \\
\gamma^u_{k-1,l} & & \checkmark&\checkmark&\checkmark&\checkmark & \checkmark \\
\gamma^u_{k,l+1} & & \checkmark&\checkmark&\checkmark&\checkmark&\checkmark \\
\gamma^u_{k,w} & :w>l+1 & \checkmark & \times &\times & &\times \\ \hline   
\end{array}\] 
\item Suppose that $j_l\pre i_{k-1}\pre j_{l+1} \pre i_k$.
Then $\widetilde{\blam}=\alpha_{k-1}$.  From Lemma~\ref{URelns1} the only partitions that can have the property that $\widetilde{\blam} \gdom \bmu \gdom \blam$ 
belong to the set
\[\{\alpha^u_{k}\} \cup \{\beta^u_{w}\mid i_{k-1}<j_w<i_k\} \cup \{\gamma^u_{k-1},w\} \mid j_w<i_{k-1}\} \cup \{\gamma_{k,w}\mid l<w\}.
\]
\[\begin{array}{|ll|cc|cc|c|} \hline 
\bmu& & \bmu \leadsto \gamma^u_{k,l} & \bmu\rel \gamma^u_{k,l}& \alpha_{k-1} \leadsto \bmu & \alpha_{k-1}\rel \bmu & 
\raisebox{0pt}[13pt][0pt]{$\bmu \rel \blam / \widetilde{\blam} \rel \bmu$}
\\ \hline
\alpha^u_{k} &&\checkmark&\times& \checkmark & \times & \times \\
\beta^u_{l+1} && \times && \checkmark & \checkmark & \checkmark \\ 
\beta^u_{w} & :j_{l+1}<j_w<i_k & \times&& \checkmark & \times & \times \\
\gamma^u_{k-1,w} & : j_w<i_{k-1} & \times&& \checkmark & \times & \times \\
\gamma^u_{k-1,l} & &  \checkmark&\checkmark&\checkmark&\checkmark & \checkmark \\ 
\gamma^u_{k,l+1} &  &  \checkmark&\checkmark&\times& & \checkmark \\ 
\gamma^u_{k,w} & :l+1<w & \checkmark & \times &\times& &\times \\ \hline
\end{array}\]
\item Suppose that $j_l\pre i_{k-1}\pre i_k \pre j_{l+1}$.
Then $\widetilde{\blam}=\alpha_{k-1}$.  From Lemma~\ref{URelns1} the only partitions that can have the property that $\widetilde{\blam} \gdom \bmu \gdom \blam$ 
belong to the set
\[\{\alpha^u_{k}\} \cup \{\gamma^u_{k-1},w\} \mid j_w<i_{k-1}\} \cup \{\gamma_{k,w}\mid l<w\}.
\]
\[\begin{array}{|ll|cc|cc|c|} \hline 
\bmu& & \bmu \leadsto \gamma^u_{k,l} & \bmu\rel \gamma^u_{k,l}& \alpha_{k-1} \leadsto \bmu & \alpha_{k-1}\rel \bmu & 
\raisebox{0pt}[13pt][0pt]{$\bmu \rel \blam / \widetilde{\blam} \rel \bmu$}
\\ \hline
\alpha^u_{k} &&\checkmark&\checkmark& \checkmark & \checkmark & \checkmark \\
\gamma^u_{k-1,w} & : j_w<i_{k-1} &\times && \checkmark & \times & \times \\
\gamma^u_{k-1,l} & &  \checkmark&\checkmark&\checkmark&\checkmark & \checkmark \\ 
\gamma^u_{k,w} & : l<w &\checkmark&\times &\times&& \times \\ \hline
\end{array}\]
\item Suppose that $i_{k-1}\pre j_l\pre j_{l+1} \pre i_k$.
Then $\widetilde{\blam}=\beta_{l}$.  From Lemma~\ref{URelns1} the only partitions that can have the property that $\widetilde{\blam} \gdom \bmu \gdom \blam$ 
belong to the set
\[\{\alpha^u_{k}\} \cup \{\beta^u_{w}\mid j_l < j_w <i_k\} \cup \{\gamma_{k,w}\mid l<w\}.
\]
\[\begin{array}{|ll|cc|cc|c|} \hline 
\bmu& & \bmu \leadsto \gamma^u_{k,l} & \bmu\rel \gamma^u_{k,l}& \beta_l \leadsto \bmu & \beta_l\rel \bmu &
\raisebox{0pt}[13pt][0pt]{$\bmu \rel \blam / \widetilde{\blam} \rel \bmu$}
\\ \hline
\alpha^u_{k} &&\checkmark&\times& \checkmark & \times & \times \\
\beta_{l+1} &&\times && \checkmark & \checkmark & \checkmark \\
\beta_{w} & :l+1<w & \times & &\checkmark &\times & \times \\ 
\gamma_{k,l+1} && \checkmark & \checkmark & \times && \checkmark \\
\gamma_{k,w} &: l+1<w &\checkmark &\times&\times & & \times \\ \hline   
\end{array}\]

\item Suppose that $i_{k-1}\pre j_l\pre i_k \pre j_{l+1}$.
Then $\widetilde{\blam}=\beta_{l}$.  From Lemma~\ref{URelns1} the only partitions that can have the property that $\widetilde{\blam} \gdom \bmu \gdom \blam$ 
belong to the set
\[\{\alpha^u_{k}\}\cup \{\gamma_{k,w}\mid l<w\}.
\]
\[\begin{array}{|ll|cc|cc|c|} \hline 
\bmu& & \bmu \leadsto \gamma^u_{k,l} & \bmu\rel \gamma^u_{k,l}& \beta_l \leadsto \bmu & \beta_l \rel \bmu & 
\raisebox{0pt}[13pt][0pt]{$\bmu \rel \blam / \widetilde{\blam} \rel \bmu$}
\\ \hline
\alpha^u_{k} &&\checkmark&\checkmark& \checkmark & \checkmark & \checkmark \\
\gamma_{k,w} &: l<w &\checkmark &\times &\times & & \times \\ \hline   
\end{array}\]
\end{itemize}
\end{enumerate}
This concludes the proof of Theorem~\ref{MT2} when $u=2$ and $u=3$.
\end{proof}


\begin{thebibliography}{10}
\bibitem{ArikiBook}
{\sc S.~Ariki}, {\em Representations of quantum algebras and combinatorics of Young tableaux}, Univ. Lecture Notes, {\bf 26}, Amer. Math. Soc., 2002.

\bibitem{Ariki}
{\sc S.~Ariki}, {\em On the decomposition numbers of the {Hecke} algebra of
  {$G(m,1,n)$}}, J.~Math. Kyoto Univ., {\bf 36} (1996), 789--808.

\bibitem{Ariki:Kleshchevs}, 
\leavevmode\vrule height 2pt depth -1.6pt width 23pt, 
{\em On the classification of simple modules for cyclotomic Hecke algebras of type $G(m, 1, n)$ and Kleshchev multipartitions}, Osaka J. Math., {\bf 38} (2001), 827–837.

\bibitem{ArikiKoike}
{\sc S.~Ariki and K.~Koike}, {\em A Hecke algebra of $(\Z/rZ)\wr \sym_n$ and construction of its irreducible representations}, Adv. Math., {\bf 106} (1994), 216--243.

\bibitem{BroueMalle}
{\sc M.~Brou\'e and G.~Malle}, {\em Zyklotomische Heckealgebren}, Asterisque, {\bf 212} (1993), 119--189. 

\bibitem{BK:Blocks}
{\sc J.~Brundan and A.~Kleshchev}, {\em Blocks of cyclotomic {H}ecke algebras
  and {K}hovanov-{L}auda algebras}, Invent. Math., {\bf 178} (2009), 451--484.

\bibitem{BK:GradedDecomp}
\leavevmode\vrule height 2pt depth -1.6pt width 23pt, {\em Graded decomposition
  numbers for cyclotomic {H}ecke algebras}, Adv. Math., {\bf 222} (2009),
  1883--1942.

\bibitem{BKW:GradedSpecht}
{\sc J.~Brundan, A.~Kleshchev and W.~Wang}, {\em {Graded Specht modules}}, J.
  Reine Angew. Math., {\bf 655} (2011), 61--87.

\bibitem{BrundanStroppel}
{\sc J.~Brundan and C.~Stroppel}, {Highest weight categories arising from Khovanov's diagram algebra III: category $\mathcal{O}$}, Represent. Theory {\bf 15} (2011), 170--243.  

\bibitem{Cherednik}
{\sc I.~Cherednik}, {\em A new interpretation of Gelfand-Tzetlin bases}, Duke Math. J. {\bf 54} (1987), 563--577.

\bibitem{DM:Morita}
{\sc R.~Dipper and A.~Mathas},
{\em Morita equivalences of Ariki-Koike algebras}, Math. Z., {\bf 240} (2002), 579--610.  

\bibitem{DJM:CellularBasis}
{\sc R.~Dipper, G.~James and A.~Mathas}, {\em Cyclotomic $q$--{Schur}
  algebras}, Math.~Z., {\bf 229} (1999), 385--416.

\bibitem{Evseev}
{\sc A.~Evseev}, {\em On graded decomposition numbers for cyclotomic Hecke algebras in quantum characteristic 2}, Bull. Lond. Math. Soc., {\bf 46} (2014), 725--731.  

\bibitem{Fayers:Two}
{\sc M.~Fayers}, {\em Weight two blocks of Iwahori–Hecke algebras in characteristic two}, Math. Proc. Cambridge Philos. Soc., {\bf 139} (2005), 385–397. 

\bibitem{Fayers:Weights}
\leavevmode\vrule height 2pt depth -1.6pt width 23pt, 
{\em Weights of multipartitions and representations of Ariki-Koike algebras}, Adv. Math., {\bf 206} (2006), 112--133.  \\
An updated version of this paper is available from http://www.maths.qmul.ac.uk/~mf/.

\bibitem{Fayers:WeightTwo}
\leavevmode\vrule height 2pt depth -1.6pt width 23pt, 
{\em Weight two blocks of Iwahori-Hecke algebras of type $B$}, J.~Algebra, {\bf 301} (2006), 154--201.

\bibitem{Fayers:James4}
\leavevmode\vrule height 2pt depth -1.6pt width 23pt, 
{\em James's Conjecture holds for weight four blocks of Iwahori-Hecke algebras}, J.~Algebra {\bf 317} (2007), 593--633.  

\bibitem{Fayers:Cores}
\leavevmode\vrule height 2pt depth -1.6pt width 23pt, 
{\em Core blocks of Ariki-Koike algebras}, J.~Alg. Comb., {\bf 26} (2007), 47--81.

\bibitem{Fayers:WeightII}
\leavevmode\vrule height 2pt depth -1.6pt width 23pt, 
{\em Weights of multipartitions and representations of Ariki–Koike algebras II: canonical bases}, J.~Algebra {\bf 319} (2008), 2963--2978.  

\bibitem{Fayers:James3}
\leavevmode\vrule height 2pt depth -1.6pt width 23pt, 
{\em Decomposition numbers for weight three blocks of symmetric groups and Iwahori-Hecke algebras}, Trans. Amer. Math. Soc., {\bf 360} (2008), 1341--1376. 

\bibitem{GL}
{\sc J. Graham and G.~I. Lehrer}, {\em Cellular algebras}, Invent. Math.,
  {\bf 123} (1996), 1--34.

\bibitem{Grojnowski}
{\sc I.~Grojnowki}, {\em Affine $\widehat{\mathfrak{sl}}_p$ controls the representation theory of the symmetric group and related algebras}, preprint. 
\newblock arXiv:9907129.

\bibitem{HuMathas:GradedCellular}
{\sc J.~Hu and A.~Mathas}, {\em {Graded cellular bases for the cyclotomic
{Khovanov-Lauda-Rouquier} algebras of type~$A$}}, Adv. Math., {\bf 225} (2010),
598--642.

\bibitem{HuMathas:QuiverSchurI}
\leavevmode\vrule height 2pt depth -1.6pt width 23pt, 
{\em Quiver Schur algebras for linear quivers}, Proc. Lond. Math. Soc., (3) {\bf 110} (2015), 1315--1386.  

\bibitem{James:Ten}
{\sc G.~James}, {\em The decomposition matrices of $\text{GL}_n(q)$ for $n \leq 10$}, Proc. Lond. Math. Soc (3), {\bf 60} 225--265.  

\bibitem{Kac}
{\sc V.~G. Kac}, {\em Infinite dimensional Lie algebras}, CUP, Cambridge,
  third~ed., 1994.

\bibitem{KhovLaud:diagI}
{\sc M.~Khovanov and A.~D. Lauda}, {\em A diagrammatic approach to categorification of quantum groups {I}}, Represent. Theory, {\bf 13} (2009), 309--347.

\bibitem{KhovLaud:diagII}
\leavevmode\vrule height 2pt depth -1.6pt width 23pt, {\em A diagrammatic approach to categorification of quantum groups {II}}, 
Trans. Amer. Math. Soc., {\bf 363} (2011), 2685--2700.

\bibitem{Kleshchev:Survey}
{\sc A.~Kleshchev}, {\em Representation theory of symmetric groups and related Hecke algebras}, Bull. Amer. Math. Soc., {\bf 47} (2010), 419--481.  

\bibitem{KMR:UniversalSpecht}
{\sc A.~Kleshchev, A.~Mathas and A.~Ram}, {\em Universal graded {S}pecht modules for cyclotomic {H}ecke algebras}, Proc. Lond. Math. Soc., (3) {\bf 105} (2012), 1245--1289.

\bibitem{LLT}
{\sc A.~Lascoux, B.~Leclerc and J-Y.~Thibon}, {\em Hecke algebras at roots of unity and crystal bases of quantum affine algebras}, Comm. Math. Phys., {\bf 181} (1996), 205--263.   

\bibitem{LeclercMiyachi}
{\sc B.~Leclerc and H.~Miyachi}, {\em Constructible characters and canonical bases}, J. Algebra {\bf 277} (2004), no. 1, 298–317.

\bibitem{LM:Blocks}
{\sc S.~Lyle and A.~Mathas}, {\em Blocks of cyclotomic Hecke algebras}, Adv. Math., {\bf 216} (2007), 854--878.   

\bibitem{M:Ulect}
{\sc A.~Mathas}, {\em {Hecke algebras and Schur algebras of the symmetric
  group}}, Univ. Lecture Notes, {\bf 15}, Amer. Math. Soc., 1999.

\bibitem{Mullineux}
{\sc G.~Mullineux},  
{\em Bijections  on $p$-regular  partitions  and $p$-modular  irreducibles  of  the  symmetric groups}, J. London Math. Soc. (2) {\bf 20} (1979), 60-–6.

\bibitem{NastasescuOystaeyen}
{\sc C.~N\v{a}st\v{a}sescu and F.~Van Oystaeyen}, {\em Methods of graded rings}, Lecture Notes in Math 1836, Springer, 2004.

\bibitem{Richards}
{\sc M.~Richards}, {\em Some decomposition numbers for Hecke algebras of general linear groups}, Proc. Camb. Phil. Soc., {\bf 119}, (1996), 383--402.

\bibitem{Rouquier}
{\sc R.~Rouquier}, {\em {2-Kac-Moody algebras}}, preprint 2008.
\newblock arXiv:0812.5023.

\bibitem{Williamson}
{\sc G.~Williamson}, {\em {On an analogue of the James conjecture}}, Represent. Theory, {\bf 18}, (2014) 15--27.  

\bibitem{Williamson2}
\leavevmode\vrule height 2pt depth -1.6pt width 23pt, 
{\em Schubert calculus and torsion}, preprint 2013.  
\newblock arXiv: 1309.5055.

\end{thebibliography}
\end{document}